\newtheorem*{conj*}{Conjecture}
\newtheorem{theorem}{Theorem}[section]
\theoremstyle{definition}
\newtheorem*{remark}{Remark}
\theoremstyle{plain}
\newtheorem{cor}[theorem]{Corollary}
\newtheorem{lemma}[theorem]{Lemma}
\newtheorem{prop}[theorem]{Proposition}
\newtheorem{definition}[theorem]{Definition}
\newcommand{\Z}{\mathbb{Z}}
\newcommand{\Q}{\mathbb{Q}}
\newcommand{\R}{\mathbb{R}}
\renewcommand{\pmod}[1]{\,\,({\rm mod}\,\,{#1})}
\numberwithin{equation}{section}
\begin{document}

\onehalfspacing

\title{Quantum Modular Forms and Resurgence}
\author{Eleanor McSpirit}
\address{Department of Mathematics, Vanderbilt University, Nashville, TN 37240}
\email{eleanor.mcspirit@vanderbilt.edu}
\author{Larry Rolen}
\email{larry.rolen@vanderbilt.edu}

\keywords{}
\subjclass[2020]{}

\begin{abstract} 

In 2010, Zagier described a new phenomenon which he called quantum modularity. This connected various examples coming from disparate fields which exhibit near-modular behavior.
In the fifteen years since, Zagier's philosophy has informed new developments in areas such as knot theory, 3-dimensional topology, combinatorics, and physics. More recently, the concept of holomorphic quantum modularity has emerged, pointing to a clearer structure for Zagier's original examples. These new developments suggest connections to perturbative quantum field theory, like the theory of resurgence. In 2024, Fantini and Rella proposed a means of 
codifying some of these connections under their program of ``modular resurgence." 

Inspired by their work, we unify all of the  examples of quantum modular forms in Zagier's original paper under the umbrella of resurgence. In doing so, we strengthen known quantum modularity results for holomorphic Eichler integrals of half-integer weight modular forms. Our main addition to the literature is a collection of median resummation type results which show that the examples of holomorphic quantum modular forms we consider can be recovered from their asymptotics, or the asymptotics of closely related functions. 
\end{abstract}

\

\maketitle

\section{Introduction and statement of results}

In a seminal 2010 paper \cite{zagier-quantum}, Zagier defined a new type of modular object which he called a quantum modular form. This new notion was meant to capture the behavior of a collection of examples from the literature. As Zagier remarked, these objects are reminiscent of calculations in perturbative quantum field theory and quantum topology. In the past few years, quantum modular forms have
appeared prominently in connection to exciting new invariants in low-dimensional topology and knot theory (e.g. \cite{gppv,g-z-24,ajk}).
In this paper, we utilize tools popularized by quantum field theory in order to unify the examples from Zagier's original paper and subsequent works under a more robust framework. 

Roughly speaking, a quantum modular form is a function which is not modular, but has an obstruction to modularity which is ``nice." In Zagier's original definition, these are not functions on the upper half-plane $\mathbb{H}$, but are rather defined on $\mathbb{P}^1(\mathbb{Q}) = \mathbb{Q} \cup \{i\infty\}$. In a typical example, the original function has no continuous extension to $\mathbb{P}^1(\mathbb{R})$, but the error to modularity extends to, for example, a $C^j$, $C^\infty$, or real-analytic function on $\mathbb{P}^1(\mathbb{R})$. However, the most mysterious example in Zagier's original paper, called $\mathbf{J}(x)$ and defined using the colored Jones polynomials of the figure-eight knot, had an obstruction to modularity which did not even extend to a continuous function. However, he included intriguing graphs of this obstruction that were suggestive of some underlying quantum modular structure.  

Over the last few years, the notion of holomorphic quantum modularity introduced by Garoufalidis and Zagier \cite{g-z-23, g-z-24} has given a new perspective on $\mathbf{J}(x)$. It turns out that Zagier's original function naturally embeds as one component of a matrix-valued function. When considered as a matrix, the corresponding error to modularity is $C^\infty$, clarifying the behavior of the original function's obstruction. 
A second development in their work was the full consideration of quantum modular forms as functions on $\mathbb{C}\backslash \mathbb{R}$ instead of $\mathbb{Q}$. In Zagier's original paper, he introduced ``strong" quantum modular forms, which to each rational number assigned a full formal power series instead of just one complex number, and noted that in each case these functions also admitted a full extension to $(\mathbb{C}\backslash \mathbb{R}) \cup \mathbb{Q}$.  This extension of the original definition goes hand in hand with the connection of resurgence to their work. 
 
Resurgence is a theory built on Borel resummation, a procedure for summing divergent series which has found a prominent foothold in quantum field theory. This was naturally connected to $\mathbf{J}(x)$ due to the deep relationship between Jones polynomials and Chern--Simons theory, first realized in \cite{witten}. This connection has only deepened as resurgence and modularity have informed the study of quantum invariants of knots and 3-manifolds, see e.g. \cite{garoufalidis, 3d-modularity, gukov-marino-putrov, andersen-mistegard}.

This connection to resurgence naturally led to the consideration of functions defined beyond $\mathbb{Q}$ which recovered the original quantum modular form through asymptotics. This new breakthrough and similar examples inspired the introduction of matrix-valued holomorphic quantum modular forms. In keeping with modern definitions such as in \cite{zagiertalk}, here we offer a definition in the scalar-valued case which will suit our present purposes. We define the slash operator for the weights considered in this paper in Equation \ref{eqn: slash}.
\begin{definition}
	We say that a holomorphic function $g \colon \mathbb{H} \to \mathbb{C}$ is a holomorphic quantum modular form of weight $\kappa \in \mathbb{C}$ for $\Gamma_0(N)$ if for all $\gamma =\begin{psmallmatrix} a & b \\ c & d \end{psmallmatrix} \in \Gamma_0(N)$, we have that 
	\[h_\gamma(z):= g(z) - g|_{\kappa }\gamma(z) 
	\]
	extends to an analytic function on the cut plane $\displaystyle{\mathbb{C}_\gamma := \{ z \in \mathbb{C} : cz+d \in \mathbb{C}\backslash\mathbb{R}_{\leq 0}\}.}$
\end{definition}

While these developments gave a richer understanding of the quantum modularity of $\mathbf{J}(x)$, they did not make clear how the rest of the examples in Zagier's original paper might relate to both holomorphic quantum modularity and resurgence.

Recently, Fantini and Rella made progress on this front. In \cite{fantini-rella}, the authors offered a number of conjectures supported by examples about how resurgence can generate holomorphic quantum modular forms given the extra data of an $L$-function with a functional equation. To describe this phenomenon, they coined the term ``modular resurgence."

We see their work as the start of a more general program to make the connection between holomorphic quantum modularity and resurgence explicit. 
In this paper, we continue to investigate this connection, unifying the examples in Zagier's original paper under one umbrella. In particular, we show that well-known families of quantum modular forms can be recovered by resurgence, and that resurgence naturally gives rise to quantum modularity. 

\subsection{Statement of Main Results} Before we give the general statement of Theorem \ref{thm: Main}, we highlight corollaries in some key special cases which correspond to families of quantum modular forms generalizing Zagier's original examples. 
We note for the reader that the definitions of various terms from the theory of resurgence are deferred until Section 2.1 for ease of exposition.

The first example we consider originally appeared in work of Lawrence and Zagier \cite{lawrence-zagier} and was subsequently named Example 4 in \cite{zagier-quantum}. This example was born out of consideration of the Witten--Reshetikhin--Turaev (WRT) invariants of 3-manifolds. Once a manifold has been fixed, this collection of invariants define a function from roots of unity to $\mathbb{C}$.  In the case where the 3-manifold is the Poincar\'e homology sphere, Lawrence and Zagier
showed that this function, after suitable rescaling, is a quantum modular form. 

Moreover, they proved that there exists a $q$-series converging for $|q|<1$ which recovers (up to the same rescaling) the associated WRT invariants by taking the limiting values as $q$ tend to roots of unity radially. This function $\widetilde{\Theta}_+(q)$ has coefficients given by
\[\chi_+(n) := \begin{cases}
 	(-1)^{\lfloor n/30\rfloor} & \text{if } (n, 6) = 1 \text{ and } n \equiv \pm 1 \pmod{5},\\
 	0 & \text{otherwise,}
 \end{cases}
\]
so that
\[\widetilde{\Theta}_+(q) = \sum_{n=1}^\infty \chi_+(n)q^{n^2/120}.
\]
This $q$-series bears a strong resemblance to the weight $3/2$ modular form
\[\Theta_+(z) := \sum_{n=1}^\infty n\chi_+(n)q^{n^2/120}, \quad \quad (q = e^{2\pi i z}, z \in \mathbb{H}).
\]
One can view $\Theta_+(z)$ as essentially a ``half $q$-derivative" of $\widetilde{\Theta}_+$. 
This relationship was made precise by extending the notion of an Eichler integral (see Section \ref{subsec: modular} or\cite{eichler, shimura, manin} for more details) of a cusp form to the half-integer weight case.

In \cite{zagier-quantum}, other Eichler integrals of cusp forms were given as Examples 2 and 3 of quantum modular forms on $\mathbb{Q}$, illustrating a more general phenomenon.  
In \cite{bringmann-rolen}, Bringmann and the second author proved that this procedure produces quantum modular forms when one starts with a half-integral weight cusp form. 

Extending work of Bringmann and the second author which established the quantum modularity of Eichler integrals, we use techniques from resurgence to prove that these Eichler integrals are in fact holomorphic quantum modular forms. The definition of an Eichler integral is given in Section \ref{subsec: modular}, and here $\chi_{-4}(n) := \left( \frac{-4}{n}\right)$ denotes the Kronecker symbol. 

\begin{cor}\label{cor: eichler}
Let $\tilde{f}(z)$ be the Eichler integral of a cusp form of weight $k \in \frac{1}{2} + \mathbb{N}$ for $\Gamma_0(N)$, where $4 \mid N$. For each $\gamma =\begin{psmallmatrix} a & b \\ c & d \end{psmallmatrix} \in \Gamma_0(N)$, define
\begin{align*}
	\tilde{h}_\gamma(z):= \tilde{f}(z)  -  \chi_{-4}(d)\tilde{f}|_{2-k}\gamma(z). 
\end{align*}
	Then the following are true.
\begin{enumerate}
\item The function $\tilde{h}_\gamma(z)$ has an analytic continuation to $\mathbb{C}_\gamma$. 
\item We have the integral representation \[\displaystyle{\tilde{h}_\gamma(z) = \frac{(-2\pi i)^{k-1}}{\Gamma(k-1)} \int_{-d/c}^{i\infty} f(\tau)\left(\tau-z \right)^{k-2}\;d\tau},\] where both sides are defined, and the path of integration avoids the branch cut. 
\end{enumerate}
\end{cor}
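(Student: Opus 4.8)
The plan is to prove both statements at once by establishing the integral representation in (2) directly, from which the analytic continuation in (1) is read off. The starting point is the period-integral form of the Eichler integral itself: for $z \in \HH$ one has
\[
\tilde{f}(z) = \frac{(-2\pi i)^{k-1}}{\Gamma(k-1)}\int_z^{i\infty} f(\tau)(\tau-z)^{k-2}\,d\tau,
\]
where the path is the vertical ray from $z$, so that $\tau-z$ stays on the positive imaginary axis and $(\tau-z)^{k-2}$ is taken with its principal branch. First I would verify that this agrees with the $q$-series defining $\tilde{f}$ by substituting the Fourier expansion of $f$ and evaluating the resulting Gamma integral term by term; the normalizing constant $(-2\pi i)^{k-1}/\Gamma(k-1)$ is precisely what makes the $n$-th term collapse to $a(n)n^{1-k}q^n$.

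The heart of the argument is to compute $\chi_{-4}(d)\,\tilde{f}|_{2-k}\gamma(z)$ as a period integral. I would insert the integral representation into $\tilde{f}(\gamma z)$ and perform the change of variables $\tau = \gamma w$. Using $d\tau = (cw+d)^{-2}\,dw$, the cocycle identity $\gamma w - \gamma z = (w-z)(cw+d)^{-1}(cz+d)^{-1}$, and the transformation law $f(\gamma w) = \nu(\gamma)(cw+d)^k f(w)$ of the weight-$k$ cusp form, all powers of $(cw+d)$ cancel and a single factor $(cz+d)^{2-k}$ is produced. Applying the weight-$(2-k)$ slash then cancels this last automorphy factor, and the endpoint $\tau = i\infty$ becomes $w = \gamma^{-1}(i\infty) = -d/c$, leaving
\[
\chi_{-4}(d)\,\tilde{f}|_{2-k}\gamma(z) = \frac{(-2\pi i)^{k-1}}{\Gamma(k-1)}\int_z^{-d/c} f(w)(w-z)^{k-2}\,dw.
\]
Subtracting this from $\tilde{f}(z)$ and splicing the two paths at their common base point $z$ (legitimate by path independence for the holomorphic integrand on $\HH$ away from the branch cut) collapses the difference to a single integral from $-d/c$ to $i\infty$, which is exactly the formula in (2). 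Statement (1) is then immediate: since $f$ is a cusp form it decays exponentially both at $i\infty$ and, by the transformation law, at the rational cusp $-d/c$, so the integral converges and defines a holomorphic function of $z$ on any region where a single branch of $(w-z)^{k-2}$ can be chosen along the whole path, and this region is exactly $\C_\gamma$.

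I expect the main obstacle to be the careful bookkeeping of branches and multipliers in the half-integral weight setting. Two points need genuine care. First, for non-integer $k$ the factors $(cw+d)^k$, $(cz+d)^{2-k}$, and $(w-z)^{k-2}$ are multivalued, so the cancellation of automorphy factors above is only valid once compatible branches have been fixed; pinning these down is what forces the domain to be the cut plane $\C_\gamma$ rather than all of $\C\setminus\R$, and it is here that the hypothesis $cz+d \in \C\setminus\R_{\leq 0}$ enters. Second, the Kronecker symbol $\chi_{-4}(d)$ must be shown to exactly absorb the product of the weight-$k$ theta-multiplier and the weight-$(2-k)$ multiplier, i.e. that $\chi_{-4}(d)$ times this product equals $1$; the crux is the elementary identity $\epsilon_d^2 = \chi_{-4}(d)$ relating the half-integral weight normalizing factor $\epsilon_d$ to the Kronecker symbol, which is what lets $\chi_{-4}(d)$ reconcile the two theta-multipliers and is the reason the twisting factor appears in the definition of $\tilde{h}_\gamma$. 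A final routine check is the convergence at $-d/c$, which follows from the cuspidality of $f$ together with the boundedness of $(w-z)^{k-2}$ away from $z = -d/c$.
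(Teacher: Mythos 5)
Your skeleton (integral representation of $\tilde{f}$, change of variables $\tau = \gamma w$, multiplier bookkeeping, path splicing) is the natural classical route and is genuinely different from the paper's proof, and parts of it are sound: the representation $\tilde{f}(z) = \frac{(-2\pi i)^{k-1}}{\Gamma(k-1)}\int_z^{i\infty}f(\tau)(\tau-z)^{k-2}\,d\tau$ does hold for $k \in \frac{1}{2}+\mathbb{N}$ (term-by-term integration converges precisely because $k>1$), and your multiplier accounting is correct, since $\varepsilon_d^2 = \chi_{-4}(d)$ and $\varepsilon_d^{6-4k}=1$ for half-integral $k$, so the $\varepsilon$-factors and the powers of $(cz+d)$ cancel as you say. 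The genuine gap is the step you dispose of in a parenthesis: ``splicing the two paths at their common base point $z$ \dots legitimate by path independence.'' For half-integer $k$ the integrand $f(w)(w-z)^{k-2}$ is multivalued with its branch point exactly at the junction $w=z$, so path independence is not available there; worse, the two integrals you splice do not carry the same branch at $z$. Doing the substitution honestly, the image of the vertical ray is a circular arc from $z$ to $-d/c$ whose tangent at $z$ is rotated by $(\gamma^{-1})'(\gamma z)=(cz+d)^2$, and the identity $(\gamma w - \gamma z)^{k-2} = (w-z)^{k-2}(cw+d)^{2-k}(cz+d)^{2-k}$ holds only for the continuous branch of $(w-z)^{k-2}$ whose argument tends to $\tfrac{\pi}{2}+2\arg(cz+d)$ as $w \to z$ along the arc. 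This leaves the principal range as soon as $\arg(cz+d)>\pi/4$; if you read your displayed formula with pointwise principal branches, it is off by a sign on part of the path (the correction $(-1)^{m(w)}$ jumps whenever $w-z$ crosses $\mathbb{R}_{<0}$), and the resulting identity is simply false for some $(z,\gamma)$.

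Consequently the real content of the corollary is concentrated in the splice: one must excise a small disk about $z$, join the incoming arc (branch argument $\tfrac{\pi}{2}+2\arg(cz+d)$) to the outgoing ray (argument $\tfrac{\pi}{2}$) by a circular arc traversed through the angle $2\arg(cz+d)$ in the correct rotational sense, check that this corner contributes nothing in the limit (again using $k>1$), and record on which side of $z$ the resulting path from $-d/c$ to $i\infty$ passes and with which branch --- this is exactly what the phrase ``the path of integration avoids the branch cut'' is encoding, and different routings around $z$ change the value by a nonzero monodromy integral, not by a harmless constant. Statement (1) also does not come for free: continuation to $\mathbb{C}_\gamma$ requires deforming the contour (the analogue of rotating the Laplace direction), not merely ``choosing a branch along the whole path.'' This difficulty is why the classical literature (Lawrence--Zagier, Bringmann--Rolen) argued through the non-holomorphic companion $f^*$ rather than directly, and why the paper takes a different route entirely: it proves $\tilde{f} = \mathcal{S}^{\pi/2}_{med}(\varphi_{-d/c})$ by the Gaussian-integration trick, identifies the Stokes discontinuity at angle $\pi/2$ with $2\chi_{-4}(d)\tilde{f}|_{2-k}\gamma$, so that $\tilde{h}_\gamma = \mathcal{S}^0(\varphi_{-d/c})$ continues to $\mathbb{C}_\gamma$ automatically, and only then computes $\mathcal{L}^0_1\mathcal{B}_1(\varphi_{-d/c})$ to be your period integral. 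Your route may well be completable, but the branch-and-corner analysis you defer as ``careful bookkeeping'' is the theorem, not a routine check.
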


We note that this corollary ties the half-integral weight case more directly to the integral weight case; for an Eichler integral of an integral weight cusp form, the expression in \emph{(2)} is the corresponding period polynomial. We show this relationship explicitly in Section \ref{sec: corollaries}. 

We also remark that related results were previously attained for specific families of Eichler integrals of weight $1/2$ in \cite{gukov-marino-putrov, andersen-mistegard, 3d-modularity, hanetal, bringmann-nazaroglu}.

Looking at another family of quantum modular forms, we recall Example 1 from \cite{zagier-quantum}. The starting point is the function $\sigma$ from Ramanujan's Lost Notebook:
\[\sigma(q) := \sum_{n=0}^\infty \frac{q^{n(n+1)/2}}{(1+q)(1+q^2)\cdots(1+q^n)}, \quad \quad (|q|<1). 
\]
This function has a natural ``companion" $\sigma^*$ defined instead for $|q|>1$. 

The function $\sigma(q)$ garnered considerable interest following several conjectures of Andrews \cite{andrews}. Notably, work of Andrews, Dyson, and Hickerson \cite{andrews-dyson-hickerson} expressed $\sigma$ and $\sigma^*$ as indefinite theta series, and work of Cohen \cite{cohen} realized coefficients of $\sigma$ and $\sigma^*$ as the Dirichlet coefficients of an Artin $L$-function. 
Moreover, authors of both papers established that $\sigma$ has a $q$-series expansion that converges not only for $|q|<1$, but is finite whenever $q$ is a root of unity, leading to a natural definition of $\sigma(e^{2\pi i x})$ for $x \in \mathbb{Q}$. With the addition of a phase factor, we define 
 \[f(x) := q^{1/24} \sigma(q), \quad \quad(q = e^{2\pi i x}, x\in \mathbb{Q}).\]
It then turns out that $f$ is a quantum modular form on $\mathbb{Q}$ for $\Gamma_0(2)$. 

Work of Lewis and Zagier offered a framework which generalized many of the interesting properties of $\sigma$. In \cite{lewis-zagier}, they showed a correspondence between Maass waveforms $u$, pairs of $L$-functions $(L_0,L_1)$, holomorphic periodic functions $f$, and period functions $\psi$. For the precise definitions and basic results on Maass waveforms are given in Section \ref{subsec: maass}. This period function appears as the obstruction to modularity of $f$ under $z \mapsto -1/z$ and extends to the cut plane $\mathbb{C}':=\mathbb{C}\backslash (-\infty,0]$, upgrading the first observation of quantum modularity of $\sigma$ to a full holomorphic quantum modularity result. We use resurgence to give a new proof of this.

\begin{cor}
Let $f(z)$ be the periodic function of a Maass waveform $u(z)$ with spectral parameter $s= \frac{1}{2}$ for $\mathrm{SL}_2(\mathbb{Z})$. Define
\begin{align*}
	\psi(z):= f(z)  -  z^{-1}f(-1/z). 
\end{align*}
	Then the function $\psi$ has an analytic continuation to $\mathbb{C}'$. 
\end{cor}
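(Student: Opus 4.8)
The plan is to realize $\psi$ as the Borel--Laplace (median) resummation of the asymptotic expansion of $f$ near the cusp and to read off the holomorphic continuation to $\mathbb{C}'$ from the singularity structure of the associated Borel transform. This is the mechanism behind the general theorem, specialized to the L-data $(L_0,L_1)$ attached to $u$ by the Lewis--Zagier correspondence. First I would restrict to the imaginary axis and extract the perturbative series. Since the spectral parameter is the self-dual point $s=\tfrac12$, the periodic function is $f(z)=\sum_{n\ge 1}a_n e^{2\pi i n z}$, where the $a_n$ are the Dirichlet coefficients shared by $L_0$ and $L_1$. Writing the completed transform $\Lambda(w)=(2\pi)^{-w}\Gamma(w)L(w)=\int_0^\infty f(it)\,t^{w-1}\,dt$ and shifting the contour to the left, the fact that $L$ is entire (as $u$ is cuspidal) forces the small-$t$ expansion $f(it)\sim\sum_{k\ge 0}c_k t^k$ to be governed solely by the poles of $\Gamma(w)$, so that $c_k=\tfrac{(-2\pi)^k}{k!}L(-k)$.

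The next step is to invoke the functional equation of the Maass L-functions, which converts each $L(-k)$ into $L(1+k)$ times an explicit ratio of $\Gamma$-factors. This simultaneously exhibits the $c_k$ as Gevrey-$1$ (factorially divergent) coefficients, which is precisely the regime where resurgence applies, and lets me resum the Borel transform $\widehat\phi(\zeta)=\sum_{k\ge 0}\tfrac{c_k}{k!}\zeta^k$ in closed form. Carrying out the functional equation term-by-term and interchanging the sums, $\widehat\phi$ becomes an explicit expression of the shape $\sum_n \tfrac{a_n/n}{1-\mathrm{const}\cdot\zeta/n}$, whose only singularities are simple poles on a single ray. The Laplace integral $\mathcal{S}\phi(z)=\tfrac1z\int_0^\infty e^{-\zeta/z}\,\widehat\phi(\zeta)\,d\zeta$, taken along a laterally rotated contour avoiding this singular ray, then converges and is holomorphic exactly for $z$ in the cut plane $\mathbb{C}'$, which is the domain $\mathbb{C}_\gamma$ for the generator $z\mapsto -1/z$ since there $cz+d=z$.

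It then remains to identify this resummation with $\psi$. On the imaginary axis the $S$-transformed piece $z^{-1}f(-1/z)$ is exponentially small as the cusp is approached, so $\psi(it)\sim\phi(t)$ only asymptotically; I would upgrade this to an exact equality by showing that the two lateral Borel sums $\mathcal{S}_\pm\phi$ differ by precisely $z^{-1}f(-1/z)$. Concretely, the Stokes discontinuity $\mathcal{S}_+\phi-\mathcal{S}_-\phi$ is $2\pi i$ times the sum of residues of $\widehat\phi$ over its poles, and this residue sum reproduces the trans-series $\tfrac1z\sum_n a_n e^{-2\pi i n/z}$. Hence the median sum equals $\psi$, and since $\psi$ and $\mathcal{S}\phi$ are both holomorphic on $\mathbb{H}$ and agree there, the holomorphy of $\mathcal{S}\phi$ on $\mathbb{C}'$ delivers the desired continuation. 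Along the way this produces an integral representation for $\psi$ in terms of $u$ (via $\widehat\phi$), exactly parallel to part \emph{(2)} of Corollary \ref{cor: eichler}.

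The main obstacle I expect is the second step: proving that the Borel transform has its singularities confined to a single ray with controlled growth there, and that the resulting Laplace integral extends holomorphically across the whole cut plane rather than a mere sector. This requires using the precise $\Gamma$-factor content of the Maass functional equation at the self-dual point $s=\tfrac12$ (which is what pairs $L_0$ and $L_1$) both to pin down the location of the singularities and to secure convergence of the lateral integrals uniformly as $z$ ranges over $\mathbb{C}'$. Establishing that the Stokes discontinuities assemble into the single-valued function $z^{-1}f(-1/z)$ is the crux that makes $\psi$ continue to all of $\mathbb{C}'$.
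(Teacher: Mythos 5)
Your overall strategy matches the paper's: compute the asymptotic series of $f$ at $0$, use the functional equation to continue the Borel transform meromorphically with simple poles on the imaginary axis, identify the Stokes discontinuity across that axis with (twice) $z^{-1}f(-1/z)$, and continue $\psi$ to $\mathbb{C}'$ by rotating the Laplace contour through directions $\theta \in (-\pi/2,\pi/2)$. But there is a genuine gap at the step you dispatch with ``hence the median sum equals $\psi$.'' Knowing that $\psi(it) \sim \phi(t)$ and that the two lateral sums differ by the correct trans-series does \emph{not} imply that the median (or any lateral) Borel--Laplace sum equals the function you started with: an asymptotic expansion determines a function only up to exponentially small corrections, and a Watson/Nevanlinna-type uniqueness theorem is unavailable here because $f$ lives only on $\mathbb{H}$, so no uniform Gevrey-1 bound on a sector of opening greater than $\pi$ is at hand. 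The identity $f=\mathcal{S}^{\pi/2}_{med}(\varphi_0)$ (equivalently $\psi=\mathcal{S}^{0}(\varphi_0)$) is precisely what the paper's median resummation proposition proves, and it is the technical heart of the whole argument: one writes each $e^{2\pi i n z}$ as a Gaussian integral over a contour $\Gamma$, splits the resulting integrand into pieces supported on even and odd half-integer powers of $u$, identifies the even part with $\mathcal{B}_{1/2}(\varphi_0)$ (which produces the median sum), and then must show that the odd part integrates to zero. That vanishing is not formal: it uses the functional equations of the pair $(L_0,L_1)$ at the self-dual point together with a lemma permitting a change of shift in the Borel--Laplace sum, replacing $\mathcal{B}_1$ by $\mathcal{B}_{3/2}$, whose poles lie only on the \emph{negative} imaginary axis, so that the two lateral sums of the odd piece agree and its contribution cancels. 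Nothing in your proposal performs or replaces this step; you locate ``the crux'' in the assembly of the Stokes discontinuities, which in the paper is the comparatively routine residue computation.

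A second, smaller inaccuracy: the Borel transform does not have poles ``on a single ray.'' Because the Maass expansion carries coefficients $A_m$ for all $m \in \mathbb{Z}_{\neq 0}$, and the single Dirichlet series $\sum_{m>0} A_m m^{-\rho}$ satisfies no functional equation by itself (only the even/odd combinations $L_0$ and $L_1$ do), the continued Borel transform is $\frac{-1}{\pi i}\sum_{m\neq 0} A_m/(u-2\pi i m)$, with poles on both halves of the imaginary axis. This does not damage the continuation to $\mathbb{C}'$, since both Stokes rays are avoided as $\theta$ ranges over $(-\pi/2,\pi/2)$, but it does mean the term-by-term functional-equation manipulation you describe must be organized through the pair $(L_0,L_1)$, exactly as in the paper's computations; your closed form for $\widehat\phi$ and the residue bookkeeping behind your discontinuity claim would otherwise come out wrong.
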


We note that \cite{fantini-rella} also offered a proof of this result using resurgence. However, the main portion of our proof, a median resummation result using Gaussian integration, is distinct. In particular, we show that one does not need to assume any information about the period function proven in \cite{lewis-zagier}. 

We mention one last type of quantum modular form, which Zagier labelled as Example 0 in \cite{zagier-quantum}. To describe this, for real numbers $x$ consider the sawtooth function $((x))$, which takes the value $0$ for integers $x$ and is defined by $((x)):=x-\lfloor x\rfloor-1/2$ for $x\in\R\setminus\Z$. Then the classical Dedekind sum for coprime integers $c,d$ with $c>0$ is defined by 
\[
s(d,c):=\sum_{0<k<c}\left(\left(\frac{k}c\right)\right)\cdot\left(\left(\frac{kd}c\right)\right).
\]
These Dedekind sums satisfy well-known relations, in particular, reciprocity. Zagier interpreted these by defining $S\colon\Q\rightarrow\Q$ by $S(d/c):=12s(d,c)$, and then noted that they imply the quantum modular-like transformations
\[
S(x)-S(x+1)=0,\quad\quad S(x)-S(-1/x)=x+\frac1x\pm 3+\frac{1}{\mathrm{Num}(x)\mathrm{Den}(x)}.
\]
Here, the sign above is $+$ if $x<0$, and is $-$ if $x>0$. As these obstructions to modularity depend on numerators and denominators, they are not continuous functions. However, such examples found key applications in work of Bettin and Conrey \cite{bettin-conrey}. Later, in \cite{folsom-cotangent-sums} Folsom embedded these examples into an infinite family of twisted versions of Maass--Eisenstein series. Generically, she proved that they are quantum modular forms with smooth errors to modularity. However, when one specializes to the examples of Zagier, or of Bettin and Conrey, these smooth functions ``degenerate'' to discontinuous functions. Thus, even this example from Zagier's original paper fits into a rich theory of period functions of Maass--Eisenstein series. 

With some mild extra considerations specific to the non-cuspidal case, we demonstrate how resurgence connects to Maass--Eisenstein series with the following corollary. 

\begin{cor}
Let $f(z)$ be the periodic function of the real-analytic Eisenstein series $E_{1/2}(z)$, defined in Equation $\ref{eqn: maass eisenstein}$. Define
\begin{align*}
	\psi(z):= f(z)  -  z^{-1}f(-1/z). 
\end{align*}
	Then the function $\psi$ has an analytic continuation to $\mathbb{C}'$. 
	\end{cor}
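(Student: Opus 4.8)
The plan is to deduce this corollary from the general Main Theorem (Theorem \ref{thm: Main}), exactly as in the cuspidal (Maass waveform) case, after identifying the $L$-function attached to $E_{1/2}$ and accounting for the features that distinguish the non-cuspidal setting. Under the Lewis--Zagier correspondence recalled in Section \ref{subsec: maass}, the periodic function $f$ is read off from the Fourier expansion of $E_{1/2}(z)$; since $E_{1/2}$ has spectral parameter $s=\tfrac12$, its nonconstant Fourier coefficients are, up to normalization, the divisor counts $d(n)=\sigma_0(n)$. The associated Dirichlet series is therefore $L(w)=\sum_{n\ge 1} d(n)\,n^{-w}=\zeta(w)^2$, whose completion $\zeta^*(w)^2$, with $\zeta^*(w)=\pi^{-w/2}\Gamma(w/2)\zeta(w)$, is self-dual under $w\mapsto 1-w$. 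This is precisely the data---a Dirichlet series with a functional equation of the required shape---that feeds into the Main Theorem.

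The concrete steps are as follows. First I would write down the asymptotic expansion of $f(z)$ as $z\to 0$ along the imaginary axis; its coefficients are governed by the special values and Taylor data of $\zeta^*(w)^2$, in direct analogy with the cusp form case. Next I would form the formal power series recording this expansion and show, via the Gaussian-integration median resummation argument used for the Maass waveform corollary, that its median Borel sum reproduces $f$, so that the obstruction $\psi(z)=f(z)-z^{-1}f(-1/z)$ is expressed through a Borel--Laplace integral whose integrand inherits the analytic structure of $\zeta(w)^2$. The location of the Borel-plane singularities is then dictated by the poles of the integrand, and one reads off the analytic continuation of $\psi$ to the cut plane $\mathbb{C}'$ once these singularities are confined to $(-\infty,0]$.

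The main obstacle, and the source of the ``mild extra considerations,'' is that $E_{1/2}$ is not cuspidal: the completed $L$-function $\zeta^*(w)^2$ is not entire but has a double pole at $w=1$ (with a matching double pole at $w=0$), reflecting the degenerate constant-term contribution $y^{1/2}\log y$ in the Fourier expansion at the value $s=\tfrac12$. In the cuspidal case the completed $L$-function is entire, so the Borel transform is holomorphic at the origin and its singularities are located immediately; here the double pole produces an extra elementary summand in $f$ together with additional singularities in the Borel plane. The key technical task is therefore to split $f$ into an explicit elementary piece coming from the residue at $w=1$ and a remaining piece to which the median resummation applies unchanged, and then to verify that the elementary piece contributes to $\psi$ only a function manifestly analytic on $\mathbb{C}'$ while the extra Borel-plane singularities stay on the negative real axis. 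Once this separation is carried out, summing the two contributions yields the desired analytic continuation of $\psi$ to $\mathbb{C}'$.
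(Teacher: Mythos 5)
Your route is the paper's route: identify the underlying Dirichlet series as $\zeta(\rho)^2$ with its degree-two functional equation, extract the asymptotic expansion of $f$ at $0$ from its analytic continuation (Proposition \ref{prop: eisenstein asymptotic}), run the Gaussian-integration median resummation argument, and split off an elementary summand caused by the double pole of the completed $\zeta^2$ at $\rho=1$ --- in the paper this is exactly the extra integral in Proposition \ref{prop: eisenstein med}, and your proposed splitting of $f$ is that decomposition. However, your final step contains a genuine error, and it is not a technicality but the central mechanism. You assert that one reads off the continuation of $\psi$ to $\mathbb{C}'$ ``once these singularities are confined to $(-\infty,0]$,'' and later that ``the extra Borel-plane singularities stay on the negative real axis.'' That verification would fail: by the computation in Section \ref{subsec: asymptotics}, the Borel transform of the power-series part is
\[
\mathcal{B}_1(\varphi_0)(u) \;=\; \frac{4}{\pi i}\sum_{m\neq 0}\frac{d(|m|)}{u-2\pi i m},
\]
whose poles sit at $u=2\pi i m$, $m\in\mathbb{Z}_{\neq 0}$, i.e.\ on the \emph{imaginary} axis, not on the negative reals; and the singularity of the elementary piece's integrand $\frac{-4+2\log(8\pi i u)}{2\pi i u}$ is a branch point at $u=0$, likewise not along $(-\infty,0)$.

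Moreover, your stated criterion is incompatible with the median resummation you invoke. The poles on the ray $\arg u=\pi/2$ are precisely what produce the slash term: twice $z^{-1}f(-1/z)$ equals the Stokes discontinuity $\mathrm{disc}_1^{\pi/2}(\mathcal{B}_1(\varphi_0))$ (this is the unlabeled Type 6 proposition following Proposition \ref{prop: eichler discont}). If all Borel singularities lay in $(-\infty,0]$, the lateral sums on either side of $\pi/2$ would coincide, the discontinuity would vanish, and $f$ itself would continue analytically across $\mathbb{R}_{>0}$ --- impossible, since $\sum_{n\geq 1}d(n)q^n$ is a Lambert series with the unit circle as natural boundary. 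The correct final step is the opposite of what you wrote: one needs the \emph{open right half} of the Borel plane to be singularity-free, which it is, since the only Stokes rays are $\pm\pi/2$. Then $\psi$ equals the lateral sum $\mathcal{S}^{0}(\varphi_0)$ plus the elementary log-integral, the Laplace direction can be rotated through all $\theta\in(-\pi/2,\pi/2)$, and the union of the half-planes $\{\Re(ze^{-i\theta})>0\}$ is exactly $\mathbb{C}' = \mathbb{C}_\gamma$ for $\gamma=\begin{psmallmatrix}0&-1\\1&0\end{psmallmatrix}$, while the elementary piece is analytic on $\mathbb{C}'$ because its only Borel-plane singularity is the branch point at the origin, which the contour $\Gamma$ avoids. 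With this correction, the rest of your outline goes through and coincides with the paper's proof.
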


We remark that our methods are likely generalizable to more general spectral parameter and level for both Maass cases. 

As an additional result showing the flexibility of these methods, we reprove the quantum modularity of cusp forms and of Eichler integrals of integer weight cusp forms using a variant of the same framework. We remark that in these cases, resurgence analysis on its face will not apply; see Remark 3.2 of \cite{fantini-rella}. 
Our main result is the following:

\begin{theorem} \label{thm: Main} 
Suppose $g$ is one of the following:
\begin{enumerate}
\item a cusp form of weight $\kappa \in \mathbb{N}$ for $\Gamma =\Gamma_0(N)$;
\item a cusp form of weight $\kappa \in \frac{1}{2} + \mathbb{N}_0$ for $\Gamma =\Gamma_0(N)$, $4\mid N$;
\item the Eichler integral of a cusp form of weight $2-\kappa \in \mathbb{N}$ for $\Gamma =\Gamma_0(N)$;
\item the Eichler integral of a cusp form of weight $2-\kappa \in \frac{1}{2} + \mathbb{N}_0$ for $\Gamma =\Gamma_0(N)$, $4\mid N$;
\item the periodic function of a Maass waveform with $s=\frac{1}{2} = \kappa/2$ for $\Gamma =\text{SL}_2(\mathbb{Z})$;
\item the periodic function of $E_{\kappa/2}(z)$ with $s=\frac{1}{2}=\kappa/2$ for $\Gamma =\text{SL}_2(\mathbb{Z})$.
\end{enumerate}
Then $g$ is a holomorphic quantum modular form of weight $\kappa$ for $\Gamma$. Moreover, for each $\gamma \in \Gamma$, the modular obstruction $h_\gamma$ equals a Borel--Laplace sum of the asymptotic series of $g$ as $z$ approaches $\gamma^{-1}(i\infty)$ along a geodesic, and analytically continues to $\mathbb{C}_\gamma$.
\end{theorem}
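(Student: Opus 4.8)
The plan is to treat all six families by a single mechanism: realize the modular obstruction as a Laplace-type integral whose integrand is the relevant period integrand, and then show that the divergent asymptotic expansion of $g$ at the cusp $\gamma^{-1}(i\infty)$ is precisely the perturbative series whose Borel transform reconstitutes that integral. First I would fix $\gamma = \begin{psmallmatrix} a & b \\ c & d \end{psmallmatrix} \in \Gamma$ and change variables so that the geodesic approach $z \to \gamma^{-1}(i\infty) = -d/c$ becomes the approach to the origin in a Borel variable. In each case the starting object has an explicit expansion at the cusp: a Fourier/$q$-expansion for (1),(2), the Eichler-integral expansion built from the cusp-form coefficients $c(n)$ for (3),(4), and the Lewis--Zagier expansion of the periodic function for (5),(6). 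I would verify that the resulting coefficient sequence is Gevrey-$1$, so that the formal series lies in the domain of Borel summation. For the integer-weight cases (1),(3) this sequence is trivial or terminating (for a genuine cusp form $h_\gamma$ vanishes and the expansion is exponentially flat; for an integer-weight Eichler integral $h_\gamma$ is the period polynomial), which is the precise sense in which ``resurgence on its face does not apply'' and a degenerate variant of the argument suffices.

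Second, I would compute the Borel transform in closed form. The key observation is that, term by term, $\int_0^{i\infty} e^{2\pi i n u} u^{k-2}\,du = \Gamma(k-1)(-2\pi i n)^{1-k}$ converts the Dirichlet-type generating series of the Fourier coefficients into the Borel plane, so the Borel transform is essentially the completed Dirichlet series, or $L$-function, attached to $g$. Its analytic continuation and singularity structure are then governed by the functional equation of that $L$-function in the cusp-form and Eichler cases, and by the spectral data (the $s=\tfrac12$ Maass waveform and Eisenstein series) in cases (5),(6). Locating the singularities---arithmetic families of poles along the rays $\arg\zeta = \arg(2\pi i n)$---is the step that identifies the Stokes rays and fixes which lateral Laplace integrals must be averaged. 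This is exactly the appearance of an $L$-function with functional equation anticipated by the modular resurgence philosophy.

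Third, I would perform the median, or averaged lateral, Laplace resummation and identify the result with $h_\gamma$. Because the Borel singularities sit on the ray of integration, the naive Laplace integral is ambiguous; taking the median of the two lateral sums produces a well-defined function, which I would match against the period integral $\tfrac{(-2\pi i)^{k-1}}{\Gamma(k-1)}\int_{-d/c}^{i\infty} f(\tau)(\tau-z)^{k-2}\,d\tau$ of Corollary \ref{cor: eichler} in the cusp-form and Eichler cases, and against the Lewis--Zagier period function in the Maass cases. For (5),(6) the matching is cleanest through a Gaussian-integration representation of the median sum, which is what lets us avoid invoking any a priori properties of the period function. Deforming the Laplace contour off the branch cut $cz+d \in \R_{\le 0}$ then yields the analytic continuation of $h_\gamma$ to $\mathbb{C}_\gamma$, completing the holomorphic quantum modularity statement; for the Eisenstein case (6) I would separately track the polar contribution of the constant Fourier coefficient, which supplies the non-cuspidal correction.

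The hardest part, I expect, is this third step: proving that the median resummation reproduces exactly $h_\gamma$, rather than $h_\gamma$ plus a Stokes-type discrepancy, uniformly across the six families and uniformly in $\gamma$. This requires precise control of the Borel singularities---their location, type, and residues---together with a careful justification that the contour deformation commutes with the term-by-term identifications made in the first two steps. The half-integer-weight cases (2),(4) and the Maass cases (5),(6), where the Borel transform carries genuinely infinitely many singularities accumulating along the Stokes rays, are where this control is most delicate.
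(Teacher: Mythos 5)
There is a genuine structural gap in your third step: you have the roles of the median resummation and the lateral (one-sided) Borel--Laplace sum reversed. In the paper's argument, the median resummation along the Stokes direction $\theta=\pi/2$ does \emph{not} give the obstruction $h_\gamma$; it recovers the function $g$ itself, i.e.\ $g=\mathcal{S}^{\pi/2}_{med}(\varphi_{-d/c})$ (Propositions \ref{prop: med resum} and \ref{prop: eisenstein med}). The obstruction is instead the \emph{lateral} sum taken in a direction away from the Stokes ray: writing $\mathcal{S}^{\pi/2}_{med}(\varphi_{-d/c})=\mathcal{S}^{0}(\varphi_{-d/c})+\tfrac12\,\mathrm{disc}^{\pi/2}_{\kappa}(\mathcal{B}_\kappa(\varphi_{-d/c}))$, the paper shows (Proposition \ref{prop: eichler discont}) that half the Stokes discontinuity equals exactly $g|_{\kappa}\gamma(z)$, so that $h_\gamma=g-g|_\kappa\gamma=\mathcal{S}^{0}(\varphi_{-d/c})$, and it is this one-sided sum that is matched with the period integral (Proposition \ref{prop: obstruction}) and continued to $\mathbb{C}_\gamma$ by rotating the direction of integration. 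Your plan to match the median of the two lateral sums against the period integral would therefore fail: the median sum differs from the period integral by the exponentially small but nonzero term $g|_\kappa\gamma$ --- precisely the ``Stokes-type discrepancy'' you worry about in your final paragraph. That discrepancy is not something to be ruled out; it \emph{is} the slashed function, and identifying it as such is the heart of the proof.

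Relatedly, your proposal does not explain how any identity between a resummation and $g$ (or $h_\gamma$) would actually be established, and this is where the real work lies. The paper's mechanism is: (i) write $q^n$ as a Gaussian integral, converting $g$ into an integral of $\widehat{g}_{-d/c}(u)=\frac{1}{\sqrt{\pi}}\sum_n b_n\zeta_c^{-dn}e^{-\sqrt{8\pi i nu}}$ over a hairpin contour around the positive imaginary axis; (ii) split this Puiseux series into even and odd half-integer powers and identify the even part with $\mathcal{B}_{1/2}(\varphi_{-d/c})$, so that its contribution is the median sum; and (iii) prove that the odd part integrates to zero --- for \emph{Type 4} because the relevant $L$-values vanish, leaving a polynomial whose hairpin integral vanishes, and for \emph{Type 5} via the shift lemma (Lemma \ref{lem: borel laplace shifts}), which permits replacing $\mathcal{B}_1$ by $\mathcal{B}_{3/2}$, whose poles lie only in the lower half-plane. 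For \emph{Types 1--3} the even part is zero or a polynomial and the odd part does not vanish; instead it is identified with $g|_\kappa\gamma$ itself (Proposition \ref{prop: modular slash}), which is what makes the ``degenerate'' cases work. Your proposal correctly assembles the supporting ingredients --- Gevrey-1 bounds, pole locations governed by the $L$-function functional equation, contour rotation for the continuation to $\mathbb{C}_\gamma$ --- but the central identity around which you organize them is the wrong one, and with it the argument cannot close.
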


For the remainder of the paper, we will refer to these cases as \emph{Type 1} through \emph{Type 6}. We note that the results above for \emph{Types 1} and \emph{2} are immediate from their definitions. However, the methods developed for the remaining types in this paper can be applied to functions related to those of \emph{Types 1} and \emph{2} to give an interesting resurgence perspective on these functions which themselves have no resurgent structure.

\begin{remark} 
Many of the above functions have a natural ``companion" in the lower half-plane like the example of $\sigma$ and $\sigma^*$. Our results can equivalently be proven for the corresponding functions on $\mathbb{C}\backslash\mathbb{R}$. 
\end{remark}

We note that the quantum modularity of these functions is not new, but the quantum modularity of half-integral weight Eichler integrals has been strengthened to holomorphic quantum modularity, and the relationship of Borel resummation to their quantum modularity across all cases was previously conjectured but unproven.

Theorem~\ref{thm: Main} and its proof additionally give evidence towards some conjectures of Fantini and Rella \cite{fantini-rella}:

\begin{cor} \label{cor: conj}
The asymptotic series of Eichler integrals of half-integral weight cusp forms, periodic functions of Maass waveforms with $s=1/2$, and the periodic function of the real-analytic Eisenstein series with $s=1/2$ are modular resurgent series. Moreover, the median resummation of each of these series recovers the original function, which is a holomorphic quantum modular form. 
\end{cor}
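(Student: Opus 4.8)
The plan is to deduce this from Theorem~\ref{thm: Main} together with the definition of a modular resurgent series recalled in Section 2.1. The three families named here are exactly Types~4, 5, and 6, and the assertion that each is a holomorphic quantum modular form is precisely the conclusion of that theorem; so the remaining content is the pair of resurgence statements, namely that the asymptotic series $\tilde{g}$ of $g$ at the relevant cusp is \emph{modular} resurgent, and that its median resummation returns $g$.

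First I would extract, from the proof of Theorem~\ref{thm: Main}, the explicit Borel transform $\widehat{g}$ of $\tilde{g}$. In each case the coefficients of $\tilde{g}$ are assembled from the Dirichlet coefficients of an associated $L$-function: for Type~4 the $L$-function of the half-integral weight cusp form, for Type~5 the pair $(L_0,L_1)$ attached to the Maass waveform through the Lewis--Zagier correspondence, and for Type~6 the zeta data carried by $E_{1/2}$. Accordingly $\widehat{g}$ is a Dirichlet-type series whose continuation is dictated by the functional equation of that $L$-function, and I would use this to show that its singularities lie in a symmetric arithmetic progression along the imaginary axis with residues, i.e. Stokes constants, equal to the Dirichlet coefficients up to an explicit normalization. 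Gevrey-$1$ growth of the coefficients together with a singular set and Stokes data of this arithmetic shape is exactly what the Fantini--Rella definition requires, which settles the first claim.

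For the median resummation I would form the average of the two lateral Borel--Laplace sums of $\tilde{g}$. Since near the cusp the modular image $g|_\kappa\gamma$ is exponentially small, it is the nonperturbative term realized as the Stokes discontinuity $\mathcal{S}_{+}\tilde{g}-\mathcal{S}_{-}\tilde{g}$ across the singular ray, its size being fixed by the Stokes constants found above. The median $\tfrac12(\mathcal{S}_{+}+\mathcal{S}_{-})\tilde{g}$ is the balanced, symmetric resummation, and using the integral representation of $h_\gamma$ furnished by Theorem~\ref{thm: Main}---in the form of part~(2) of Corollary~\ref{cor: eichler} and its analogues---I would identify it with the analytic continuation of $g$ to $\mathbb{C}_\gamma$. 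In practice this reduces to the Gaussian-integration median resummation carried out for the Maass case in the proof of Theorem~\ref{thm: Main}, applied uniformly across the three families.

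The principal obstacle is not the resurgence of $\widehat{g}$, which is already present in Theorem~\ref{thm: Main}, but the exact matching of this analytic data to the technical definition of a modular resurgent series: one must check that each functional equation yields precisely the symmetric singularity pattern and residue normalization demanded by that definition, rather than merely an endlessly continuable Borel transform. The Eisenstein case (Type~6) requires the extra care flagged for the non-cuspidal setting, where the relevant zeta data contributes a double pole and hence a logarithmic rather than a simple-pole singularity in $\widehat{g}$; fitting this into the modular resurgent framework is the most delicate point of the argument.
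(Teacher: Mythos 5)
Your overall route is the one the paper itself takes: Corollary \ref{cor: conj} receives no standalone proof but is read off from Theorem \ref{thm: Main} and its proof, with the modular-resurgence claim coming from the Borel-transform computations of Section \ref{subsec: asymptotics} (simple poles along the imaginary axis, Stokes constants proportional to the Dirichlet coefficients of the associated $L$-function), and the recovery claim coming from Propositions \ref{prop: med resum} and \ref{prop: eisenstein med}. Your first paragraph, and your final fallback to the ``Gaussian-integration median resummation,'' match this. However, your middle paragraph contains a genuine error. The median resummation $\mathcal{S}_{med}^{\pi/2}(\varphi_{-d/c})$ equals $g$ itself on the upper half-plane; it is not, and cannot be identified with, ``the analytic continuation of $g$ to $\mathbb{C}_\gamma$'' --- $g$ admits no such continuation. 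The object that continues to $\mathbb{C}_\gamma$ is the cocycle $h_\gamma = g - g|_\kappa\gamma$, which equals a single \emph{lateral} Borel--Laplace sum $\mathcal{S}^0(\varphi_{-d/c})$. Moreover, invoking Corollary \ref{cor: eichler}(2) to establish the median identity is backwards: in the paper that integral representation of $h_\gamma$ is a \emph{consequence} of the median identity (proved independently via Gaussian integration in Proposition \ref{prop: med resum}) together with the discontinuity formula $2g|_\kappa\gamma = \mathrm{disc}_\kappa^{\pi/2}(\mathcal{B}_\kappa(\varphi_{-d/c}))$ of Proposition \ref{prop: eichler discont}, not an input to it. Executed literally, your identification step would fail.

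Your diagnosis of Type 6 also mislocates the difficulty. The double pole of the completed $\zeta^2$ at $\rho = 1$ produces a logarithmic term in the asymptotic expansion itself (Proposition \ref{prop: eisenstein asymptotic}), equivalently a logarithmic singularity at the \emph{origin} of the Borel plane; the nonzero singularities at $2\pi i m$ remain simple poles with Stokes constants $8d(|m|)$, so the modular-resurgence structure is untouched. The actual subtlety is that the median resummation of the power-series part $\varphi_0$ alone does not recover $g$: Proposition \ref{prop: eisenstein med} gives $g = \mathcal{S}_{med}^{\pi/2}(\varphi_0) + \frac{1}{2\sqrt{z}}\int_\Gamma e^{-u/z}\left(\frac{-4+2\log(8\pi i u)}{2\pi i u}\right)\frac{du}{\sqrt{\pi u}}$, and the corollary's recovery claim only holds once this log-term resummation is counted as part of the (generalized) median resummation of the full expansion $\varphi_0^*$. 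A complete proof must produce this extra term explicitly rather than flag the case as delicate.
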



The paper is organized as follows. In Section \ref{sec: prelims}, we develop the necessary background on resurgence, as well as holomorphic modular and Maass forms. In Section \ref{sec: intermediate}, we collect a variety of intermediate results in service of our main theorem. 
This will be broken up into three main subsections. 
Subsection 3.1 will compute the asymptotic expansions of functions of \emph{Types 1} through \emph{6} at the relevant rational points. Subsection 3.2 will use a Gaussian integration identity to relate each function to the median resummation of its asymptotic series. Subsection 3.3 will locate the image of each function under the relevant slash operator in the decomposition coming from Subsection 3.2.
In Section \ref{sec: main theorem}, we prove Theorem \ref{thm: Main} and in Section \ref{sec: corollaries}, we prove that the obstructions to modularity have the desired properties for the statements of our corollaries. In Section \ref{sec: questions}, we offer some future directions and open questions that may be of interest to the reader. 

\section*{Acknowledgements}
EM acknowledges the support of an AMS-Simons Travel Grant.  LR was supported by a grant from the Simons Foundation (853830, LR).

\section{Preliminaries}\label{sec: prelims}

\subsection{Resurgence} \label{subsec: resurgence}

Resurgence is a theory of divergent series developed by \'Ecalle in the 1980s, building on work dating back to Borel. It begins with classical Borel resummation, but leverages how exponentially small corrections to Borel-Laplace sums ``resurge" by expanding around different singularities of the Borel transform. For a more thorough recollection of Borel resummation and resurgence, see \cite{Balser, Costin, Sauzin}.

The initial objects of interest in the theory of resurgence are formal power series
\[
\varphi(z) = \sum_{n =0}^\infty a_n z^{n} \in \mathbb{C}[[z]].
\]
A rich source of such series are asymptotic series; 
we write
$\displaystyle{f(z) \sim \varphi(z)}$
if 
\[
f(z) - \sum_{n=0}^{N-1}a_nz^n = O(z^N)
\]
as $z \to 0$ for all $N \geq 1$. If this is the case, we say that $\varphi(z)$ is an asymptotic expansion of $f(z)$ at $z=0$. For our purposes, we will consider asymptotic expansions where $z = \frac{it}{2\pi}$ and $t \to 0^+$. 
We will be interested in factorially divergent series, as characterized by the following definition.

\begin{definition}
	We say that a formal power series $\varphi(z)$ is \textit{Gevrey-1} if there exist $A, B >0$ such that
	\[
	|a_n| \leq AB^n\Gamma(n+1), \quad  \quad (n \geq 0),
	\]
	where $\Gamma$ is the gamma function.
\end{definition}

Here, we note that $\Gamma(n+1)$ equals $n!$ since $n \in \mathbb{N}$. We choose to use the gamma function throughout the paper, even when the factorial suffices, since it will allow for a uniform treatment of all of our cases.

In order to build the theory of resurgence, we will first need to define the Borel and Laplace transforms. Here, we introduce shifted versions of these transforms which will more readily suit our needs. 

\begin{definition} The (formal) Borel transform with shift $\kappa  \in \mathbb{C}$ is defined on formal power series by  
\[\mathcal{B}_{\kappa}\left(\sum_{n=0}^\infty a_n z^n\right) := \sum_{n=0}^\infty \frac{a_n}{\Gamma(n+\kappa)}u^n.
\]
\end{definition}
When $\kappa  = 1$, we simply call this the Borel transform, dropping the subscript. 
The subsequent lemma can be readily proven from these definitions. 

\begin{lemma}
If $\varphi(z)$ is a Gevrey-1 formal power series, its Borel transform is analytic in a neighborhood of $u=0$.	
\end{lemma}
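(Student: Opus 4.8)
The plan is to reduce the claimed analyticity to the convergence of a power series in $u$ whose coefficients are dominated by a geometric sequence, after which Cauchy--Hadamard finishes the argument immediately.

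First I would write the Borel transform of $\varphi(z) = \sum_{n\geq 0} a_n z^n$ explicitly, namely
\[
\mathcal{B}\varphi(u) = \sum_{n=0}^\infty \frac{a_n}{\Gamma(n+1)}\,u^n,
\]
and set $b_n := a_n/\Gamma(n+1)$. The key observation is that dividing by $\Gamma(n+1)$ is exactly tailored to cancel the factorial growth permitted by the Gevrey-1 hypothesis.

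Next I would apply the Gevrey-1 bound $|a_n| \leq A B^n \Gamma(n+1)$ directly: dividing through by $\Gamma(n+1)$ yields $|b_n| \leq A B^n$ for every $n \geq 0$. Hence $\limsup_{n\to\infty} |b_n|^{1/n} \leq B$, so by Cauchy--Hadamard the radius of convergence of $\sum_n b_n u^n$ is at least $1/B > 0$. Equivalently, for $|u| < 1/B$ the series $\sum_n |b_n|\,|u|^n$ is dominated termwise by the convergent geometric series $A\sum_n (B|u|)^n$. Either way, $\mathcal{B}\varphi$ defines a holomorphic function on the disk $\{\,|u| < 1/B\,\}$, which is the desired neighborhood of $u=0$.

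There is essentially no obstacle for the unshifted transform, since the division by $\Gamma(n+1)$ annihilates precisely the factorial growth allowed by the definition. The only point requiring a little care arises if one wants the analogous statement for the shifted transform $\mathcal{B}_\kappa$, whose coefficients are $a_n/\Gamma(n+\kappa)$: there one must note via Stirling's asymptotics that $\Gamma(n+1)/|\Gamma(n+\kappa)|$ grows at most polynomially in $n$, hence contributes nothing to $\limsup_n |b_n|^{1/n}$. This preserves the radius bound $1/B$ and shows the conclusion holds uniformly across all of the weights $\kappa$ considered in the paper, which is precisely the reason for phrasing the transform in terms of the gamma function.
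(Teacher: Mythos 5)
Your proof is correct and is precisely the argument the paper has in mind when it says the lemma ``can be readily proven from these definitions'': the Gevrey-1 bound cancels the $\Gamma(n+1)$ in the denominator, leaving coefficients bounded by $AB^n$, so Cauchy--Hadamard gives a radius of convergence of at least $1/B$. Your additional remark on the shifted transform $\mathcal{B}_\kappa$ (polynomial growth of $\Gamma(n+1)/\Gamma(n+\kappa)$ via Stirling) is a correct and worthwhile supplement, since the paper uses these shifted transforms throughout.
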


We also require the Laplace transform. We will consider integral transforms of functions $\psi(u)$ that are analytic on a half-strip $\{u \in \mathbb{C} \colon \text{dist}(u, e^{i\theta}\mathbb{R}_+) < \varepsilon\}$ with exponential type less than some $\alpha \in \mathbb{R}$, i.e. there exists an $M$ such that $|\psi(re^{i\theta})| \leq Me^{\alpha r}$ as $r$ tends to infinity.

\begin{definition} Suppose $\psi(u)$ is analytic on a half-strip $\{u \in \mathbb{C} \colon \emph{dist}(u, e^{i\theta}\mathbb{R}_+) < \varepsilon\}$ with exponential type less than some $\alpha \in \mathbb{R}$. Then the Laplace transform with shift $\kappa$ in direction $\theta$, when it exists, is defined by 
\[\mathcal{L}_{\kappa}^\theta (\psi)(z) := z^{-\kappa} \int_0^{e^{i\theta}\infty}e^{-u/z} \psi(u) u^{\kappa-1}\;du.
\]
\end{definition}
With the given conditions on $\psi$, the function $\mathcal{L}_{1}^\theta(\psi)(z)$ is defined and analytic in the right half-plane $\{\Re(ze^{-i\theta})> \max(\alpha, 0)\}$, and is locally analytic as one varies $\theta$, as long as the directions remain in the sector where $\psi$ is analytic. As with the Borel transform, we drop the subscript when $\kappa=1$.  

We note that instead of defining the $\kappa$-shifted Borel and Laplace transforms, which we do to simplify exposition, one can multiply the underlying asymptotic series by $z^{\kappa}$ and proceed accordingly.

\begin{definition}
	The composition $\mathcal{L}_{\kappa}^\theta \mathcal{B}_{\kappa}$ is called the \textit{Borel-Laplace sum} with shift $\kappa$ in direction $\theta$, which we will denote by $\mathcal{S}_\kappa^\theta$.  If the Borel-Laplace sum converges for a formal power series $\varphi$, 
we say that $\varphi$ is 1-summable in direction $\theta$.
\end{definition}
The Borel and Laplace transforms are formally inverses on monomials $z^n$ with $\Re(n+\kappa) > 0$, and if $\varphi$ is a series which converges in a neighborhood of 0, then $\varphi = \mathcal{L}^{\theta}_{\kappa}\mathcal{B}_{\kappa}(\varphi)$ for all $\theta$ and for all $\kappa$ such that $\Re(\kappa) > 0$.
Despite the fact that in general $\varphi \neq \mathcal{L}^{\theta}_{\kappa}\mathcal{B}_{\kappa}(\varphi)$, the Borel--Laplace sum of a Gevrey-1 divergent series is of interest to us since $\varphi$ appears as the asymptotic expansion of $\mathcal{S}^\theta(\varphi).$

Before we describe resurgence, we need a result which will allow us to move between shifted Borel--Laplace sums. The result below follows from a straightforward application of results for the unshifted Borel and Laplace transforms (e.g. Lemma 3, Section 3.2, and Theorem 2, Section 3.3, of \cite{Balser}) translated to our setting.

\begin{lemma} \label{lem: borel laplace shifts} Let $\varphi(z)$ be a Gevrey-1 formal power series. Suppose $\kappa, \kappa' \in \frac{1}{2}\mathbb{Z}$ and $\theta \in \mathbb{R}$ are such that $\mathcal{S}_\kappa^\theta(\varphi)$ and $\mathcal{S}_{\kappa'}^\theta(\varphi)$ are analytic for $\{\Re(ze^{-i\theta})>0\}$. Then we have
	\[\mathcal{S}_\kappa^\theta(\varphi) = \mathcal{S}_{\kappa'}^\theta(\varphi).
	\]
\end{lemma}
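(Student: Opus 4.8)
The plan is to reduce the shifted statement to the unshifted ($\kappa=1$) case, where the cited results of \cite{Balser} apply directly. Recall that the shifted transforms are defined by $\mathcal{B}_\kappa(\sum a_n z^n) = \sum \frac{a_n}{\Gamma(n+\kappa)}u^n$ and $\mathcal{L}_\kappa^\theta(\psi)(z) = z^{-\kappa}\int_0^{e^{i\theta}\infty} e^{-u/z}\psi(u)u^{\kappa-1}\,du$. The key observation is the remark made just before the lemma: introducing a shift by $\kappa$ is equivalent to multiplying the underlying series by a power of $z$ and using the ordinary ($\kappa=1$) machinery. So the first step is to make that correspondence precise. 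Given $\varphi(z)=\sum_{n\geq 0} a_n z^n$, I would track the effect of $\mathcal{S}_\kappa^\theta=\mathcal{L}_\kappa^\theta\mathcal{B}_\kappa$ monomial by monomial. On the monomial $z^n$, the shifted Borel transform produces $\frac{1}{\Gamma(n+\kappa)}u^n$, and applying $\mathcal{L}_\kappa^\theta$ and using the Gamma integral $\int_0^{e^{i\theta}\infty} e^{-u/z} u^{n+\kappa-1}\,du = z^{n+\kappa}\Gamma(n+\kappa)$ (valid for $\Re(ze^{-i\theta})>0$, with the contour rotated to angle $\theta$) returns exactly $z^n$. This confirms that $\mathcal{S}_\kappa^\theta$ acts as the identity on convergent series, and more importantly it shows how the $\kappa$-dependence enters: the shift merely reweights the Borel plane by $\Gamma(n+\kappa)/\Gamma(n+1)$ and compensates with the $z^{-\kappa}u^{\kappa-1}$ factors in the Laplace integral.

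Concretely, I would exhibit an explicit relation between $\mathcal{B}_\kappa(\varphi)$ and the unshifted $\mathcal{B}(\psi)$ for an auxiliary series $\psi$, so that the two Borel-Laplace sums $\mathcal{S}_\kappa^\theta$ and $\mathcal{S}_{\kappa'}^\theta$ are each expressed through the single direction-$\theta$ unshifted sum of a common function. The cleanest route is to fix the difference $\kappa-\kappa'=:m\in\frac12\mathbb Z$ and show that the $\kappa$-Borel transform and the $\kappa'$-Borel transform of $\varphi$ differ by a fractional-integration/differentiation operator in the Borel variable $u$ — precisely the kind of operator that the Laplace transform converts into multiplication by a power of $z$, which is then undone by the differing $z^{-\kappa}$ versus $z^{-\kappa'}$ prefactors. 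This is the content of the Balser results cited (Lemma 3, Section 3.2, on the interplay of Borel transforms under multiplication by powers, and Theorem 2, Section 3.3, identifying the resulting Laplace sums). Since $\varphi$ is Gevrey-1, its shifted Borel transform is analytic near $u=0$ by the preceding lemma, so all manipulations take place on genuine analytic functions and the fractional operators are well defined along the ray $e^{i\theta}\mathbb R_+$.

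With the monomial identity and the Borel-plane comparison in hand, the conclusion follows: both $\mathcal{S}_\kappa^\theta(\varphi)$ and $\mathcal{S}_{\kappa'}^\theta(\varphi)$ are analytic continuations of one and the same germ on the half-plane $\{\Re(ze^{-i\theta})>0\}$, and by hypothesis both are analytic on that entire region, so by the identity theorem they coincide there. The main obstacle I anticipate is bookkeeping rather than conceptual: I must justify the contour rotation in the Gamma integral and the interchange of summation/integration uniformly in the sector where $\psi$ is analytic, and I must handle the half-integer shifts $\kappa,\kappa'\in\frac12\mathbb Z$ carefully, since the branch of $u^{\kappa-1}$ and the fractional-calculus operator require choosing the cut consistent with the direction $\theta$ — exactly why the hypothesis restricts to a fixed $\theta$ with both sums analytic on the same half-plane. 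Once the branch conventions are pinned down along $e^{i\theta}\mathbb R_+$, the equality is forced by analytic continuation.
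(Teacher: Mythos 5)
Your proposal is correct and takes essentially the same route as the paper: the paper disposes of this lemma by citing the unshifted results of Balser (Lemma 3, Section 3.2 and Theorem 2, Section 3.3) ``translated to our setting,'' and your reduction---relating $\mathcal{B}_\kappa(\varphi)$ and $\mathcal{B}_{\kappa'}(\varphi)$ by a fractional integration/differentiation in the Borel variable, which the Laplace integral converts into the power of $z$ cancelling the differing prefactors $z^{-\kappa}$ and $z^{-\kappa'}$---is precisely that translation. The remaining bookkeeping you flag (branch choices along $e^{i\theta}\mathbb{R}_+$, Fubini, and shifts $\kappa\leq 0$ handled by fractional derivatives or by splitting off finitely many terms) is exactly what the paper leaves implicit in its one-sentence justification.
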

Thus, we drop the subscript $\kappa$ when appropriate. We will also use the fact that taking Borel-Laplace sums is linear (Theorem 2, Section 3.3 of \cite{Balser}). 

Returning to Gevrey-1 formal power series, we will be interested in when Borel transforms of such series admit an analytic continuation beyond their original neighborhood of convergence. The functions which will be the subject of later sections will primarily have particularly nice analytic continuations and will be of exponential type 0 in the directions we sum. 

 We will say an analytic function $\psi(u)$ defined on the unit disk $|u| < 1$ is \textit{resurgent} if it admits endless analytic continuation (see \cite{ecalle} for a precise definition). 
	 Further, if $\psi(u)$ has only simple poles and logarithmic branch points, we say it is \textit{simple resurgent}. 

In the cases we will entertain for the remainder of the paper, the Borel transforms of our formal power series $\varphi(z)$ will be simple resurgent and will admit only simple poles which are located along finitely many rays, called Stokes rays. These rays divide the $u$-plane into finitely many sectors, and to each sector one associates a directional Laplace transform by the above definition. 

If $\psi$ is simple resurgent with a simple pole at $\omega$, it has the local expansion

\[\psi(u) = -\frac{S_\omega}{2\pi i(u-\omega)} + F(u),
\]
where $F$ is holomorphic in a neighborhood of $\omega$. We call the constant $S_\omega$ the Stokes constant corresponding to $\omega$.

Now we investigate what happens as we vary the argument of the Laplace transform to cross a Stokes ray. Suppose $\Omega$ is the set of poles of $\psi$ and set $\theta := \arg(\omega)$ where $\omega \in \Omega$. If $\Omega_\theta$ is the subset of all poles lying along this ray, then there exists $\varepsilon>0$ such that for any $(\theta^-, \theta^+)$ satisfying $0<\theta^+ - \theta < \varepsilon$ and $0< \theta - \theta^- < \varepsilon$, we have
\[\mathcal{L}_\kappa^{\theta^+}(\psi)(z)-\mathcal{L}_\kappa^{\theta^-}(\psi)(z) = z^{-\kappa}\sum_{\omega \in \Omega_\theta} S_\omega \omega^{\kappa-1} e^{-\omega/z} =: \text{disc}_\kappa^\theta(\psi(u)),
\]
which we will call the discontinuity of $\psi$. 
In order to define a ``canonical" Borel-Laplace sum of a divergent series, one must determine how resum independent of a choice of angle $\theta$. This motivates the construction of the median resummation of a divergent series $\varphi$. 
In general, median resummation is a more complex procedure than averaging, but in the case where $\mathcal{S}^{\theta}(\varphi)$ is analytic in a sector around $e^{i\theta}\mathbb{R}_+$, it takes the form 
\[\mathcal{S}_{med}^\theta(\varphi):=\frac{\mathcal{S}^{\theta^+}(\varphi) + \mathcal{S}^{\theta^-}(\varphi)}{2}.\] 
This function is analytic in the sector with bisecting direction $\theta$ and opening $\pi$. In the case where $\mathcal{B}(\varphi)$ admits a single Stokes ray, this defines a unique function corresponding to $\varphi$. We may also write
\[\mathcal{S}_{med}^\theta(\varphi)=\begin{cases} \mathcal{S}^{\theta^-}(\varphi) + \frac{1}{2}\text{disc}_{\kappa}^\theta(\mathcal{B}_{\kappa}(\varphi)), & \Re(ze^{-i\theta^-}), \Re(ze^{-i\theta})>0, \\
\mathcal{S}^{\theta^+}(\varphi) - \frac{1}{2}\text{disc}_{\kappa}^\theta(\mathcal{B}_{\kappa}(\varphi)), & \Re(ze^{-i\theta^+}), \Re(ze^{-i\theta})>0.
\end{cases}
\] 
This decomposition will be essential to our later realization of quantum modularity. 



\subsection{Eichler Integrals of Cusp Forms of Integer and Half-Integer Weight} \label{subsec: modular}

We begin with a brief recollection of the required definitions and properties of modular forms. 

To start, for functions $f \colon \mathbb{H} \to \mathbb{C}$, we recall the action of the Petersson slash operator of weight $k \in \frac{1}{2}\mathbb{Z}$ associated to $\gamma = \begin{psmallmatrix} a & b \\ c& d \end{psmallmatrix} \in \Gamma_0(N)$ (where $4 \mid N$ if $k \in \frac{1}{2} + \mathbb{Z}$):
\begin{equation}\label{eqn: slash}
f|_k\gamma(z):= \begin{cases} (cz+d)^{-k}f\left(\frac{az+b}{cz+d}\right) & \text{for } k \in \mathbb{Z},\\
 \varepsilon_d^{2k}\left(\frac{c}{d}\right) (cz+d)^{-k} 	f\left(\frac{az+b}{cz+d}\right) & \text{for } k \in \frac{1}{2}+ \mathbb{Z}.
 \end{cases}
\end{equation}
Here, $\left(\frac{\cdot}{\cdot}\right)$ denotes the Kronecker symbol and for odd $d$,
\[\varepsilon_d:= \begin{cases}
 1 & \text{if } d \equiv 1 \pmod{4},\\
 i & \text{if } d \equiv 3 \pmod{4}.
 \end{cases}
\]
We will consider modular forms on congruence subgroups
\[\Gamma_0(N) :=\left\{ \begin{pmatrix} a & b \\ c& d \end{pmatrix} \in \text{SL}_2(\mathbb{Z}) : c \equiv 0 \pmod{N}\right\}.
\]
We are now able to define the spaces of modular forms we will consider.

\begin{definition}
	Let $k \in \frac{1}{2}\mathbb{N}$ and $N \in \mathbb{N}$ (where $4 \mid N$ when $k$ is a half-integer). We say a holomorphic function $f\colon \mathbb{H} \to \mathbb{C}$ is a cusp form of weight $k$ for $\Gamma_0(N)$ if:
	\begin{enumerate}
	\item For all $\gamma = \begin{psmallmatrix} a & b \\ c& d \end{psmallmatrix} \in \Gamma_0(N)$, we have $f(z) = f|_{k}\gamma(z)$ for all $z \in \mathbb{H}$;
	\item As $z$ approaches any cusp of $\Gamma_0(N)$, $f$ decays exponentially.	
	\end{enumerate}
\end{definition}
Such functions necessarily admit Fourier expansions at infinity without constant term. We will often define our function $f$ by such an expansion. 

We will not only consider cusp forms themselves, but also their Eichler integrals. Miraculously, in the case where $k \in \mathbb{N}$, there is a canonical differential operator $D^{k-1}$, where $D = \frac{1}{2\pi i }\frac{\partial}{\partial z}$,  which sends modular forms of weight $2-k$ to modular forms of weight $k$, and so it is natural to consider the $(k-1)$-fold primitive of a cusp form $f$, defined by \[\tilde{f}(z) := \frac{(-2\pi i z)^{k-1}}{\Gamma(k-1)}\int_z^{i\infty} f(\tau) (\tau-z)^{k-2}\;d\tau,\] for $z$ in $\mathbb{H}$. If $f$ has a Fourier expansion
$f(z) = \sum_{n=1}^\infty a_n q^n$, where $q =e^{2\pi i z}$,  then $\tilde{f}$ has the Fourier expansion 
\begin{equation}\tilde{f}(z) = \sum_{n=1}^\infty a_n n^{1-k} q^n. \label{eqn: eichler} \end{equation} 

Such a function $\tilde{f}$ is nearly a modular form of weight $2-k$; its obstruction to modularity is a polynomial of degree at most $k-2$. 
For $\gamma \in \Gamma_0(N)$, one can write the difference
\[\tilde{f} (z) - \tilde{f}|_{2-k}\gamma(z)=\frac{(-2\pi i z)^{k-1}}{\Gamma(k-1)}\int_{\gamma^{-1}(i\infty)}^{i\infty} f(\tau) (\tau - z)^{k-2}\;d\tau.\]

The half-integer analogue was first considered in \cite{lawrence-zagier}. In the case where $k \in \frac{1}{2}+\mathbb{N}_0$, the integral definition is no longer sensible, but the definition on the level of Fourier expansions is still reasonable. The obstruction to modularity is much more interesting in this case. 

It was first shown for a family of examples of false theta functions \cite{lawrence-zagier}, and then extended to the general case in \cite{bringmann-rolen}, that if one considers the limiting values of $\tilde{f}$ as  $z$ approaches  $x \in \mathbb{Q}$ along a geodesic, then the resulting function (still denoted by $\tilde{f}$) from $\mathbb{Q}$ to $\mathbb{C}$ forms a quantum modular form.  Namely, the obstructions to modularity
\[h_\gamma(x) := \tilde{f}(x) - \chi_{-4}(\gamma)\tilde{f}|_{2-k}(x),
\]
extend to real analytic functions on $\mathbb{R}\backslash \{\gamma^{-1}(i\infty)\}$ for all $\gamma \in \Gamma_0(N)$. We defined $\chi_{-4}(n)$ in Section 1.1.

This result was proven by considering a companion function $f^*$ which acts as a non-holomorphic Eichler integral on the lower half-plane. This function has the same asymptotics as $\tilde{f}$ as one approaches the same rational number. This function can be shown to have the obstruction to modularity 
\[f^*(z) - \chi_{-4}(\gamma) f^*|_{2-k}\gamma(z) = \frac{(-2\pi i z)^{k-1}}{\Gamma(k-1)}\int_{\gamma^{-1}(i\infty)}^{i\infty} f(\tau) (\tau - z)^{k-2}\;d\tau
\]
in the lower half-plane, which $\tilde{f}$ inherits on $\mathbb{Q}$. We recall its explicit form here because this function appears again in our results. 

To conclude this section, we recall the relevant functional equations for $L$-functions associated to modular forms. 
Suppose that $f$ is a cusp form of weight $k \in \frac{1}{2}\mathbb{N}$ on $\Gamma_0(N)$ for $N \in \mathbb{N}$ (where $4\mid N$ in the half-integer weight case) which has Fourier expansion $f(z) = \sum_{n=1}^\infty a_n q^n$.
For $-d/c \in \mathbb{Q}$, define $\zeta_c^{-d} := e^{2\pi i (-d/c)}$ and
\[L(f, \zeta_c^{-d};\rho) := \sum_{m = 1}^\infty \frac{a_m \zeta_c^{-dm}}{m^\rho}, \quad \quad \Re(\rho)\gg 0.
\]
Using the Mellin transform of $f$, we define the completed $L$-function by
\[\Lambda(f, \zeta_c^{-d}; \rho) := \left( \frac{c}{2\pi}\right)^\rho \Gamma(\rho) L(f, \zeta_c^{-d}; \rho) = c^\rho \int_0^\infty f(it-d/c) t^{\rho-1}\;dt.
\]
 
 \begin{prop} Suppose $f$ is as above. Then the following are true. 
 \begin{enumerate}
 \item 	If $k \in \mathbb{N}$ and $\gamma = \begin{psmallmatrix} a & b \\ c & d\end{psmallmatrix} \in \Gamma_0(N)$, then $\Lambda(f, \zeta_c^{-d}; \rho)$ analytically continues to $\mathbb{C}$ and satisfies
  \[\Lambda(f, \zeta_c^{-d}; \rho) = i^{-k} \Lambda(f, \zeta_c^a;  k-\rho).
	\]
 \item If $k \in \frac{1}{2} + \mathbb{N}$ and $\gamma = \begin{psmallmatrix} a & b \\ c & d\end{psmallmatrix} \in \Gamma_0(N)$, then then $\Lambda(f, \zeta_c^{-d}; \rho)$ analytically continues to $\mathbb{C}$ and satisfies
 \[\Lambda(f, \zeta_c^{-d}; \rho) = i^{-k} \varepsilon_d^{2k} \left( \frac{c}{d}\right) \Lambda(f, \zeta_c^a; k-\rho).
	\]
 \end{enumerate}
 \end{prop}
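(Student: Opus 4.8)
The plan is to prove both parts at once by the classical Hecke argument, adapted to the additive twist coming from the cusp $-d/c$. Throughout I take $c>0$ (replacing $\gamma$ by $-\gamma$ if necessary). The starting point is the Mellin representation $\Lambda(f,\zeta_c^{-d};\rho)=c^\rho\int_0^\infty f(it-d/c)\,t^{\rho-1}\,dt$ recorded in the statement, which follows from the Fourier expansion $f(it-d/c)=\sum_{m\ge 1}a_m\zeta_c^{-dm}e^{-2\pi m t}$ together with $\int_0^\infty e^{-2\pi m t}t^{\rho-1}\,dt=\Gamma(\rho)(2\pi m)^{-\rho}$, valid for $\Re(\rho)$ large. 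The key geometric input is the computation that along the vertical geodesic $z=-\tfrac{d}{c}+it$ one has $cz+d=ict$ and $\gamma z=\tfrac{a}{c}+\tfrac{i}{c^2 t}$, so $\gamma$ carries this geodesic (running from the cusp $-d/c$ up to $i\infty$) to the vertical geodesic above the cusp $a/c=\gamma(i\infty)$. Feeding $z=-\tfrac{d}{c}+it$ into the modularity relation $f=f|_k\gamma$ and using Equation \ref{eqn: slash} then yields $f(-\tfrac{d}{c}+it)=M\,(ict)^{-k}f\!\left(\tfrac{a}{c}+\tfrac{i}{c^2t}\right)$, where $M=1$ in the integer-weight case and $M=\varepsilon_d^{2k}\left(\frac{c}{d}\right)$ in the half-integer case.

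I would first deduce the analytic continuation. Since $f$ is a cusp form its Fourier expansion has no constant term, so $f(-\tfrac{d}{c}+it)=O(e^{-2\pi t})$ as $t\to\infty$; and the transformation above shows that as $t\to 0^+$ the argument $\tfrac{a}{c}+\tfrac{i}{c^2t}$ tends to $i\infty$, whence $f(-\tfrac{d}{c}+it)$ decays exponentially in $1/t$, dominating the polynomial factor $(ict)^{-k}$. Thus the integral converges for all $\rho\in\C$ and, by differentiation under the integral sign, defines an entire function of $\rho$.

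For the functional equation I substitute $s=1/(c^2 t)$, so that $t=1/(c^2 s)$, $dt=-c^{-2}s^{-2}\,ds$, and the limits reverse. Rewriting $(ict)^{-k}=i^{-k}c^k s^k$ and $t^{\rho-1}=c^{2-2\rho}s^{1-\rho}$ and collecting exponents, all powers of $c$ combine to $c^{k-\rho}$ and the $s$-integrand becomes $f(\tfrac{a}{c}+is)\,s^{(k-\rho)-1}$. This gives exactly $\Lambda(f,\zeta_c^{-d};\rho)=M\,i^{-k}\,\Lambda(f,\zeta_c^a;k-\rho)$, which is part \emph{(1)} when $M=1$ and part \emph{(2)} when $M=\varepsilon_d^{2k}\left(\frac{c}{d}\right)$. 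I emphasize that, unlike the usual self-dual Hecke setup, no splitting of the integral at a fixed point is required: the involution $t\mapsto 1/(c^2 t)$ sends the geodesic over $-d/c$ to the geodesic over the generally distinct cusp $a/c$, so the entire integral transforms at once into the completed $L$-function at $a/c$.

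The main obstacle is the branch bookkeeping in the half-integer case. One must pin down the principal branch of $(cz+d)^{-k}=(ict)^{-k}$ (note that $ict$ lies on the positive imaginary axis for $c,t>0$, so its argument is exactly $\pi/2$) and verify that re-expressing it as $i^{-k}c^k s^k$ after the substitution is consistent with that branch, so the multiplier emerges as precisely $\varepsilon_d^{2k}\left(\frac{c}{d}\right)i^{-k}$ and not with a spurious sign. The automorphy factor $\varepsilon_d^{2k}\left(\frac{c}{d}\right)$ is constant along the geodesic and simply factors out of the integral; checking that it appears unconjugated rather than as $\varepsilon_d^{-2k}$ is exactly the point where one should apply $f=f|_k\gamma$ directly rather than first solving for $f(\gamma z)$.
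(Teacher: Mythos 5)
Your proof is correct, but it is organized genuinely differently from the paper's. The paper runs the classical Riemann--Hecke splitting: it writes $\int_0^\infty=\int_0^{1/c}+\int_{1/c}^\infty$, applies the substitution $t\mapsto 1/(c^2t)$ and modularity only to the near-cusp piece, and so presents $\Lambda(f,\zeta_c^{-d};\rho)$ as a sum of two integrals over $[1/c,\infty)$, each visibly entire; continuation is then immediate and the functional equation is read off from this presentation. You never split: you first prove entirety of the full integral from two-sided exponential decay (using the same modularity relation to control $t\to 0^+$), and then apply the involution $t\mapsto 1/(c^2t)$ to the whole integral, turning it in one stroke into $i^{-k}\varepsilon_d^{2k}\left(\tfrac{c}{d}\right)\Lambda(f,\zeta_c^a;k-\rho)$. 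Both arguments rest on the same inputs---the computation $\gamma(-d/c+i/(c^2t))=a/c+it$, the relation $f=f|_k\gamma$, and the principal-branch identity $(ict)^{-k}=i^{-k}(ct)^{-k}$---but your global substitution makes the multiplier appear exactly once, so you avoid the symmetric rerun with the companion matrix $-\gamma^{-1}$ (and the attendant multiplier-consistency check, i.e.\ triviality of $-I$) that is hidden in the paper's claim that its presentation ``immediately implies'' the functional equation; the paper's split, in exchange, yields exponentially convergent integral representations at every $\rho$, which is the form that survives when constant terms are present, as in the paper's non-cuspidal \emph{Type 6} case. Two points you should still make explicit: the reduction to $c>0$ via $\gamma\mapsto-\gamma$ itself relies on triviality of $-I$ under the half-integral slash (otherwise the multiplier $\varepsilon_d^{2k}\left(\tfrac{c}{d}\right)$ could a priori change), and your final identification of $c^{k-\rho}\int_0^\infty f(a/c+is)\,s^{k-\rho-1}\,ds$ with $\Lambda(f,\zeta_c^a;k-\rho)$ for all $\rho$ requires running your entirety argument once more at the cusp $a/c$.
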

 
 \begin{proof} Both of the claims are routinely proven, but we offer a proof of the second claim as it appears less frequently in the literature. 
We first write 
\[c^\rho \int_0^\infty f(it-d/c) t^{\rho-1}\;dt = c^\rho \int_0^{1/c} f(it-d/c) t^{\rho-1}\;dt + c^\rho \int_{1/c}^{\infty} f(it-d/c) t^{\rho-1}\;dt.
\]	
Under the change of variables $t \mapsto 1/c^2t$, the first integral becomes
\begin{align*}c^\rho \int_0^{1/c} f(it-d/c) t^{\rho-1}\;dt &= c^\rho \int_\infty^{1/c} f(-d/c+i/c^2t)(c^2)^{1-\rho}t^{1-\rho} \frac{-1}{c^2t^2}\;dt \\
&= c^{-\rho} \int_{1/c}^\infty f(-d/c+i/c^2t)t^{-\rho-1} \;dt
\end{align*}
Then recognizing that if $\gamma = \begin{psmallmatrix} a & b \\ c & d\end{psmallmatrix}$, then $\gamma(-d/c+i/c^2t) = a/c + it$, we have
\begin{align*}
c^{-\rho} \int_{1/c}^\infty f(-d/c+i/c^2t)t^{-\rho-1} \;dt 
&= c^{-\rho} \int_{1/c}^\infty f(-d/c+i/c^2t)t^{-\rho-1}\;dt\\ 
&= c^{-\rho} \int_{1/c}^\infty  f|_{k}\gamma(-d/c+i/c^2t)t^{-\rho-1} \;dt \\ 
&= i^{-k}  \varepsilon_d^{2k} \left( \frac{c}{d}\right)  c^{k-\rho} \int_{1/c}^\infty f(a/c+it) t^{k-\rho-1}\;dt.
\end{align*}	
Above, we used that $\gamma \in \Gamma_0(N)$. The above calculation gives a presentation for $\Lambda(f, \zeta_{c}^{-d};\rho)$ which immediately implies both the desired analytic continuation to $\mathbb{C}$ and the stated functional equation. 
\end{proof}

\subsection{Maass Waveforms} \label{subsec: maass}

We now recall the basic definitions and properties of Maass waveforms for $\text{SL}_2(\mathbb{Z})$. We also refer the reader to \cite{bump, iwaniec} for a more thorough introduction to the theory. 

We begin with a smooth function $u \colon \mathbb{H} \to \mathbb{C}$ satisfying the following.
\begin{enumerate}
	\item For all $\gamma = \begin{psmallmatrix} a & b \\ c& d \end{psmallmatrix} \in \Gamma_0(N)$ and for all $z \in \mathbb{H}$, we have $u(\gamma z)=u(z)$;
	\item If $\Delta := -y^2\left( \frac{\partial^2}{\partial x^2}+\frac{\partial^2}{\partial y^2}\right)$ is the hyperbolic Laplacian, then $\Delta u = \lambda u$ for some $\lambda \in \mathbb{C}$.
\end{enumerate}
Such a function necessarily admits a Fourier--Whittaker expansion. If the zeroth Fourier--Whittaker coefficient vanishes and $u$ is square-integrable on a fundamental domain of $\text{SL}_2(\mathbb{Z})$ with respect to the hyperbolic measure, then we say that $u$ is a Maass waveform. 

We call $\lambda$ the eigenvalue of $u$ and if $\lambda = s(1-s)$, then $s$ is called its spectral parameter.
Such a function $u(z)$ has an expansion of the form 
\begin{equation}\label{eqn: maass} u(z) = \sqrt{y}\sum_{n \in \mathbb{Z}_{\neq 0}} A_n K_{s-1/2}(2\pi |n| y) e^{2\pi i nx}, \quad \quad (z = x+iy \in \mathbb{H}),
\end{equation}
where $K_\nu(x)$ is the $K$-Bessel function of order $\nu$.

In the central result of \cite{lewis-zagier}, Lewis and Zagier gave a correspondence between Maass waveforms and three different classes of functions. 
While all show up in our results, we will only recall two of them here. Let $u(z)$ be a Maass waveform with spectral parameter $s$, where $\Re(s)>0$.
Then associated to $u$ are a pair of Dirichlet $L$-series $L_\varepsilon(\rho)$, $\varepsilon = \{0, 1\}$, convergent in some
right half-plane, such that the functions $\Lambda_{\varepsilon}(\rho) :=  \gamma_s(\rho+\varepsilon)L_\varepsilon(\rho)$, where
\[\gamma_s(\rho) := \frac{1}{4\pi^\rho}\Gamma\left(\frac{\rho-s+1/2}{2}\right) \Gamma\left(\frac{\rho+s-1/2}{2}\right),
\]
are entire functions of finite order and satisfy 
\[\Lambda_\varepsilon(1-\rho) = (-1)^\varepsilon \Lambda_\varepsilon(\rho).
\]
From the coefficients in the expansion of $u$ in (\ref{eqn: maass}), their Dirichlet series take the form
\[L_\varepsilon(\rho) := \sum_{m=1}^\infty \frac{A_m + (-1)^\varepsilon A_{-m}}{m^\rho}, \quad \quad (\varepsilon \in \{0,1\}). \]
Moreoever, there is also a corresponding holomorphic periodic function $f\colon \mathbb{C}\backslash \mathbb{R} \to \mathbb{C}$, defined by
\[f(z) := \begin{cases} \sum_{n>0}^\infty A_n n^{s-1/2} q^n, & \text{if } z \in \mathbb{H},\\
 	-\sum_{n<0}^\infty A_{n} |n|^{s-1/2} q^n, & \text{if } z \in \mathbb{H}^-,
 \end{cases}
\]
where $\mathbb{H}^-$ denotes the lower half-plane. 
From this point on, we will abuse notation and use $f$ to refer to the restriction of $f$ to the upper half-plane. Throughout, we will prove results for $f$ in the upper half-plane, but the corresponding results for $f$ in the lower half-plane are completely analogous, and in fact $f$ on $\mathbb{C} \backslash\mathbb{R}$ is really a single holomorphic quantum modular form. 

We will also consider one non-cuspidal case in our work. We will define the non-holomorphic Eisenstein series with spectral parameter $s$ by

\begin{equation}\begin{aligned} \label{eqn: maass eisenstein}
E_s(z) &:= \zeta(2s)y^s + \frac{\pi^{1/2}\Gamma(s-\frac{1}{2})}{\Gamma(s)}\zeta(2s-1)y^{1-s} \\&+ \frac{4\pi^s}{\Gamma(s)}\sqrt{y}\sum_{n=1}^\infty n^{1/2-s}\sigma_{2s-1}(n) K_{s-1/2}(2\pi ny) \cos(2\pi nx), 
\end{aligned}
\end{equation}
where $\sigma_\nu(n) := \sum_{d\mid n} d^\nu$ is the $\nu$-th sum of divisors function. We will specifically analyze the case where $s=1/2$. As is standard, we will write $d(n)$ for $\sigma_0(n).$ For the associated holomorphic periodic function in this case, we pick the normalization 
\[f(z):= 1-4\sum_{n=1}^\infty d(n)q^n.
\]

Above, we gave the functional equations for the pair of $L$-functions associated to a Maass waveform. Here, we state the analogous result for our Eisenstein series of interest. 
The corresponding $L$-function is proportional to
\[\zeta^2(\rho) = \sum_{m=1}^\infty \frac{d(m)}{m^\rho}, \quad \quad \Re(\rho)>1,
\]
where $\zeta(\rho)$ is the Riemann zeta function. The completion of the function $\zeta^2(\rho)$ has meromorphic continuation to $\mathbb{C}$ with a pole at $s=1$ and satisfies the functional equation 
\[\pi^{-\rho} \Gamma\left(\frac{\rho}{2}\right)^2 \zeta^2(\rho) = \pi^{-(1-\rho)} \Gamma\left(\frac{1-\rho}{2}\right)^2 \zeta ^2(1-\rho).
\]

\section{Intermediate results}\label{sec: intermediate}

\subsection{Asymptotics} \label{subsec: asymptotics}

Throughout this paper, we will regularly leverage the relationship between the coefficients of an asymptotic expansion of an exponential series and the analytic continuation of an associated Dirichlet series. For a more thorough discussion, we direct the reader to \cite{zagier-appendix}, but for most of our applications, the following result will suffice:
\begin{prop}[Proposition 2, \cite{zagier-appendix}]
Let $L(\rho)$ be a Dirichlet series convergent in some right half-plane
\[L(\rho) = \sum_{m=1}^\infty \frac{a_m}{m^\rho}, \quad \quad \Re(\rho) \gg 0.
\]
Suppose that the function $\phi(t) = \sum_{m=1}^\infty a_me^{-mt}$ has an asymptotic expansion of the form 
\[\phi(t) \sim \sum_{n=0}^\infty c_n t^n, \quad \quad (t \to 0).
\]
Then $L(\rho)$ has an analytic continuation to all $\rho$, and its values at non-positive integers are given by
\[ L(-n) = (-1)^n n! c_n.
\]
Conversely, if one assumes $L(\rho)$ analytically continues to $\mathbb{C}$, then $\phi(t)$ admits an asymptotic expansion with coefficients given by 
\[c_n  = (-1)^n \frac{L(-n)}{n!}.
\]
\end{prop}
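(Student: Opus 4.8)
The plan is to route everything through the Mellin transform, since for $\Re(\rho)$ large the termwise computation
$\int_0^\infty \phi(t)\,t^{\rho-1}\,dt = \sum_{m=1}^\infty a_m \int_0^\infty e^{-mt} t^{\rho-1}\,dt = \Gamma(\rho) L(\rho)$
holds by absolute convergence in the region where $L$ converges. Thus $\Gamma(\rho) L(\rho)$ is precisely the Mellin transform of $\phi$, and the proposition becomes the standard dictionary between the small-$t$ behavior of a function and the poles of its Mellin transform.

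For the forward direction I would split $\int_0^\infty = \int_0^1 + \int_1^\infty$. Since $\phi(t)$ decays exponentially as $t \to \infty$ (dominated by its leading term $a_1 e^{-t}$), the tail $\int_1^\infty \phi(t)\,t^{\rho-1}\,dt$ is entire in $\rho$. For the head I substitute the asymptotic expansion: writing $\phi(t) = \sum_{n=0}^{N-1} c_n t^n + R_N(t)$ with $R_N(t) = O(t^N)$ as $t \to 0$, I obtain
$\int_0^1 \phi(t)\,t^{\rho-1}\,dt = \sum_{n=0}^{N-1} \frac{c_n}{\rho+n} + \int_0^1 R_N(t)\,t^{\rho-1}\,dt$,
where the remainder integral is holomorphic for $\Re(\rho) > -N$. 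As $N$ is arbitrary, this exhibits $\Gamma(\rho) L(\rho)$ as a meromorphic function on $\mathbb{C}$, holomorphic except for simple poles at $\rho = -n$ ($n \geq 0$) with residue $c_n$. Dividing by $\Gamma(\rho)$ — which is zero-free with simple poles of residue $(-1)^n/n!$ at $\rho = -n$ — shows that $L(\rho)$ is in fact entire, and matching residues at $\rho = -n$ gives $c_n = L(-n)\cdot(-1)^n/n!$, i.e. $L(-n) = (-1)^n n!\, c_n$.

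For the converse, assuming $L$ continues to $\mathbb{C}$, I would invert the Mellin transform: for $c$ in the region of convergence,
$\phi(t) = \frac{1}{2\pi i}\int_{c-i\infty}^{c+i\infty} \Gamma(\rho) L(\rho)\, t^{-\rho}\,d\rho$,
and then push the contour leftward past the poles at $\rho = 0, -1, \dots, -(N-1)$. Each residue contributes $\frac{(-1)^n}{n!} L(-n)\, t^n$, while the shifted contour integral is $O(t^N)$, yielding the asymptotic expansion with $c_n = (-1)^n L(-n)/n!$, consistent with the forward direction.

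The main obstacle is the justification of this contour shift: it requires $\Gamma(\rho) L(\rho)$ to decay fast enough along vertical lines for the integrals to converge and for the shifted integral to be genuinely $O(t^N)$. The exponential decay of $\Gamma$ in vertical strips (Stirling) does the heavy lifting, but one must pair it with a polynomial growth bound on $L(\rho)$ in vertical strips — a moderate-growth hypothesis which, in every application in this paper, is guaranteed by the relevant functional equations together with Phragm\'en--Lindel\"of. I expect this growth control to be the only genuinely delicate point; the forward direction is essentially bookkeeping with the asymptotic remainder, and the special-value formula falls out purely from comparing the two simple poles of $\Gamma(\rho) L(\rho)$ and $\Gamma(\rho)$.
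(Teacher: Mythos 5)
The paper itself gives no proof of this proposition---it is quoted directly from Zagier's appendix \cite{zagier-appendix}---and your Mellin-transform argument is precisely the proof given in that source: identify $\Gamma(\rho)L(\rho)$ as the Mellin transform of $\phi$, split the integral at $t=1$, integrate the asymptotic expansion termwise to obtain the meromorphic continuation with simple poles of residue $c_n$ at $\rho=-n$, and divide by $\Gamma(\rho)$ to read off $L(-n)=(-1)^n n!\,c_n$, with the converse handled by Mellin inversion and a leftward contour shift. Your forward direction is complete and correct, and in the converse you have correctly isolated the one genuine subtlety: the contour shift requires growth control on $L$ in vertical strips (anything beating the $e^{\pi|\Im \rho|/2}$ decay of $\Gamma$ suffices), a hypothesis suppressed in the statement as quoted but available in every case arising in this paper via the relevant functional equations and Phragm\'en--Lindel\"of.
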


Now we compute the Borel transforms of the asymptotic series attached to cusp forms, their Eichler integrals, and periodic functions of Maass forms. In what follows, we assume $\gamma = \begin{psmallmatrix} a & b \\ c & d\end{psmallmatrix} \in \Gamma_0(N)$, and that the asymptotics are taken along geodesics in the upper half-plane. 

When $g$ is of \emph{Type 1} or \emph{2}, the above proposition tells us that as $z \to -d/c$ along a geodesic, 

\[\tilde{g}(z) \sim \varphi_{-d/c}(z) := \sum_{n=0}^\infty \frac{L(g, -d/c; -n)}{\Gamma(n+1)} (2\pi i)^n (z+d/c)^n.
\]
Using the fact that $\Lambda(g, -d/c; \rho)$ is entire and $\Gamma(\rho)$ has poles at non-positive integers, $L(g, -d/c; \rho)$ is forced to have zeros at these locations. Therefore each term equals zero. This is to be expected since $g$ is a cusp form. 

When $g$ is of \emph{Type 3}, as $z \to -d/c$, we have
\[g(z) \sim \varphi_{-d/c}(z) := \sum_{n=0}^\infty \frac{L(f, -d/c; -n-1+k)}{\Gamma(n+1)} (2\pi i)^n (z+d/c)^n,
\]
where $f$ is the cusp form associated to $g$. When $-n-1+k$ is a non-positive integer, we again have that $L(f, -d/c; -n-1+k)=0$. Then only the first $k-2$ terms are nonzero. Using the functional equation of $L(f, -d/c; \rho)$, this equals
\[\sum_{n=0}^{k-2} i^{-k} \left(\frac{c}{2\pi}\right)^{2n+2-k} \frac{L(f, a/c; n+1)}{\Gamma(-n-1+k)} (2\pi i)^n (z+d/c)^n. 
\]

We now consider when $g$ is of \emph{Type 4}. Again we have that as $z \to -d/c$, 

\[g(z) \sim \varphi_{-d/c}(z) := \sum_{n=0}^\infty \frac{L(f, -d/c; -n-1+k)}{\Gamma(n+1)} (2\pi i)^n (z+d/c)^n,
\]
where $f$ is the cusp form associated to $g$. By the functional equation of $L(f, -d/c;\rho)$, we have that this equals
\[\sum_{n=0}^\infty i^{-k} \varepsilon_d^{2k} \left( \frac{c}{d}\right) \left(\frac{c}{2\pi}\right)^{2n+2-k} \frac{L(f, a/c; n+1)}{\Gamma(-n-1+k)} (2\pi i)^n (z+d/c)^n.
\]
 Using that 
\[\frac{1}{\Gamma(-n-1+k)} = \frac{\sin(\pi(n+2-k))}{\pi} \Gamma(n+2-k),
\]
and that $\sin(m\pi + \pi/2)= (-1)^m$, after rearranging we have
\[\frac{1}{\pi i}  \frac{\varepsilon_d^{2k} \left( \frac{c}{d}\right) }{c^{2-k}}\sum_{n=0}^\infty \Gamma(n+2-k) \frac{L(f, a/c; n+1)}{(2\pi i /c^2)^{n+2-k}} (z+d/c)^n. 
\]
The fact that the above asymptotic series is Gevrey-1 is clear. 
Now we split the sum along $n=\ell$ for $\ell$ sufficiently large so that we can both leverage our Dirichlet series representation for $L(f, a/c; \rho)$ and ensure the convergence of a Laplace transform we will apply later. We write
\begin{align*}\frac{1}{\pi i}  \frac{\varepsilon_d^{2k} \left( \frac{c}{d}\right) }{c^{2-k}} &\sum_{n=0}^{\ell-1}\Gamma(n+2-k) \frac{L(f, a/c; n+1)}{(2\pi i /c^2)^{n+2-k}} u^n \\&+ \frac{1}{\pi i} \frac{\varepsilon_d^{2k} \left( \frac{c}{d}\right)}{c^{2-k}}\sum_{n=\ell}^{\infty} \Gamma(n+2-k)\frac{L(f, a/c; n+1)}{(2\pi i /c^2)^{n+2-k}} u^n.
\end{align*}

We will deal with the finitely many leading terms separately.  For future use, we will call the first sum $\widehat{\varphi}_{-d/c}^{\ell}(z)$ second sum $\varphi_{-d/c}^\ell(z)$.  For now, taking the Borel transform with shift $2-k$ with respect to $z+d/c$ of $\varphi_{-d/c}^\ell$ gives
\[\frac{1}{\pi i}  \frac{\varepsilon_d^{2k} \left( \frac{c}{d}\right)}{c^{2-k}}\sum_{n=\ell}^\infty \frac{L(f, a/c; n+1)}{(2\pi i /c^2)^{n+2-k}} u^n.
\]
This can be rewritten as 
\begin{align*}
\frac{1}{\pi i}  \frac{\varepsilon_d^{2k} \left( \frac{c}{d}\right) }{c^{2-k}}\sum_{n=\ell}^{\infty} \sum_{m=1}^\infty \frac{a_m m^{1-k}\zeta_c^{am}}{\left(\frac{2\pi i m}{c^2}\right)^{n+2-k}} u^n 
= \frac{1}{\pi i}  \frac{\varepsilon_d^{2k} \left( \frac{c}{d}\right) }{c^{2-k}} \sum_{m=1}^\infty \frac{a_m m^{1-k}\zeta_c^{am}}{\left(\frac{2\pi i m}{c^2}\right)^{2-k}}\sum_{n=\ell}^{\infty}  \left(\frac{u}{\left(\frac{2\pi i m}{c^2}\right)}\right)^n
\end{align*}
where we used absolute convergence to switch the order of summation. 
From here, summing the geometric series gives 
\[\frac{-1}{\pi i}  \frac{\varepsilon_d^{2k} \left( \frac{c}{d}\right) }{c^{2-k}}  \sum_{m=1}^\infty \frac{a_m m^{1-k}\zeta_c^{am} }{u-\left(\frac{2\pi i m}{c^2}\right)}\frac{u^{\ell}}{\left(\frac{2\pi i m}{c^2}\right)^{\ell+1-k}},
\]
which has simple poles at $\omega_m = \frac{2\pi i m}{c^2}$ with residues $-2  \frac{\varepsilon_d^{2k} \left( \frac{c}{d}\right)}{c^{2-k}}  a_m m^{1-k}\zeta_c^{am}\omega_m^{k-1}$. The sum $\widehat{\varphi}_{-d/c}^{\ell}(z)$ will be resummed separately.

Turning to $g$ of \emph{Type 5}, we appeal to 
the following result which is a corollary of Proposition 4.1 of \cite{fantini-rella}:

\begin{prop} \label{prop: maass discont}
	Let $g$ be of Type 5 and let $\varphi_0(z)$ denote the asymptotic expansion of $g(z)$ as $z \to 0^+$ along a geodesic. Then we have
	\[
	\varphi_0(z)= \frac{1}{\pi i}\sum_{n =0}^\infty \Gamma(n+1) \sum_{m \neq 0}\frac{A_m}{(2\pi i m)^{n+1}}z^n.
	\]
\end{prop}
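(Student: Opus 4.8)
The plan is to read the asymptotic series straight off the Fourier expansion of $g$ using the asymptotic-to-Dirichlet dictionary (the quoted Proposition from \cite{zagier-appendix}), and then to convert the resulting special values into the explicit sum over $m \neq 0$ via the Lewis--Zagier functional equation from Section \ref{subsec: maass}. Since $s=\tfrac12$ the weight factor $A_m m^{s-1/2}$ collapses to $A_m$, so along the geodesic $z=it/(2\pi)$ one has $\phi(t):=g(it/(2\pi))=\sum_{m\ge1}A_m e^{-mt}$, which is exactly the shape handled by that Proposition. Applying it to $L(\rho):=\sum_{m\ge1}A_m m^{-\rho}$ --- whose continuation to $\C$ follows from the entirety of the completed $L$-functions $\Lambda_\varepsilon$ recalled in Section \ref{subsec: maass} --- gives $\phi(t)\sim\sum_n(-1)^n L(-n)t^n/n!$, and substituting $t=-2\pi i z$ (so that $(-1)^n(-2\pi i)^n=(2\pi i)^n$) produces the preliminary form
\[
\varphi_0(z)=\sum_{n=0}^\infty\frac{L(-n)}{\Gamma(n+1)}(2\pi i)^n z^n.
\]

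The remaining task is to show that $\tfrac{(2\pi i)^n}{\Gamma(n+1)}L(-n)$ equals the claimed coefficient $\tfrac{\Gamma(n+1)}{\pi i}\sum_{m\ne0}A_m(2\pi i m)^{-(n+1)}$. Writing $L^{\pm}(\rho):=\sum_{m\ge1}A_{\pm m}m^{-\rho}$ so that $L=L^+$ and $L_\varepsilon=L^++(-1)^\varepsilon L^-$, I would evaluate $\Lambda_\varepsilon(1-\rho)=(-1)^\varepsilon\Lambda_\varepsilon(\rho)$ at $\rho=n+1$. With $s=\tfrac12$ the gamma factor simplifies to $\gamma_{1/2}(\rho)=\tfrac14\pi^{-\rho}\Gamma(\rho/2)^2$, and the ratio $\gamma_{1/2}(n+1+\varepsilon)/\gamma_{1/2}(-n+\varepsilon)$ carries a factor $1/\Gamma((\varepsilon-n)/2)^2$ which vanishes for precisely one parity: the $\varepsilon\equiv0$ contribution dies when $n$ is even and the $\varepsilon\equiv1$ contribution dies when $n$ is odd. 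Hence exactly one of $L_0(-n),L_1(-n)$ survives, and $2L^+(-n)=L_0(-n)+L_1(-n)$ collapses to a single $L_\varepsilon(n+1)$, whose numerator $A_m+(-1)^\varepsilon A_{-m}$ reassembles into $\sum_{m\ne0}A_m m^{-(n+1)}$.

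What is then left is a constant chase. I would evaluate the surviving gamma ratio via $\Gamma(\tfrac12-k)=(-4)^k k!\sqrt{\pi}/(2k)!$ (and its half-integer analogue for odd $n$), giving $(\Gamma(\tfrac{n+1+\varepsilon}{2})/\Gamma(\tfrac{\varepsilon-n}{2}))^2=(n!)^2/((-4)^n\pi)$, and combine this with the power $\pi^{-2n-1}$ coming from the $\pi$-part of the ratio and the sign $(-1)^\varepsilon$. Collecting the factors of $2$, $\pi$, and $i$ --- and using $i^{2n}=(-1)^n=1$ for the surviving parity --- collapses the constant to exactly $\Gamma(n+1)/(\pi i(2\pi i)^{n+1})$, matching the target.

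I expect the principal difficulty to be organizational rather than conceptual: keeping the parity bookkeeping consistent (which $\varepsilon$ survives, and that each vanishing $\Gamma$-pole is always paired against a \emph{finite} $L_\varepsilon(n+1)$) while simultaneously tracking the tower of constants $2^n$, $\pi^{\pm}$, and $i^n$ so that they telescope to the clean coefficient. A secondary point needing care is the justification for splitting $\sum_{m\ne0}A_m m^{-(n+1)}$ into $L^{\pm}(n+1)$ and identifying these with the analytically continued $L_\varepsilon$, since the defining Dirichlet series need not converge at $\rho=n+1$; this is where I would invoke the entirety of $\Lambda_\varepsilon$. As the statement is a specialization of Proposition 4.1 of \cite{fantini-rella}, I would present it as a direct corollary and relegate the constant verification above to a brief computation.
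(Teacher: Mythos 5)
Your proof is correct; I checked the constant chase and it closes. But you should know that the paper does not actually prove this proposition: it is stated as a quoted consequence of Proposition 4.1 of \cite{fantini-rella}, with no argument given. Your derivation therefore supplies, self-contained, what the paper outsources, and it does so using exactly the paper's own toolkit --- the asymptotics-to-$L$-values dictionary from \cite{zagier-appendix} together with the Lewis--Zagier functional equations --- in close parallel to how the paper itself handles the Eisenstein (Type 6) analogue in Proposition \ref{prop: eisenstein asymptotic} (Mellin/Dirichlet continuation, then the functional equation to flip to convergent values, then expand and reassemble). What your route buys is transparency: the parity mechanism (exactly one of $L_0(-n)$, $L_1(-n)$ survives, namely the one with $\varepsilon'\equiv n+1\pmod 2$, and $2L^+(-n)=L_{\varepsilon'}(-n)$ pairs via the functional equation with $L_{\varepsilon'}(n+1)=\sum_{m\neq 0}A_m m^{-(n+1)}$) is made explicit rather than imported. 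Two small slips in your bookkeeping, neither fatal: the surviving gamma ratio is $\bigl(\Gamma(\tfrac{n+1+\varepsilon'}{2})/\Gamma(\tfrac{\varepsilon'-n}{2})\bigr)^2=(n!)^2/(4^n\pi)$ for \emph{both} parities, so your $(-4)^n$ is off by a sign when $n$ is odd; and the phrase ``$i^{2n}=(-1)^n=1$ for the surviving parity'' cannot be right as stated, since both parities of $n$ occur. What actually happens is that the sign $(-1)^{\varepsilon'}$ (equal to $-1$ for $n$ even and $+1$ for $n$ odd) cancels against $i^{2n+2}=-(-1)^n$, after which the coefficient collapses uniformly to $\Gamma(n+1)/\bigl(\pi i(2\pi i)^{n+1}\bigr)$, exactly as you claim. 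Your closing fallback --- citing Fantini--Rella and relegating the verification to a remark --- is literally what the paper does, minus the verification.
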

Taking a Borel transform of $\varphi(z)$, we have 
\begin{align*}\mathcal{B}_1(\varphi_0(z))= \frac{1}{\pi i}\sum_{n =0}^\infty  \sum_{m \neq 0}\frac{A_m}{(2\pi i m)^{n+1}}u^n=  \frac{1}{\pi i} \sum_{m \neq 0}\frac{A_m}{2\pi i m} \sum_{n =0}^\infty  \frac{u^n}{(2\pi i m)^{n}}=  \frac{-1}{\pi i} \sum_{m \neq 0} \frac{A_m}{u-2\pi i m}.	\end{align*}
This ultimately tells us that $\mathcal{B}_1(\varphi_0(z))$ has a meromorphic continuation to $\mathbb{C}$ with simple poles at $2\pi i m$ for $m \in \mathbb{Z}_{\neq 0}$ with residues $-2A_m$. 

In the case where $g$ is of \emph{Type 6}, the asymptotics require slightly more care. 
\begin{prop} \label{prop: eisenstein asymptotic}
	Let $g$ be of Type 6 and let $\varphi^*_0(z)$ denote the asymptotic expansion of $g(z)$ as $z \to 0^+$ along a geodesic. Then we have
		\[
	\varphi^*_0(z)= -8\frac{\log(-2\pi i z)-\gamma}{2\pi i z} -\frac{8}{\pi i} \sum_{\substack{n=1 \\ n \text{ odd}}}^\infty \Gamma(n+1) \sum_{m=1}^\infty \frac{d(m)}{(2\pi im)^{n+1}} z^n.
	\]
\end{prop}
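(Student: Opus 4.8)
The plan is to reduce everything to the single Dirichlet generating series $\zeta^2(\rho)=\sum_{m\ge 1}d(m)m^{-\rho}$ and to read off the asymptotics from the Mellin--Barnes representation of its associated exponential series, in the spirit of the asymptotics--Dirichlet series dictionary of \cite{zagier-appendix}. Writing $g=f=1-4\phi$ with $\phi(t):=\sum_{m\ge 1}d(m)e^{-mt}$ and setting $t=-2\pi i z$ (the vertical geodesic into $0$), it suffices to produce the asymptotic expansion of $\phi(t)$ as $t\to 0^+$. Since $\int_0^\infty \phi(t)t^{\rho-1}\,dt=\Gamma(\rho)\zeta^2(\rho)$ for $\Re(\rho)>1$, Mellin inversion gives $\phi(t)=\frac{1}{2\pi i}\int_{(\sigma)}\Gamma(\rho)\zeta^2(\rho)t^{-\rho}\,d\rho$ for $\sigma>1$, and I would shift the contour to $-\infty$, collecting the residues of the integrand; standard Stirling bounds on $\Gamma$ together with the polynomial growth of $\zeta$ on vertical lines justify both the shift and the fact that the resulting residue sum is a genuine asymptotic expansion.

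The residues organize the expansion into its three structural pieces. At $\rho=0$, the simple pole of $\Gamma$ meets $\zeta^2(0)=\tfrac14$ and contributes $\tfrac14$ to $\phi$, so $-4\phi$ contributes $-1$, exactly cancelling the constant $1$ in $f$; this is why $\varphi^*_0$ carries no constant term. At the negative integers $\rho=-n$ with $n\ge 1$, the residue of $\Gamma$ is $(-1)^n/n!$ and the value is $\zeta^2(-n)$, but the trivial zeros $\zeta(-2k)=0$ force $\zeta^2(-n)=0$ for even $n$, leaving only the odd-$n$ terms. For these I would invoke the functional equation $\pi^{-\rho}\Gamma(\rho/2)^2\zeta^2(\rho)=\pi^{-(1-\rho)}\Gamma((1-\rho)/2)^2\zeta^2(1-\rho)$ at $\rho=-n$ to trade $\zeta^2(-n)$ for $\zeta^2(n+1)=\sum_m d(m)m^{-(n+1)}$; the reflection and duplication formulae collapse the ratio of gamma factors to a clean multiple of $(n!)^2$ and powers of $2\pi i$, producing precisely the terms $\Gamma(n+1)\sum_m d(m)(2\pi i m)^{-(n+1)}$ after re-substituting $t=-2\pi i z$.

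The remaining, and genuinely new, contribution is the double pole at $\rho=1$, which is absent in all of the cuspidal Types; this is where the logarithm and the Euler constant enter, in the spirit of a Kronecker limit formula. Expanding $\zeta(\rho)^2=(\rho-1)^{-2}+2\gamma(\rho-1)^{-1}+\cdots$ against $\Gamma(\rho)t^{-\rho}=t^{-1}\big(1-(\gamma+\log t)(\rho-1)+\cdots\big)$ (using $\Gamma'(1)=-\gamma$) and extracting the coefficient of $(\rho-1)^{-1}$ yields a term of the shape $t^{-1}(\gamma-\log t)$, which after the $z$-substitution is the singular term proportional to $\frac{\log(-2\pi i z)-\gamma}{2\pi i z}$. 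The main obstacle is precisely this double-pole computation: one must track the Laurent coefficients and the branch of the logarithm carefully to fix both the constant and the sign, and one must confirm that the logarithmic singularity is compatible with, and indeed governs, the resurgent and Borel structure exploited in the later sections. The contour-shift justification---uniform bounds allowing one to push the line of integration past infinitely many poles---is routine but should be recorded, since it is what upgrades the formal residue sum to the asserted asymptotic expansion.
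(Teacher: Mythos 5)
Your route is the same as the paper's: Mellin inversion of $\Gamma(\rho)\zeta^2(\rho)$, a contour shift collecting the double pole at $\rho=1$, the simple pole at $\rho=0$ cancelling the constant $1$, the trivial zeros removing the even-$n$ terms, and the functional equation trading $\zeta^2(-n)$ for $\zeta^2(n+1)$ before expanding the Dirichlet series; the paper simply states these steps more tersely. Your treatment of the power-series part is complete and correct: for odd $n$ one gets $\zeta^2(-n)=2^{-2n}\pi^{-2n-2}(n!)^2\zeta^2(n+1)$, and the odd-$n$ double sum in the statement follows exactly as you describe.

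The genuine gap is the step you leave at ``proportional to'': the constant of the singular term, which is the one new feature of the non-cuspidal case and the reason this proposition exists at all. If you finish your own Laurent computation, the coefficient of $(\rho-1)^{-1}$ in $\bigl[\tfrac{1}{(\rho-1)^2}+\tfrac{2\gamma}{\rho-1}\bigr]\cdot t^{-1}\bigl[1-(\gamma+\log t)(\rho-1)\bigr]$ is $t^{-1}(2\gamma-\gamma-\log t)=t^{-1}(\gamma-\log t)$, so $\phi(t)=\sum_{m\ge1} d(m)e^{-mt}\sim \frac{\gamma-\log t}{t}+\frac14+\cdots$; this is Wigert's classical asymptotic, and it is the Abelian shadow of $\sum_{n\le x}d(n)\sim x\log x+(2\gamma-1)x$. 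Substituting $t=-2\pi i z$ into $g=1-4\phi$ then gives
\[
g(z)\sim -4\,\frac{\log(-2\pi i z)-\gamma}{2\pi i z}-4\sum_{n\ge 1}\frac{\zeta^2(-n)}{n!}(2\pi i z)^n,
\]
i.e. the logarithmic term carries the constant $-4$, not the $-8$ asserted in the statement (and asserted without computation in the paper's proof). There is no hidden factor of two: the normalization is fixed as $g=1-4\sum_n d(n)q^n$, and the paper's own parallel computation in the Type 6 median resummation result, where $1-4\sum_n d(n)e^{-\sqrt{8\pi i nu}}$ produces the leading term $\frac{-4+2\log(8\pi iu)}{2\pi iu}$, is consistent with the residue above and not with $-8$. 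So your method is sound and, carried to completion, proves the proposition with $-8$ replaced by $-4$; but as written your proposal neither fixes the constant nor notices this discrepancy, and pinning down that constant was precisely the point that needed to be nailed down.
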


\begin{proof}
One considers the Mellin transform of $\sum_{n=1}^\infty d(n) q^n$ and moves the contour over the pole at $s=1$ to compute its contribution. We have 
\[g(z) = 1-4\sum_{n=1}^\infty d(n) q^n \sim -8\frac{\log(-2\pi i z)-\gamma}{2\pi i z} - 4 \sum_{n=1}^\infty \frac{\zeta^2(-n)}{n!}(2\pi i z)^n,
\]
where $\gamma$ is the Euler-Mascheroni constant. From this point, we analyze the latter sum, which we call $\varphi_0(z)$. Using the functional equation for $\zeta^2$ and identities for the $\Gamma$-function, as formal power series we have
\[- 4 \sum_{n=1}^\infty \frac{\zeta^2(-n)}{n!}(2\pi i z)^n = -4\sum_{\substack{n=1 \\ n \text{ odd}}}^\infty \Gamma(n+1) \frac{i^n}{2^n\pi^{n+2}} \zeta^2(n+1) z^n. 
\]
Now we are in the range where the Dirichlet series representation of $\zeta^2$ converges, and so we may expand and rearrange to arrive at

\[\frac{-8}{\pi i} \sum_{\substack{n=1 \\ n \text{ odd}}}^\infty \Gamma(n+1) \sum_{m=1}^\infty \frac{d(m)}{(2\pi im)^{n+1}} z^n. \qedhere
\]\end{proof}
Now considering the Borel transform of $\varphi_0$ and evaluating the geometric series, we have that
\begin{align*}\mathcal{B}_1(\varphi_0(z)) &= \frac{-8}{\pi i} \sum_{\substack{n=1 \\ n \text{ odd}}}^\infty \sum_{m=1}^\infty \frac{d(m)}{(2\pi im)^{n+1}} u^n 
=  \frac{-8}{\pi i} \sum_{m=1}^\infty \frac{d(m)u}{(2\pi im)^{2} -u^2}.
\end{align*}
Using the same strategy as in the proof of Proposition 4.1 of \cite{fantini-rella}, this becomes
\[\frac{4}{\pi i} \sum_{m\neq 0} \frac{d(|m|)}{z-2\pi i m}.
\]
Then $\mathcal{B}_1(\varphi_0(z))$ has meromorphic continuation to $\mathbb{C}$ with simple poles at $2\pi i m$ for $m \in \mathbb{Z}_{\neq 0}$ with residues $8d(|m|)$. 

\subsection{Median Resummation} \label{subsec: median resum}

In this section, for our functions $g$ of interest, suppose that $g(z) \sim \varphi_{-d/c}(z)$ as $z$ approaches $-d/c \in \mathbb{Q}$. For appropriate $-d/c$, we will prove decompositions of the form 
\begin{equation}\label{eqn: decomp integral}
g(z)
=  \mathcal{S}^{\pi/2}_{med}(\varphi_{-d/c})(z) 
+  \int_{\Gamma}e^{-u/(z+d/c)}\widehat{g}_{-d/c}^{odd}(u)\frac{du}{\sqrt{u}},
\end{equation}
where $\widehat{g}_{-d/c}^{odd}(u)$ is a function related to $g$ and the contour $\Gamma$ is displayed in Figure \ref{fig: contours}. In the first cases we will consider, we will show that the second term in the sum vanishes, giving a median resummation result for these functions. In the latter cases, the situation is more interesting, as the second term in the sum will contribute. The general approach for the results in this section was inspired by similar work in \cite{gukov-marino-putrov, andersen-mistegard} for false theta functions of weight $1/2$.

\begin{prop}\label{prop: med resum} Suppose $g$ is of Type 4 with respect to $\Gamma_0(N)$ or of Type 5, and suppose $\gamma = \begin{psmallmatrix} a & b \\ c & d\end{psmallmatrix} \in \Gamma_0(N)$. Then $g= \mathcal{S}_{med}^{\pi/2}(\varphi_{-d/c})$.	
\end{prop}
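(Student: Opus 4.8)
\emph{Overall strategy.} The plan is to first prove the decomposition \eqref{eqn: decomp integral} for $g$, and then to show that for \emph{Types 4} and \emph{5} the correction integral over the contour $\Gamma$ of Figure \ref{fig: contours} vanishes identically, so that $g = \mathcal{S}^{\pi/2}_{med}(\varphi_{-d/c})$. Throughout write $w := z + d/c$ for the local coordinate at the cusp $-d/c$, so that the asymptotics of Subsection \ref{subsec: asymptotics} are taken as $w \to 0$ along $\arg w = \pi/2$. In that subsection the Borel transform $\psi(u) := \mathcal{B}_\kappa(\varphi_{-d/c})(u)$ was computed explicitly: it is simple resurgent, its only singularities are simple poles at $\omega_m = 2\pi i m/c^2$ (resp.\ $2\pi i m$ in \emph{Type 5}), and all of these lie on the single Stokes ray $e^{i\pi/2}\mathbb{R}_{+}$, with residues proportional to the Fourier data $a_m m^{1-k}\zeta_c^{am}$ (resp.\ $A_m$). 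Thus $\pi/2$ is precisely the critical direction, and by the formula for $\mathcal{S}^{\theta}_{med}$ recorded in Subsection \ref{subsec: resurgence} the median resummation is the principal-value Borel--Laplace sum along this ray.

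\emph{Step 1: a Gaussian integral representation.} I would first recast $g$ as a genuine integral. For \emph{Type 4} the Gaussian identity $\Gamma(k-1)\,m^{1-k} = \int_0^\infty e^{-mt}t^{k-2}\,dt$ (which for the relevant half-integer exponent becomes an honest Gaussian integral after $t \mapsto s^2$) lets me write $g(z) = \tfrac{1}{\Gamma(k-1)}\int_0^\infty t^{k-2} f\!\left(z + \tfrac{it}{2\pi}\right) dt$, convergent for $z \in \mathbb{H}$; \emph{Type 5} is handled by the analogous representation of the Maass periodic function underlying Proposition \ref{prop: maass discont}. Expanding the Fourier series of $f$, interchanging the (absolutely convergent) sum over $m$ with the $t$-integral, and resumming the resulting geometric series exactly as in Subsection \ref{subsec: asymptotics} converts this into a Laplace integral of the rational kernel $\psi(u)$ along a noncritical ray. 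By Lemma \ref{lem: borel laplace shifts} I may carry out the computation at the shift $\kappa = \tfrac12$, which is the origin of the weight $u^{-1/2}\,du = du/\sqrt{u}$ in \eqref{eqn: decomp integral}.

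\emph{Step 2: rotating onto the Stokes ray.} The ray produced in Step 1 lies in the half-plane where $e^{-u/w}$ decays but is not the critical direction $\pi/2$. Rotating it onto $e^{i\pi/2}\mathbb{R}_{+}$ sweeps across the poles $\omega_m$; the principal-value part of the resulting integral is exactly $\mathcal{S}^{\pi/2}_{med}(\varphi_{-d/c})$, while the swept-up half-residues assemble, via the discontinuity formula $\mathrm{disc}_\kappa^{\pi/2}(\psi) = w^{-\kappa}\sum_m S_{\omega_m}\omega_m^{\kappa-1}e^{-\omega_m/w}$, into a multiple of the slashed form $g|_{2-k}\gamma$. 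Indeed, using $cw = cz+d$ and $\zeta_c^{am}e^{-2\pi i m/(c^2 w)} = e^{2\pi i m\gamma(z)}$, one checks directly that these exponentials reproduce the Fourier expansion of $g(\gamma z)$, so the resurgent data really does reconstruct the modular obstruction. Whatever is not accounted for by the principal-value sum and these residues is collected into the integral of $e^{-u/w}$ against $\widehat{g}_{-d/c}^{odd}(u)/\sqrt{u}$ over $\Gamma$, which is the content of \eqref{eqn: decomp integral}.

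\emph{Step 3 (the main obstacle): vanishing of the correction.} The heart of the proof is to show this leftover integral vanishes for \emph{Types 4} and \emph{5}. By construction $\widehat{g}_{-d/c}^{odd}$ carries none of the poles of $\psi$ — they have all been absorbed into the median sum — so it is holomorphic across $e^{i\pi/2}\mathbb{R}_{+}$ and the two lateral arms of $\Gamma$ can be folded together; the integral is then controlled by the behavior of the integrand under $u \mapsto -u$, and I expect that for these two types the half-integer weight $\kappa$ together with the factor $1/\sqrt{u}$ (and, in \emph{Type 5}, the bilateral symmetry $m \leftrightarrow -m$ of the full sum $\sum_{m\neq 0}$) renders it odd, forcing the integral to zero. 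Pinning down this parity cancellation, while simultaneously justifying the interchange of summation and integration in Step 1 and confirming that the finitely many leading terms split off in Subsection \ref{subsec: asymptotics} contribute no spurious boundary term near $u=0$, is where I expect the real work to lie. It is also exactly the point at which \emph{Types 4} and \emph{5} part ways with the non-cuspidal \emph{Type 6}, whose asymptotic expansion (Proposition \ref{prop: eisenstein asymptotic}) carries a logarithmic term that survives in the analogue of $\widehat{g}_{-d/c}^{odd}$ and makes the correction genuinely nonzero.
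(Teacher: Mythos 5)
There is a genuine gap here, and it is structural rather than cosmetic. Your Step 1 claims that the Gamma-integral representation $g(z)=\tfrac{1}{\Gamma(k-1)}\int_0^\infty t^{k-2}f\!\left(z+\tfrac{it}{2\pi}\right)dt$ can be converted, by expanding the Fourier series and resumming geometric series, into a Laplace integral of $\psi=\mathcal{B}_\kappa(\varphi_{-d/c})$ along a single noncritical ray, i.e.\ that $g=\mathcal{S}^{\theta}(\varphi_{-d/c})$ for some $\theta\neq\pi/2$. No such identity can hold: all poles of $\psi$ lie on the ray $\arg u=\pi/2$, so a lateral sum in any nearby noncritical direction equals $\mathcal{S}^{(\pi/2)^{\pm}}(\varphi_{-d/c})$, and then the median formula from Subsection \ref{subsec: resurgence} together with Proposition \ref{prop: eichler discont} (which says the discontinuity is $2g|_{\kappa}\gamma$) would give $g=\mathcal{S}_{med}^{\pi/2}(\varphi_{-d/c})\mp g|_{\kappa}\gamma$; combined with the proposition you are proving this forces $g|_{\kappa}\gamma=0$, which is false for nonzero cusp forms and Maass forms. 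Relatedly, in Step 2, deforming a contour across simple poles produces \emph{full} residues by the residue theorem, never half-residues; the factor $\tfrac12$ in median resummation comes from averaging two lateral sums by definition, not from rotating a contour. The underlying problem is that the exponentials $e^{-mt}$ in the Gamma integral do not couple $m$ to the Borel variable correctly. The paper instead uses the genuinely Gaussian identity $e^{2\pi i n w}=\tfrac{1}{\sqrt{\pi}\sqrt{8\pi i w}}\int_{i\mathbb{R}+\varepsilon}e^{-\sqrt{n}\eta}e^{-\eta^2/8\pi i w}\,d\eta$ (with $w=z+d/c$), whose \emph{linear} coupling of $\sqrt{n}$ to $\eta$ produces the kernel $\widehat{g}_{-d/c}(u)=\tfrac{1}{\sqrt\pi}\sum_n b_n\zeta_c^{-dn}e^{-\sqrt{8\pi i n u}}$; splitting its Puiseux expansion into even and odd powers of $\sqrt{u}$ yields Equation \ref{eqn: decomp integral} directly, the even part being exactly $\mathcal{B}_{1/2}(\varphi_{-d/c})$, whose $\Gamma$-integral is the median sum because $du/\sqrt{u}$ changes sign around the origin and so $\Gamma$ unfolds into the \emph{sum}, not the difference, of the two lateral rays.

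Step 3 is also not the actual mechanism: the vanishing of the odd-part integral is not a parity cancellation under $u\mapsto-u$. For \emph{Type 4}, $\widehat{g}^{odd}_{-d/c}(u)/\sqrt{u}$ is a \emph{polynomial}, because $\Lambda(f,\zeta_c^{-d};\rho)$ is entire and the poles of $\Gamma(\rho)$ force the trivial zeros $L(f,\zeta_c^{-d};k-1-r/2)=0$ for odd $r\geq 2k-2$; the integral then dies by closing the contour with a large arc. For \emph{Type 5} the odd kernel is an honest infinite series with poles, and the proof requires realizing it as $\mathcal{B}_1(\Phi)$, passing to the shift $3/2$ via Lemma \ref{lem: borel laplace shifts}, and using the functional equations of $L_0$ and $L_1$ — the sign $(-1)^\varepsilon$ is crucial, since $L_0-L_1$ picks out the coefficients $A_{-m}$ — to show that $\mathcal{B}_{3/2}(\Phi)$ has poles only at $-2\pi i m$, in the \emph{lower} half-plane. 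Folding the two arms of $\Gamma$ (legitimate here since this integrand is single-valued) gives the difference of two lateral Laplace transforms across $\pi/2$, which vanishes precisely because no singularities lie near the upper ray, not because of oddness. Your remark about the $m\leftrightarrow -m$ structure gestures at the right data, but without the shift change and the pole-location computation the vanishing is unproven. Finally, a small correction: for \emph{Type 6} the odd-part integral also vanishes by the same argument; what survives there is the logarithmic term split off \emph{before} the even/odd decomposition (Proposition \ref{prop: eisenstein med}), not a nonzero odd part.
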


\begin{proof}
Let $g(z)= \sum_{n=0}^\infty b_n q^n$ and suppose that $g(z) \sim \varphi_{-d/c}(z)$ as $z$ approaches $-d/c \in \mathbb{Q}$.
Now define $\widehat{g}_{-d/c}(u) := \frac{1}{\sqrt{\pi}}\sum_{n =0}^\infty b_n \zeta_c^{-dn}  e^{-\sqrt{8\pi i  nu}}$. We will relate $g$ and $\widehat{g}_{-d/c}$ through Borel resummation.

We will prove our result for $z+d/c$ along the half-line $i\mathbb{R}_+$ and deduce the equalities in Propositions \ref{prop: med resum} on $\mathbb{H}$ by holomorphicity. Using Gaussian integration, we have that for $\varepsilon >0$,
\[e^{2\pi i n (z+d/c)} = \frac{1}{\sqrt{\pi}\sqrt{8\pi i (z+d/c)}} \int_{i\mathbb{R}+\varepsilon}e^{-\sqrt{n}\eta} e^{-\eta^2/8\pi i (z+d/c)}\;d\eta.
\]
Then we may write 
\begin{align*}
\sum_{n=1}^\infty b^n q^n &= \sum_{n = 1}^\infty b^n \zeta_c^{-dn} e^{2\pi i (z+d/c)} \\&=  \frac{1}{\sqrt{\pi}\sqrt{8\pi i (z+d/c)}}\int_{i\mathbb{R}+\varepsilon}\sum_{n \geq 1} b_n\zeta_c^{-dn} e^{-\sqrt{n}\eta} e^{-\eta^2/8\pi i (z+d/c)}\;d\eta.
\end{align*}
Under the change of variables $\eta^2/8\pi i =u$, this becomes
\[\sum_{n = 1}^\infty b_n q^n =  \frac{1}{2\sqrt{z+d/c}}\int_{\Gamma}e^{-u/(z+d/c)} \sum_{n = 1}^\infty b_n\zeta_c^{-dn} e^{-\sqrt{8\pi i nu}} \;\frac{du}{\sqrt{\pi u}},
\]
for the contour $\Gamma$ displayed in Figure \ref{fig: contours}.
\begin{figure}
	\begin{tikzpicture}
  \draw[-] (-2,0) -- (2,0); 
  \draw[-] (0,-1) -- (0,3);  
  \draw[thick, black, 
        postaction={decorate},
        decoration={
          markings,
          mark=at position 0.215 with {\arrow{>}},
          mark=at position 0.8 with {\arrow{>}}
        }]
    plot[domain=-.58:.58, samples=200] 
    (\x, {20*\x*\x*\x*\x + 2*\x*\x- .25});
\end{tikzpicture}
	\caption{The contour $\Gamma$.}
  \label{fig: contours}
\end{figure}
Expanding $\widehat{g}_{-d/c}(u)$ as a Puiseux series supported on half-integer powers, we have 
\begin{equation}\label{egn: g hat asymptotic} 
\widehat{g}_{-d/c}(u) = \frac{1}{\sqrt{\pi}}\sum_{n \geq 1} b_n\zeta_c^{-dn} e^{-\sqrt{8\pi i nu}} \sim \frac{1}{\sqrt{\pi }} \sum_{r=0}^\infty \frac{(-1)^r L(g, \zeta_c^{-d};-r/2)}{\Gamma(r+1)}(8\pi iu)^{r/2}.
\end{equation}
We split this series based on whether $r$ is even or odd. We write
\[
\widehat{g}_{-d/c}^{even}(u) := \frac{1}{\sqrt{\pi }} \sum_{n=0}^\infty \frac{ L(g, \zeta_c^{-d};-n)}{\Gamma(2n+1)}(8\pi iu)^{n};
\]
\[
\frac{1}{\sqrt{u}}\widehat{g}_{-d/c}^{odd}(u) := -\frac{1}{\sqrt{\pi }} \sum_{n=0}^\infty \frac{ L(g, \zeta_c^{-d};-n-1/2)}{\Gamma(2n+2)}(8\pi i)^{n+1/2}u^{n}.
\]
Since in both cases, we assume a functional equation of degree 2 for some $L$-function related to $L(g, \zeta_c^{-d}; \rho)$ by a shift, both of the above power series converge absolutely in some neighborhood of $u=0$. 
Moreover, since $\Gamma(n+1)\Gamma(n+1/2) = \frac{\sqrt{\pi}}{4^n} \Gamma(2n+1)$, we see that 
\[
\mathcal{B}_{1/2}(\varphi_{-d/c}(z)) = \frac{1}{\sqrt{\pi }} \sum_{n=0}^\infty \frac{ L(g, \zeta_c^{-d};-n)}{\Gamma(2n+1)}(8\pi iu)^{n} = \widehat{g}_{-d/c}^{even}(u).
\]
Then the above allows us to write
\begin{equation*}
\sum_{n = 1}^\infty b_n q^n =  \frac{1}{2\sqrt{z+d/c}} \left( \int_{\Gamma}e^{-u/(z+d/c)} \mathcal{B}_{1/2}(\varphi_{-d/c}(z)) \frac{du}{\sqrt{\pi u}} \ + \ \int_{\Gamma}e^{-u/(z+d/c)}\widehat{g}_{-d/c}^{odd}(u)\frac{du}{\sqrt{u}}\right).
\end{equation*}
By deforming contours, we have
\[
\int_\Gamma e^{-u/(z+d/c)}\mathcal{B}_{1/2}(\varphi_{-d/c})\;\frac{du}{\sqrt{\pi u}} = \left(\int_0^{e^{i(\frac{\pi}{2}-\varepsilon)}\infty}+ \int_0^{e^{i(\frac{\pi}{2}+\varepsilon)}\infty}\right) e^{-u/(z+d/c)}\mathcal{B}_{1/2}(\varphi_{-d/c})\;\frac{du}{\sqrt{\pi u}},
\]	
 noting that the integrand changes sign when $u$ circles the origin. This shows that this integral is the median resummation with $\varphi_{-d/c}$ of angle $\pi/2$. 

To prove Proposition \ref{prop: med resum}, it then suffices to show that in the given cases,
\begin{equation}\label{eqn: 0 integral}
\frac{1}{2\sqrt{z+d/c}}\int_{\Gamma}e^{-u/(z+d/c)}\widehat{g}_{-d/c}^{odd}(u)\frac{du}{\sqrt{u}} = 0.
\end{equation}
We will treat both cases separately. 

First, suppose that $g$ is the Eichler integral of a cusp form $f$ of weight $k \in \frac{1}{2}+\mathbb{Z}$ on $\Gamma_0(N)$ where $4 \mid N$. Then $b_n = a_n n^{1-k}$ for $f(z) = \sum_{n=1}^\infty a_nq^n$. 
Considering equation (\ref{egn: g hat asymptotic}), note that for $r/2 \geq k-1$ and $r$ odd, $L(f, \zeta_c^{-d};k-1-r/2)$ vanishes due to the poles of $\Gamma(\rho)$ and the fact that $\Lambda(f, \zeta_c^{-d};\rho)$ is entire. This means that 
\[
\frac{1}{\sqrt{u}}\widehat{g}_{-d/c}^{odd}(u) = -\frac{1}{\sqrt{\pi }} \sum_{n=0}^{k-5/2} \frac{ L(g, \zeta_c^{-d};-n-1/2)}{\Gamma(2n+2)}(8\pi i)^{n+1/2}u^{n},
\]
which is just a polynomial in $u$. Then, we claim that \[\int_{\Gamma} e^{-u/(z+d/c)}\left( \sum_{n=0}^{k-5/2} \frac{L(g, \zeta_c^{-d};-n-1/2)}{\Gamma(2n+2)}(8\pi i)^{n+1/2}u^{n}\right)\;du = 0.\]
To see this, one considers an arc of radius $R>0$ which connects two points of $\Gamma$ to form an oriented closed curve. The integral along this entire curve must be 0 by the residue theorem, and as $R$ tends to infinity, the integral along the arc approach zero since the expression in brackets is a polynomial and $e^{-u/(z+d/c)}$ decays exponentially as $\Im(u) \to \infty$ (recalling that $z+d/c \in i\mathbb{R}_+)$.

Now suppose that $g$ is of \emph{Type 5}. We consider only $z \to 0$. Showing Equation \ref{eqn: 0 integral} will require more work in this case. In order to show that the integral vanishes, we realize $\frac{1}{2\sqrt{u}}\widehat{g}^{odd}_{-d/c}$ as a Borel transform of another asymptotic series. Taking the Borel transform with a different shift results in a much simpler function which is easily shown to be holomorphic on $\mathbb{H}$. This, coupled with Lemma \ref{lem: borel laplace shifts}, is enough to prove our claim. 

Recalling the definitions of $L_0$ and $L_1$ from Section \ref{subsec: maass}, we have 
\[L_0(-n-1/2) =\frac{1}{\pi^{2n+3} \cdot 2^{2n+2}} \Gamma(n+3/2)^2L_0(n+3/2);
\]
\[L_1 (-n-1/2) = - \frac{1}{\pi^{2n+3} \cdot 2^{2n+2}} \Gamma(n+3/2)^2L_1(n+3/2).
\]

Returning to $\widehat{g}_0^{odd}$ in this new case, we have
\[\frac{1}{2\sqrt{u}}\widehat{g}_0^{odd}(u) = -\frac{1}{2\sqrt{\pi}}\sum_{n=0}^\infty \frac{(8\pi i)^{n+1/2}}{\pi^{2n+3} \cdot 2^{2n+2}} \frac{\Gamma(n+3/2)^2}{\Gamma(2n+2)} \left(L_0(n+3/2)-L_1(n+3/2)\right) u^n. 
\]
Noting that $\Gamma(2n+2) = \frac{2^{2n+1}}{\sqrt{\pi}} \Gamma(n+1)\Gamma(n+3/2)$, we can write this as 
\[ -\sum_{n=0}^\infty \frac{1}{\pi^{2n+3} \cdot 2^{2n+3}} \frac{\Gamma(n+3/2)}{\Gamma(n+1)} \left(L_0(n+3/2)-L_1(n+3/2)\right)(2\pi i)^{n+1/2} u^n .
\]

Now we define the function
\[\Phi(z) := -\sum_{n=0}^\infty \frac{1}{\pi^{2n+3} \cdot 2^{2n+3}} \Gamma(n+3/2) \left(L_0(n+3/2)-L_1(n+3/2)\right)(2\pi i)^{n+1/2} z^n.\]
Then $\mathcal{B}_1(\Phi(z)) = \frac{1}{2\sqrt{u}}\widehat{g}_0^{odd}(u)$. 
Let us instead consider $\mathcal{B}_{3/2}(\Phi(z))$. This is equal to 
\[-\sum_{n=0}^\infty \frac{1}{\pi^{2n+3} \cdot 2^{2n+3}} \left(L_0(n+3/2)-L_1(n+3/2)\right)(2\pi i)^{n+1/2} u^n.\]

From here, we compute
\begin{align*}
		\mathcal{B}_{3/2}(\Phi(z)) = &-\sum_{n=0}^\infty \frac{(2\pi i)^{n+1/2}}{\pi^{2n+3}2^{2n+3}} \left[ \sum_{m=1}^\infty \frac{A_{-m}}{m^{n+3/2}}\right]u^{n}\\
	= &-\frac{1}{2\pi i}\sum_{n=0}^\infty \left[ \sum_{m=1}^\infty \frac{A_{-m}}{(-2\pi im)^{n+3/2}}\right]u^{n}\\
	= &-\frac{1}{2\pi i}\sum_{m=1}^\infty \frac{A_{-m}}{(-2\pi i m)^{3/2}} \left[ \sum_{n=0}^\infty \frac{u^n}{(-2\pi im)^{n}}\right]\\
	= &\frac{1}{2\pi i}\sum_{m=1}^\infty \frac{A_{-m}}{(-2\pi i m)^{1/2}} \left( \frac{1}{u-(-2\pi i m)}\right)\\
\end{align*}
This function has only simple poles at points $-2\pi i m$ for $m \in \mathbb{N}$. Importantly, the poles are \textit{not} in the upper half-plane. 
In particular, we have that 
\[\mathcal{L}_{3/2}^{\pi/2-\varepsilon}(\mathcal{B}_{3/2}(\Phi(z)) = \mathcal{L}_{3/2}^{\pi/2+\varepsilon}(\mathcal{B}_{3/2}(\Phi(z))
\]
for $ 0< \varepsilon < \pi/2$. 
Then by Lemma \ref{lem: borel laplace shifts}, we have that 
\[\mathcal{L}_{1}^{\pi/2-\varepsilon}(\mathcal{B}_{1}(\Phi(z)) = \mathcal{L}_{3/2}^{\pi/2-\varepsilon}(\mathcal{B}_{3/2}(\Phi(z)) = \mathcal{L}_{3/2}^{\pi/2+\varepsilon}(\mathcal{B}_{3/2}(\Phi(z)) = \mathcal{L}_{1}^{\pi/2+\varepsilon}(\mathcal{B}_{1}(\Phi(z)).
\]

This is exactly what we want. To see this, consider again
\[\frac{1}{2}\int_{\Gamma}e^{-u/z}\widehat{g}_{0}^{odd}(u)\frac{du}{\sqrt{u}}.\]
For $\varepsilon$ sufficiently small, the contour $\Gamma$ in Figure \ref{fig: contours} is equivalent to a median resummation contour for $\theta=\pi/2$ since our function is analytic in a disc around the origin.
From here, we have that 

\begin{align*}\frac{1}{2}\int_\Gamma e^{-u/z} \widehat{g}_{0}^{odd}(u) \;\frac{du}{\sqrt{u}} &= \int_\Gamma e^{-u/z} \mathcal{B}_1(\Phi(z))\;du
\\
&= \left(\int_0^{e^{i(\frac{\pi}{2}-\varepsilon)}\infty}- \int_0^{e^{i(\frac{\pi}{2}+\varepsilon)}\infty}\right) e^{-u/z} \mathcal{B}_1(\Phi(z)) \;du \\
&=  z\left(\mathcal{L}_{3/2}^{\pi/2-\varepsilon}\mathcal{B}_{3/2}(\Phi(z)) - \mathcal{L}_{3/2}^{\pi/2+\varepsilon}\mathcal{B}_{3/2}(\Phi(z))\right) = 0. 
\end{align*}
Now we have shown that the second integral in Equation \ref{eqn: decomp integral} vanishes, proving that $g(z)
=  \mathcal{S}^{\pi/2}_{med}(\varphi_{-d/c})(z)$ as desired.
\qedhere

\end{proof}

In the case where $g$ is of \emph{Type 6}, we modify the argument slightly and end up with a similar result. 
\begin{prop} \label{prop: eisenstein med}
If $g$ is of Type 6, then \[g= \mathcal{S}_{med}^{\pi/2}(\varphi_{0}) +\frac{1}{2\sqrt{z}}\int_\Gamma e^{-u/z} \left( \frac{-4+2\log(8\pi i u)}{2\pi i u} \right)\frac{du}{\sqrt{\pi u}}.\]	
\end{prop}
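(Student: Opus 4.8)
The plan is to rerun the proof of Proposition~\ref{prop: med resum} with the divisor coefficients, peeling off the extra logarithmic term forced by the pole of $\zeta^2$. Write $g(z)=\sum_{n\ge 0}b_nq^n$ with $b_0=1$ and $b_n=-4d(n)$ for $n\ge 1$. Applying the Gaussian integration identity termwise---now \emph{keeping} the $n=0$ summand, for which the identity degenerates to the Fresnel--Gaussian evaluation of $\int_{i\mathbb{R}+\varepsilon}e^{-\eta^2/8\pi iz}\,d\eta$---and substituting $\eta^2/8\pi i=u$, I would obtain
\[
g(z)=\frac{1}{2\sqrt z}\int_\Gamma e^{-u/z}\,\widehat g_0(u)\,\frac{du}{\sqrt u},\qquad \widehat g_0(u):=\frac{1}{\sqrt\pi}\Big(1-4\sum_{n\ge 1}d(n)\,e^{-\sqrt{8\pi i n u}}\Big).
\]
Everything is then reduced to the Puiseux expansion of $\widehat g_0$ at $u=0$ and to integrating its pieces against the kernel above.

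The genuinely new ingredient is that, unlike in Proposition~\ref{prop: med resum}, the asymptotics of $\sum_{n\ge 1}d(n)e^{-\sqrt n t}$ as $t\to 0^+$ carry a logarithm. I would extract these from the Mellin transform $\Gamma(\rho)\zeta^2(\rho/2)$ by shifting the contour, exactly as in Proposition~\ref{prop: eisenstein asymptotic}: the \emph{double} pole of $\zeta^2(\rho/2)$ at $\rho=2$ yields the singular part, which under $t=\sqrt{8\pi i u}$ becomes precisely $\tfrac{-4+2\log(8\pi i u)}{2\pi i u}$; the \emph{simple} pole of $\Gamma$ at $\rho=0$ yields a constant that exactly cancels the $b_0=1$ term; and the poles at $\rho=-1,-2,\dots$ give a convergent Puiseux series in powers of $\sqrt u$. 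This produces a splitting $\widehat g_0=\widehat g_0^{\mathrm{sing}}+\widehat g_0^{\mathrm{even}}+\widehat g_0^{\mathrm{odd}}$ with $\widehat g_0^{\mathrm{sing}}=\tfrac{1}{\sqrt\pi}\cdot\tfrac{-4+2\log(8\pi i u)}{2\pi i u}$. Because $\zeta$ vanishes at the negative even integers, the integer-power (``even'') part carries only the odd powers $u,u^3,\dots$ and equals $\mathcal{B}_{1/2}(\varphi_0)$, matching the fact that $\varphi_0$ has only odd powers of $z$.

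It remains to integrate each piece against $\tfrac{1}{2\sqrt z}\int_\Gamma e^{-u/z}(\,\cdot\,)\tfrac{du}{\sqrt u}$. The singular piece reproduces the stated extra integral verbatim, since $\tfrac{1}{\sqrt\pi}\tfrac{du}{\sqrt u}=\tfrac{du}{\sqrt{\pi u}}$. The even piece reproduces $\mathcal{S}_{med}^{\pi/2}(\varphi_0)$ by the same contour-deformation and sign-change computation as in Proposition~\ref{prop: med resum}, where $\mathcal{B}_{1/2}(\varphi_0)$ has poles on the ray $\arg u=\pi/2$, so the median average is nontrivial. For the odd piece I would follow the \emph{Type 5} argument: write $\tfrac{1}{2\sqrt u}\widehat g_0^{\mathrm{odd}}=\mathcal{B}_1(\Phi)$, use the functional equation of $\zeta^2$ to re-express its coefficients through $\zeta^2(n+3/2)$, and check that the shift-$3/2$ transform $\mathcal{B}_{3/2}(\Phi)$ is a rational series with poles at $-2\pi i m$ ($m\ge 1$), strictly off the ray $\arg u=\pi/2$; Lemma~\ref{lem: borel laplace shifts} then forces the two directional Laplace transforms to agree, so the odd integral vanishes.

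I expect the main obstacle to be the bookkeeping around the logarithm: computing the $\rho=2$ double-pole residue with the correct constant and the correct branch of $\log(8\pi i u)$, and justifying that the non-convergent singular part may be separated off and integrated on its own along $\Gamma$ (the factor $e^{-u/z}$ decays along $\Gamma$ for $z\in i\mathbb{R}_+$, taming both the $u=0$ pole and the branch cut). A secondary delicate point is the sign in the odd-part analysis---confirming via the functional equation of $\zeta^2$ that the poles of $\mathcal{B}_{3/2}(\Phi)$ lie in the lower half-plane rather than on the summation ray, which is exactly what guarantees that nothing survives beyond the median resummation of $\varphi_0$ and the logarithmic integral.
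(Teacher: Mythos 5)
Your proposal is correct and takes essentially the same route as the paper's proof: the same Gaussian-integration contour representation (with the $n=0$ term retained), the same Mellin-transform extraction of the logarithmic singular part plus the Puiseux series, the identification of the even part with $\mathcal{B}_{1/2}(\varphi_0)$ and hence with $\mathcal{S}_{med}^{\pi/2}(\varphi_0)$, and the vanishing of the odd part by the \emph{Type 5} shifted-Borel argument via Lemma \ref{lem: borel laplace shifts}. Your write-up in fact makes explicit several points the paper leaves implicit, notably the cancellation of $b_0=1$ against the residue of $\Gamma(\rho)\zeta^2(\rho/2)$ at $\rho=0$ and the justification for integrating the singular term separately along $\Gamma$.
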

\begin{proof}
We write 
\[1-4\sum_{n = 1}^\infty d(n) q^n =  \frac{1}{2\sqrt{z}}\int_{\Gamma}e^{-u/z} \left( 1-4\sum_{n = 1}^\infty d(n) e^{-\sqrt{8\pi i nu}}\right) \;\frac{du}{\sqrt{\pi u}},
\]
and using the Mellin transform again we rewrite the inner sum as  
\begin{equation*}  - \frac{4-2\log(8\pi i u)}{2\pi i u} - \frac{4}{\sqrt{\pi }} \sum_{r=1}^\infty \frac{(-1)^r \zeta^2(-r/2)}{\Gamma(r+1)}(8\pi iu)^{r/2},
\end{equation*}
Noting that the calculation for the pole at $s=2$ differs slightly.
Leaving the leading term, we again split the sum into the cases where $r$ is even or odd to get 
\[\widehat{g}_{0}^{even}(u) := \frac{-4}{\sqrt{\pi }} \sum_{n=0}^\infty \frac{ \zeta^2(-n)}{\Gamma(2n+1)}(8\pi iu)^{n};
\]
\[\frac{1}{\sqrt{u}}\widehat{g}_{0}^{odd}(u) := \frac{4}{\sqrt{\pi }} \sum_{n=0}^\infty \frac{ \zeta^2(-n-1/2)}{\Gamma(2n+2)}(8\pi i)^{n+1/2}u^{n}.
\]
As before, we have that $\widehat{g}_{0}^{even}(u) = \mathcal{B}_{1/2}(\varphi_0)$, where $\varphi_0$ was defined for $g$ in the proof of Proposition \ref{prop: eisenstein asymptotic}. The proof that the integral of $\widehat{g}_{0}^{odd}(u)$ vanishes is the same as in Proposition \ref{prop: med resum} when $g$ is of \emph{Type 5}. 
After appropriately rescaling to get from $\sum_{n=1}^\infty d(n)q^n$ to $g$, we get the desired result. 
\end{proof}

Note that in the cases where $g$ is of \emph{Type 1 , 2,} or \emph{3}, one can argue the same way as in Proposition \ref{prop: med resum} to get the same decomposition as in 
 Equation \ref{eqn: decomp integral}. However, in these cases, the second integral no longer vanishes. Moreover, we know that when $g$ is of \emph{Type 1} or \emph{2},  $\mathcal{B}_{1/2}(\varphi_{-d/c}(z))$ is zero, while for $g$ of \emph{Type 3}, it is a polynomial. So we have that the first integral satisfies
\begin{align*}&\frac{1}{2\sqrt{z+d/c}} \int_{\Gamma}e^{-u/(z+d/c)} \mathcal{B}_{1/2}(\varphi_{-d/c}(z)) \frac{du}{\sqrt{\pi u}}\\&= \frac{1}{\sqrt{z+d/c}} \int_{0}^{\infty}e^{-u/(z+d/c)} \mathcal{B}_{1/2}(\varphi_{-d/c}(z)) \frac{du}{\sqrt{\pi u}},\end{align*}
which up to prefactors is the Borel-Laplace sum of $\varphi_{-d/c}$ in direction 0. 

\subsection{Discontinuity Calculations} \label{subsec: discontinuity} Here we compute the discontinuities of the Borel-Laplace sums of the functions of \emph{Types 4} and \emph{5} along the ray of angle $\pi/2$. 

\begin{prop}\label{prop: eichler discont}
	If $g$ is of Type 4 with $2-k= \kappa$ or of Type 5 with $s = \kappa/2$, then  for $\gamma = \begin{psmallmatrix} a & b \\ c & d \end{psmallmatrix} \in \Gamma_0(N)$, we have that 
 $2g|_{\kappa}\gamma(z) = \emph{disc}_\kappa^{\pi/2}(\mathcal{B}_\kappa(\varphi_{-d/c}(z))$.
\end{prop}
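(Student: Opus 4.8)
The plan is to read off the Stokes data of $\mathcal{B}_\kappa(\varphi_{-d/c})$ from the pole expansions already computed in Subsection 3.1, substitute these into the discontinuity formula, and then recognize the resulting exponentially small series as the slash of $g$. First I would convert the residues recorded in Subsection 3.1 into Stokes constants via the convention $\psi(u) = -\frac{S_\omega}{2\pi i(u-\omega)} + F(u)$, so that $S_\omega = -2\pi i\,\operatorname{Res}_{u=\omega}\psi$. For \emph{Type 4} this produces simple poles at $\omega_m = 2\pi i m/c^2$ with $S_{\omega_m} = 2\,\varepsilon_d^{2k}\!\left(\tfrac{c}{d}\right)c^{k-2}a_m m^{1-k}\zeta_c^{am}\omega_m^{k-1}$, and for \emph{Type 5} (where $\kappa=2s=1$) poles at $\omega_m = 2\pi i m$ with $S_{\omega_m} = 2A_m$. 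In both cases the poles lying on the ray of argument $\pi/2$ are exactly those indexed by $m\geq 1$, so $\Omega_{\pi/2} = \{\omega_m : m\geq 1\}$.

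Next I would substitute into $\operatorname{disc}_\kappa^{\pi/2}(\psi) = w^{-\kappa}\sum_{\omega\in\Omega_{\pi/2}} S_\omega\,\omega^{\kappa-1}e^{-\omega/w}$ with $w=z+d/c$. The key simplification is that the two powers of $\omega_m$ collapse: in \emph{Type 4} the relation $k+\kappa=2$ forces $\omega_m^{k-1}\omega_m^{\kappa-1}=1$, and in \emph{Type 5} the value $\kappa=1$ makes $\omega_m^{\kappa-1}=1$. I would then rewrite the exponential using the cusp identity $\gamma z = \tfrac{a}{c} - \tfrac{1}{c(cz+d)}$; since $c^2 w = c(cz+d)$, this gives $e^{-\omega_m/w} = \zeta_c^{-am}e^{2\pi i m\gamma z}$. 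The phase $\zeta_c^{-am}$ cancels the $\zeta_c^{am}$ carried by the Stokes constant (in \emph{Type 5} there is no phase to cancel), and the surviving series is $\sum_{m\geq 1}a_m m^{1-k}e^{2\pi i m\gamma z} = g(\gamma z)$, respectively $\sum_{m\geq 1}A_m e^{2\pi i m\gamma z} = g(\gamma z)$, using $s=1/2$ so that $m^{s-1/2}=1$.

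Finally I would collect the prefactors. Writing $w^{-\kappa} = c^{\kappa}(cz+d)^{-\kappa} = c^{2-k}(cz+d)^{-\kappa}$ cancels the power $c^{k-2}$ in the \emph{Type 4} Stokes constant, leaving $2\,\varepsilon_d^{2k}\!\left(\tfrac{c}{d}\right)(cz+d)^{-\kappa}g(\gamma z)$; for \emph{Type 5} one obtains $2(cz+d)^{-\kappa}g(\gamma z)$ directly. Comparing with the weight-$\kappa$ slash of Equation \ref{eqn: slash} identifies these as $2g|_\kappa\gamma(z)$, where in the half-integer \emph{Type 4} case the multiplier matches after the elementary identity $\varepsilon_d^{2k} = \chi_{-4}(d)\,\varepsilon_d^{2\kappa}$ (valid since $k+\kappa=2$, $2k$ is odd, and $\varepsilon_d^2 = \chi_{-4}(d)$), with $\chi_{-4}(d)$ absorbed into the twisted slash used throughout. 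I expect the main obstacle to be the bookkeeping of the multiplier system and phases, namely ensuring that the $\varepsilon_d$-factor, the Kronecker symbol $\left(\tfrac{c}{d}\right)$, and the root-of-unity phase $\zeta_c^{am}$ assemble correctly into the slash operator, together with justifying the termwise manipulation of the absolutely convergent, exponentially small series defining the discontinuity.
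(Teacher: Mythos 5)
Your proposal is correct and follows essentially the same route as the paper's proof: read the poles and Stokes constants of $\mathcal{B}_\kappa(\varphi_{-d/c})$ off the computations of Subsection \ref{subsec: asymptotics} (the finite polynomial piece $\widehat{\varphi}^{\ell}_{-d/c}$ being entire and hence invisible to the discontinuity), insert them into $\mathrm{disc}_\kappa^{\pi/2}$, and convert $e^{-\omega_m/(z+d/c)}$ into $\zeta_c^{-am}e^{2\pi i m \gamma z}$ via the cusp identity so the phases cancel and the slash emerges. Your multiplier bookkeeping, using $\varepsilon_d^{2k}=\chi_{-4}(d)\varepsilon_d^{2\kappa}$ to reconcile the computed factor with the $\chi_{-4}(d)$-twisted slash appearing in Corollary \ref{cor: eichler}, is in fact more explicit than the paper's, which makes the final identification $\varepsilon_d^{2k}\left(\tfrac{c}{d}\right)(cz+d)^{k-2}g(\gamma z)=g|_{2-k}\gamma(z)$ without comment.
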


\begin{proof}
We begin with $g$ of \emph{Type 4} and $\widehat{\varphi}_{-d/c}^{\ell}(z)$ as defined in 
Section \ref{subsec: asymptotics}. Note that $\mathcal{S}_1^{\pi/2 - \varepsilon}(\widehat{\varphi}_{-d/c}^{\ell})=\mathcal{S}_1^{\pi/2 + \varepsilon}(\widehat{\varphi}_{-d/c}^{\ell})$ for $0< \varepsilon< \pi/2$ since $\widehat{\varphi}_{-d/c}^{\ell}$ is a polynomial. Therefore, these terms cannot contribute to the discontinuity. 

By the computations in Section \ref{subsec: asymptotics}, we also have that $\mathcal{B}_{2-k}(\varphi^\ell_{-d/c}(z))$ has simple poles at $\omega_m = \frac{2\pi i m}{c^2}$ with residues $-2 \frac{\varepsilon_d^{2k} \left( \frac{c}{d}\right)}{c^{2-k}}  a_m m^{1-k}\zeta_c^{am}\omega_m^{k-1}$.
Then
\begin{align*}\text{disc}_{2-k}^{\pi/2}(\mathcal{B}_{2-k}(\varphi_{-d/c}(z)) &= (z+d/c)^{k-2}\sum_{\omega \in \Omega_\theta} S_\omega \omega^{1-k} e^{-\omega/(z+d/c)}\\
	&= 2 \frac{\varepsilon_d^{2k} \left( \frac{c}{d}\right)}{c^{2-k}} (z+d/c)^{k-2}\sum_{n =1}^\infty    a_m m^{1-k}\zeta_c^{am} e^{-2\pi i m/(c^2(z+d/c))}\\
	&= 2(cz+d)^{k-2}\varepsilon_d^{2k} \left( \frac{c}{d}\right)\sum_{n =1}^\infty    a_m m^{1-k} e^{2\pi i m\left(\frac{az+b}{cz+d}\right)} \\
	&= 2g|_{2-k}\gamma(z),
\end{align*}
where the second to last equality follows from the identity  $\frac{1}{c^2(\tau+d/c)} = \frac{a}{c} - \frac{a\tau+ b}{c\tau +d}$.
Turning to \emph{Type 5}, we have in this case that
\[\text{disc}_\kappa^{\pi/2}(\mathcal{B}_\kappa(\varphi_{0}(z)) = z^{-1}\sum_{\omega \in \Omega_\theta} S_\omega e^{-\omega/z} =2 z^{-1} \sum_{n=1}^\infty 	A_n e^{-2\pi i n/z} = 2g|_{1}(S)(z). \qedhere
\] 
\end{proof}

Using the analogous result for the Eisenstein case given in Proposition \ref{prop: eisenstein asymptotic}, we can also prove the following. 
\begin{prop}
	If $g$ is of Type 6, then  for $\gamma = \begin{psmallmatrix} a & b \\ c & d \end{psmallmatrix} \in \Gamma_0(N)$, we have that 
 $2g|_{1}\gamma(z) = \emph{disc}_1^{\pi/2}(\mathcal{B}_1(\varphi_{0}(z))$.
\end{prop}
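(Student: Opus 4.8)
The plan is to mirror the \emph{Type 5} computation in Proposition~\ref{prop: eichler discont}, now feeding in the singularity data of $\mathcal{B}_1(\varphi_0)$ recorded immediately after Proposition~\ref{prop: eisenstein asymptotic}. There we found that $\mathcal{B}_1(\varphi_0)$ continues meromorphically to $\mathbb{C}$ with simple poles at $\omega_m = 2\pi i m$, $m \in \mathbb{Z}_{\neq 0}$, and residues $8 d(|m|)$, from which the Stokes constants $S_{\omega_m}$ are read off exactly as in the cuspidal case. The relevant Stokes ray is again $\arg u = \pi/2$, and the poles lying on it are precisely the $\omega_m$ with $m \geq 1$ (those with $m<0$ sit on the opposite ray). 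Since $E_{1/2}$ lives on $\mathrm{SL}_2(\mathbb{Z})$ and the periodic function is $T$-invariant, the obstruction for $T \colon z \mapsto z+1$ vanishes, so it suffices to treat $\gamma = S = \begin{psmallmatrix} 0 & -1 \\ 1 & 0 \end{psmallmatrix}$, i.e.\ the expansion as $z \to 0$ for which $\varphi_0$ was computed.

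First I would substitute this data into the discontinuity formula $\mathrm{disc}_1^{\pi/2}(\psi) = z^{-1}\sum_{\omega \in \Omega_{\pi/2}} S_\omega\, e^{-\omega/z}$. The sum over the simple poles yields $z^{-1}\sum_{m \geq 1} S_{\omega_m} e^{-2\pi i m/z} = -8 z^{-1}\sum_{m \geq 1} d(m)\, e^{-2\pi i m/z}$, which is $2z^{-1}$ times the non-constant part of $g(-1/z)$. This recovers the $q$-expansion piece of $2 g|_1 S$, in direct analogy with \emph{Type 5}, where the pole sum already reproduced all of $2 g|_1(S)$.

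The main obstacle is the ``mild extra consideration'' forced by non-cuspidality: the constant term of $g(z) = 1 - 4\sum_{n \geq 1} d(n) q^n$. Under the weight-$1$ slash one has $2 g|_1 S(z) = 2z^{-1} - 8 z^{-1}\sum_{m \geq 1} d(m)\, e^{-2\pi i m/z}$, so the pole sum alone falls short of the claim by exactly $2z^{-1}$. I expect this gap to come from the behavior of $\mathcal{B}_1(\varphi_0)$ at the origin: unlike the rapidly decaying cuspidal coefficients, the divisor function makes $\frac{4}{\pi i}\sum_{m \neq 0} \frac{d(|m|)}{u - 2\pi i m}$ only meaningful after analytic continuation, and that continuation inherits a logarithmic branch point at $u=0$ --- the common endpoint of every Stokes ray --- from the leading term $-8(\log(-2\pi i z)-\gamma)/(2\pi i z)$ of Proposition~\ref{prop: eisenstein asymptotic}. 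Crossing $\arg u = \pi/2$ thus contributes, in addition to the pole sum, an origin term proportional to $z^{-1} e^{-0/z} = z^{-1}$. The crux is to isolate this contribution, tracing it to the $2\pi i$ monodromy of $\log u$, and to check that its coefficient is exactly $2$, so that the two pieces combine to $2 g|_1 S(z) = 2 z^{-1} g(-1/z)$. The remaining manipulations, including re-expressing the exponentials as a slashed $q$-expansion, are identical to the cuspidal computation.
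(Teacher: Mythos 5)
Your reduction to $\gamma = S$ and your pole-sum computation are exactly the paper's intended argument (the paper offers no separate proof of this proposition, deferring to the analogy with \emph{Type 5} in Proposition~\ref{prop: eichler discont}), and your observation that the pole sum equals $-8z^{-1}\sum_{m\ge 1}d(m)e^{-2\pi i m/z} = 2g|_1 S(z) - 2z^{-1}$ is correct. The genuine gap is the mechanism you invoke to close the remaining $2z^{-1}$: it does not exist. By the paper's own definition (proof of Proposition~\ref{prop: eisenstein asymptotic}), $\varphi_0$ is \emph{only} the power-series part of the expansion; the logarithmic leading term belongs to $\varphi_0^*$ and is deliberately excluded. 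Consequently
\[
\mathcal{B}_1(\varphi_0)(u) \;=\; \frac{-8}{\pi i}\sum_{m\ge 1}\frac{d(m)\,u}{(2\pi i m)^2 - u^2},
\]
a convergent sum defining a globally meromorphic function whose only singularities are the simple poles at $2\pi i m$, $m \in \mathbb{Z}_{\neq 0}$; it is regular (indeed vanishing) at $u=0$. The Borel transform cannot ``inherit'' a branch point from a term that was never in the series being transformed, so crossing $\arg u = \pi/2$ produces the pole sum and nothing else: there is no origin contribution proportional to $z^{-1}$, and the $2\pi i$ monodromy of $\log u$ has nothing to act on.

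The honest conclusion of your (correct) computation is therefore that the discontinuity captures only the cuspidal part: $\mathrm{disc}_1^{\pi/2}(\mathcal{B}_1(\varphi_0)) = 2(g-1)|_1 S(z) = 2g|_1 S(z) - 2z^{-1}$, i.e. the proposition as printed is off by exactly $2z^{-1}$, the slash image of the constant term of $g$. In the paper's architecture this discrepancy is booked elsewhere, not in the Stokes discontinuity: the constant and logarithmic terms are resummed by the explicit contour integral in Proposition~\ref{prop: eisenstein med}, and in the proof of Theorem~\ref{thm: Main} that integral (together with elementary terms such as $z^{-1}$, which are analytic on $\mathbb{C}'$) simply joins the obstruction to modularity, so the holomorphic quantum modularity conclusion is unaffected. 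The correct repair is thus to restate the proposition for $g-1$ (equivalently, to add the correction $2z^{-1}$ by hand), rather than to search for the missing term in the Borel plane --- no choice of treatment of the singularity structure of $\mathcal{B}_1(\varphi_0)$ will produce it.
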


Now we return to the cases that require a different approach. 
\begin{prop} \label{prop: modular slash}
If $g$ is of Type 1 or 2 with $\kappa = k$ or Type 3 with $\kappa = 2-k$, then for $\gamma = \begin{psmallmatrix} a & b \\ c & d \end{psmallmatrix} \in \Gamma_0(N)$, we have that 
\[g|_{\kappa}\gamma(z) = \frac{1}{2\sqrt{z+d/c}} \int_{\Gamma}e^{-u/(z+d/c)}\hat{g}_{-d/c}^{odd}(u)\frac{du}{\sqrt{u}}.\]
\end{prop}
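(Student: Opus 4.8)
My plan is to start from the Gaussian integration identity proved in Proposition~\ref{prop: med resum}, which by the remark following Proposition~\ref{prop: eisenstein med} applies unchanged to Types 1--3. For $z+d/c\in i\mathbb{R}_+$ it decomposes $g$ as
\[
g(z) = \mathcal{S}^{\pi/2}_{med}(\varphi_{-d/c})(z) + \frac{1}{2\sqrt{z+d/c}}\int_\Gamma e^{-u/(z+d/c)}\widehat{g}^{odd}_{-d/c}(u)\frac{du}{\sqrt u},
\]
so the proposition is equivalent to identifying the second summand with $g|_\kappa\gamma$. Since $\widehat{g}^{odd}_{-d/c}(u)/\sqrt u$ has only integer powers of $u$ and so is single valued, the contour $\Gamma$ collapses to the \emph{difference} of the rays $\theta=\pi/2\pm\varepsilon$ (in contrast to the median-resummation term, whose $1/\sqrt u$ branch forces a \emph{sum}); thus this summand computes a discontinuity across $\theta=\pi/2$, exactly as in the Type 5 part of the proof of Proposition~\ref{prop: med resum}. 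I would prove the identity for $z+d/c\in i\mathbb{R}_+$ and extend it to $\mathbb{H}$ by holomorphicity.

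For Types 1 and 2 the argument is immediate. Since $g$ is a cusp form, $\Lambda(g,\zeta_c^{-d};\rho)$ is entire, and the poles of $\Gamma(\rho)$ force $L(g,\zeta_c^{-d};-n)=0$ for all $n\ge 0$; hence $\varphi_{-d/c}=0$, the median-resummation term vanishes, and $g$ equals the odd integral. As $g$ is modular, $g=g|_\kappa\gamma$, which gives the claim.

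The content lies in Type 3, where $g$ is the Eichler integral of a weight-$k$ cusp form $f$, so $\kappa=2-k$, $b_n=a_n n^{1-k}$, and $L(g,\zeta_c^{-d};\rho)=L(f,\zeta_c^{-d};\rho+k-1)$. Here I would evaluate the discontinuity by combining the templates of Section~\ref{subsec: asymptotics} and the Type 5 part of Proposition~\ref{prop: med resum}. First, the integer-weight functional equation of Section~\ref{subsec: modular} rewrites $L(g,\zeta_c^{-d};-n-1/2)=L(f,\zeta_c^{-d};k-n-3/2)$ in terms of $L(f,\zeta_c^a;n+3/2)$, introducing a factor $\Gamma(n+3/2)/\Gamma(k-n-3/2)$. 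The reflection formula turns $1/\Gamma(k-n-3/2)$ into $(-1)^{k+n}\Gamma(n+5/2-k)/\pi$; crucially, the sign $(-1)^n$ it supplies places the Borel singularities at $\omega_m=2\pi i m/c^2$ on the positive imaginary axis, i.e.\ on the summation ray $\theta=\pi/2$, so that, unlike in Type 5, the discontinuity does not vanish. After splitting off the finitely many leading terms (which, being a polynomial times $\sqrt u$, integrate to zero over $\Gamma$ by the closing-arc argument of Proposition~\ref{prop: med resum}), I would realize the tail as a shifted Borel transform $\mathcal{B}_{5/2-k}(\Phi)$ of an auxiliary series $\Phi$, exactly as in the Type 5 computation; summing the resulting geometric series exposes simple poles at the $\omega_m$, and Lemma~\ref{lem: borel laplace shifts} transfers the computation back to the contour $\Gamma$. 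The discontinuity then becomes a sum of residue-weighted exponentials $e^{-\omega_m/(z+d/c)}$, and the identity $\tfrac{1}{c^2(z+d/c)}=\tfrac ac-\tfrac{az+b}{cz+d}$ recognizes this as the Fourier expansion of $g|_{2-k}\gamma(z)$, as in the discontinuity calculation of Proposition~\ref{prop: eichler discont}.

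The main obstacle is this Type 3 computation, and within it the reflection-formula sign is the delicate crux: it is exactly the factor $(-1)^n$ that moves the Borel singularities onto the ray $\theta=\pi/2$ and makes the discontinuity nontrivial, and mishandling it would spuriously force $g=g|_{2-k}\gamma$. One must also treat the shifted Borel transform with care---the shift $5/2-k$ is admissible only on the tail $n\ge\ell$, which is why the leading terms are separated first---and, finally, track the $\tfrac12$ prefactor, the residue normalization, and the powers of $2\pi i$ and $c$ so that the residues assemble $g|_{2-k}\gamma(z)$ with overall coefficient exactly $1$. Keeping a consistent branch of $\sqrt u$ and orientation of $\Gamma$ throughout is the remaining bookkeeping.
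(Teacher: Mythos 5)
Your Type 3 argument is essentially the paper's own proof: the paper likewise rewrites $L(g,\zeta_c^{-d};-n-1/2)$ via the integer-weight functional equation and the reflection formula, defines an auxiliary series $\Phi$ with $\mathcal{B}_1(\Phi)=\widehat{g}^{odd}_{-d/c}(u)/\sqrt{u}$, splits off finitely many leading terms to secure convergence of the Dirichlet series, passes to the shifted transform $\mathcal{B}_{5/2-k}$, invokes Lemma \ref{lem: borel laplace shifts}, and evaluates the collapsed contour $\Gamma$ as a discontinuity whose residue-weighted exponentials reassemble $g|_{2-k}\gamma$ through the identity $\tfrac{1}{c^2(z+d/c)}=\tfrac{a}{c}-\tfrac{az+b}{cz+d}$; your emphasis on the reflection-formula sign placing the poles at $\omega_m=2\pi i m/c^2$ on the ray $\theta=\pi/2$ (rather than on the negative imaginary axis, as in Type 5) is exactly the right crux. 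Where you genuinely diverge is Types 1 and 2. The paper does \emph{not} invoke $g=g|_\kappa\gamma$ there; it runs the same direct computation as in Type 3 (indeed that is the case the paper writes out in full, deferring Type 3 as ``largely the same''): functional equation, auxiliary $\Phi$, shift to $\mathcal{B}_{k+1/2}$, geometric series, and residues, arriving at $(cz+d)^{-k}\sum_m a(m)e^{2\pi i m \gamma(z)}$ with no appeal to the modular transformation of $g$. Your shortcut---the decomposition plus vanishing of the even part gives $g=$ (odd integral), and then modularity of $g$ gives the claim---is logically valid as a proof of the stated identity, but it trades away what the proposition is for: in the proof of Theorem \ref{thm: Main}, Proposition \ref{prop: modular slash} is combined with the same decomposition precisely to \emph{conclude} $g=g|_\kappa\gamma$ for Types 1 and 2 from the resurgence data (the $L$-function functional equation) alone, and with your proof that step becomes circular, since you assume the transformation law you are meant to recover. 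So: correct, and the same route as the paper for Type 3; but for Types 1 and 2 you should run the residue computation you outlined for Type 3 (with $k$ in place of $2-k$ and shift $k+1/2$ in place of $5/2-k$) rather than appeal to modularity.
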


\begin{proof}	
In the cases where $g$ is of \emph{Type 1} or \emph{2}, since the asymptotic series $\varphi_{-d/c}(z)$ is zero, we have that $\mathcal{B}_{1/2}(\varphi_{-d/c}(z))=0$. So, it suffices to analyze 
 \[\frac{1}{\sqrt{u}}\widehat{g}_{-d/c}^{odd}(u) = -\frac{1}{\sqrt{\pi }} \sum_{n=0}^\infty \frac{ L(g, \zeta_c^{-d};-n-1/2)}{\Gamma(2n+2)}(8\pi i)^{n+1/2}u^{n}.
\]
 
We first consider $g$ of \emph{Type 1} or \emph{2}. We assume the notation and some of the techniques of Proposition \ref{prop: med resum}. 
We can write the above as 

 \begin{align*}\frac{1}{\sqrt{u}}\widehat{g}_{-d/c}^{odd}(u) &= - \sum_{n=0}^\infty \frac{ L(g, \zeta_c^{-d};-n-1/2)}{\Gamma(n+1)\Gamma(n+3/2)}(2\pi i)^{n+1/2}u^{n}\\
 &=C_{k, -d/c} \frac{i^{-k}}{\pi} \sum_{n=0}^\infty \Big\lbrack \left(\frac{c}{2\pi}\right)^{k+2n+1}\frac{\Gamma(n+k+1/2)}{\Gamma(n+1)}\\ & \quad \quad \quad \quad\quad \quad\quad \quad\cdot L(g, \zeta_c^{a}; n+k+1/2) (-1)^n(2\pi i)^{n+1/2}u^n\Big\rbrack,
\end{align*}
where $C_{k, -d/c}=1$ when $k$ is an integer and equals $\varepsilon_d^{2k}\left(\frac{c}{d}\right)$ when $k$ is a half-integer.

If we define
\[\Phi(z) =C_{k, -d/c}\frac{i^{-k}}{\pi} \sum_{n=0}^\infty \left(\frac{c}{2\pi}\right)^{k+2n+1} (-1)^n\Gamma(n+k+1/2)L(g, \zeta_c^{a}; n+k+1/2)(2\pi i)^{n+1/2}z^n,
\]
then we see that $\mathcal{B}_1(\Phi(z)) = \frac{1}{\sqrt{u}}\widehat{g}_{-d/c}^{odd}(u)$. Instead consider $\mathcal{B}_{k+1/2}(\Phi(z))$, which equals 
\[i^{-k} C_{k, -d/c}\sum_{n=0}^\infty \left(\frac{c}{2\pi}\right)^{k+2n+1} (-1)^nL(g, \zeta_c^{a}; n+k+1/2)(2\pi i)^{n+1/2}u^n.
\]
Writing $L(g, \zeta_c^{a}; n+k+1/2)$ as its Dirichlet series and using absolute convergence allows us to write this as 
\begin{align*}&\frac{-C_{k, -d/c}}{\pi i c^k} \sum_{m=1}^\infty \sum_{n=0}^\infty \frac{a(m)\zeta_c^{am}}{(2\pi i m/c^2)^{n+k+1/2}}u^n = \frac{C_{k, -d/c}}{\pi i c^k}\sum_{m=1}^\infty  \frac{a(m)\zeta_c^{am}}{u - 2\pi i m/c^2}\frac{1}{(2\pi i m/c^2)^{k-1/2}}.
\end{align*}
We can argue along the same lines as in Proposition \ref{prop: med resum} in order to move between the Borel-Laplace sum of $\Phi$ with shift 1 and with shift $k+1/2$. We have that
\begin{align*}
&\frac{1}{2\sqrt{z+d/c}} \int_{\Gamma}e^{-u/(z+d/c)}\widehat{g}_{-d/c}^{odd}(u)\frac{du}{\sqrt{u}}\\
&= \frac{1}{2\sqrt{z+d/c}} \int_{\Gamma}e^{-u/(z+d/c)}\mathcal{B}_1(\Phi)\;du\\
&= \frac{1}{2}\sqrt{z+d/c}\left( \mathcal{L}_{1}^{\pi/2-\varepsilon}(\mathcal{B}_1(\Phi)) - \mathcal{L}_{1}^{\pi/2+\varepsilon}(\mathcal{B}_1(\Phi))\right) \\
&= \frac{1}{2}\sqrt{z+d/c}\left( \mathcal{L}_{k+1/2}^{\pi/2-\varepsilon}(\mathcal{B}_{k+1/2}(\Phi)) - \mathcal{L}_{k+1/2}^{\pi/2+\varepsilon}(\mathcal{B}_{k+1/2}(\Phi))\right)\\
&= \sqrt{z+d/c}\cdot c^{-k} (z+d/c)^{-k-1/2} \sum_{m=1}^\infty a(m)\zeta_c^{am}\frac{(2\pi i m/c^2)^{k-1/2}}{(2\pi i m/c^2)^{k-1/2}} e^{2\pi i m/c^2(z+d/c)}\\
&= (cz+d)^{-k}\sum_{m=1}^\infty a(m) e^{2\pi i m\left(\frac{az+b}{cz+d}\right)}.
\end{align*}

In the case where $g$ is of \emph{Type 3}, the argument is largely the same. We assume that $g$ is the Eichler integral of $f$. Turning our attention back to 
\[\frac{1}{\sqrt{u}}\widehat{g}_{-d/c}^{odd}(u) = -\frac{1}{\sqrt{\pi }} \sum_{n=0}^\infty \frac{ L(g, \zeta_c^{-d};-n-1/2)}{\Gamma(2n+2)}(8\pi i)^{n+1/2}u^{n},
\]
we see that this equals 
\begin{align*}
	&-\frac{1}{\sqrt{\pi}}\sum_{n=0}^\infty \frac{L(f, \zeta_c^{-d};-n+k-3/2)}{\Gamma(2n+2)}(8\pi i)^{n+1/2} u^{n} \\
	&= \frac{- c^{k-2} \varepsilon_d^{2k} \left( \frac{c}{d}\right)}{\pi i}\sum_{n=0}^\infty  \frac{\Gamma(n-k+5/2)}{\Gamma(n+1)} \frac{L(f, \zeta_c^{a}; n+3/2)}{(2\pi i/c^2)^{n+5/2-k}} u^n.\\
\end{align*}
Again we define 
\[\Phi(z) := \frac{- c^{k-2} \varepsilon_d^{2k} \left( \frac{c}{d}\right)}{\pi i}\sum_{n=0}^\infty \Gamma(n-k+5/2) \frac{L(f, \zeta_c^{a}; n+3/2)}{(2\pi i/c^2)^{n+5/2-k}} z^n,
\]
so that $\mathcal{B}_1(\Phi(z)) =\frac{1}{\sqrt{u}}\widehat{g}_{-d/c}^{odd}(u)$. After splitting off finitely many terms to ensure convergence of the Dirichlet series representation of $L(f, \zeta_c^{a}; n+3/2)$, we take the Borel transform of the remaining terms of $\Phi$ with shift $5/2-k$. 

We proceed as before to realize the discontinuity after shifting as in Proposition \ref{prop: eichler discont}, deforming contours and relating Borel-Laplace sums as for \emph{Types 1} and \emph{2}. \end{proof}

\section{Proof of Theorem \ref{thm: Main}}\label{sec: main theorem}
We now prove our main theorem. 
We proceed by analyzing each case in order. For $g$ of \emph{Type 1} or \emph{2}, by Section \ref{subsec: asymptotics}, we know that the Borel--Laplace sum 
\[\frac{1}{\sqrt{z+d/c}} \int_{0}^{\infty}e^{-u/(z+d/c)} \mathcal{B}_{1/2}(\varphi_{-d/c}(z)) \frac{du}{\sqrt{\pi u}}\]
equals zero. Since in these cases $g$ is modular, this equals their obstruction to modularity. Moreover, we have that
\[g(z) =  \frac{1}{2\sqrt{z+d/c}}\int_{\Gamma}e^{-u/(z+d/c)} \hat{g}_{-d/c}(u) \;\frac{du}{\sqrt{ u}} = \frac{1}{2\sqrt{z+d/c}} \int_{\Gamma}e^{-u/(z+d/c)}\hat{g}_{-d/c}^{odd}(u)\frac{du}{\sqrt{u}}.
\]
Lastly, by Proposition \ref{prop: modular slash}, we have that
\[g|_{\kappa}\gamma(z) = \frac{1}{2\sqrt{z+d/c}} \int_{\Gamma}e^{-u/(z+d/c)}\hat{g}_{-d/c}^{odd}(u)\frac{du}{\sqrt{u}}.\]
Thus we have that $g(z) = g|_{\kappa}\gamma(z)$. 

When $g$ is of \emph{Type 3}, the argument is essentially the same, except the integral
\[\frac{1}{\sqrt{z+d/c}} \int_{0}^{\infty}e^{-u/(z+d/c)} \mathcal{B}_{1/2}(\varphi_{-d/c}(z)) \frac{du}{\sqrt{\pi u}}\] is not zero. We note that this is precisely the Borel resummation of $\varphi_{d/c}$ with shift $1/2$, and so it is equal to the Borel resummation of $\varphi_{d/c}$ with shift $1$. We will show explicitly that this equals the expected period polynomial in Proposition {prop: polynomial}. 

For $g$ of \emph{Type 4} or \emph{5}, we have already shown that $g(z)=\mathcal{S}_{med}^{\pi/2}(\varphi_{-d/c})(z)$ in Proposition \ref{prop: med resum}. From Section \ref{subsec: resurgence}, we have that
\[\mathcal{S}_{med}^{\pi/2}(\varphi_{-d/c})=\mathcal{S}^{0}(\varphi_{-d/c}) + \frac{1}{2}\text{disc}_{\kappa}^{\pi/2}(\mathcal{B}_{\kappa}(\varphi_{-d/c}))\]
for $z+d/c$ in the first quadrant. By Proposition \ref{prop: eichler discont}, we have that 
\[\frac{1}{2}\text{disc}_{\kappa}^{\pi/2}(\mathcal{B}_{\kappa}(\varphi_{-d/c})) = g|_{\kappa}\gamma(z)
\]
where $\gamma = \begin{psmallmatrix} a & b \\ c & d
 \end{psmallmatrix}$. This tells us that $g(z) - g|_\kappa\gamma(z) = \mathcal{S}^0(\varphi_{-d/c})$ in this region. By analytic continuation, we get the desired equality on $\mathbb{H}$. The analytic continuation of $\mathcal{S}^0(\varphi_{-d/c})$ to $\mathbb{C}_\gamma$ is guaranteed by the location of the Stokes rays for $\mathcal{B}_\kappa (\varphi_{-d/c})$.
The case where $g$ is of \emph{Type 6} is identical except for the addition of the integral
\[\frac{1}{2\sqrt{z}}\int_\Gamma e^{-u/z} \left( \frac{-4+2\log(8\pi i u)}{2\pi i u} \right)\frac{du}{\sqrt{\pi u}},\]
which contributes to the obstruction to modularity.\hfill \qedsymbol

\section{Proofs of Corollaries}\label{sec: corollaries}

Here we show that the presentations that are more familiar for these modular obstructions are readily obtained from the Borel-Laplace sum representation. 
We first show that the modular obstruction in Corollary \ref{cor: eichler} has the stated form. This is all that is left to prove after proving Theorem \ref{thm: Main}. 

\begin{prop}\label{prop: obstruction}
Suppose $c>0$ and $\tilde{f}$ is of Type 4. Then we have that
\begin{align*}\mathcal{L}^0_1(\mathcal{B}_1(\varphi_{-d/c})) &=
  \frac{(-2\pi i)^{k-1}}{\Gamma(k-1)} \int_{-d/c}^{i\infty} f(\tau)\left(\tau- z\right)^{k-2}\;d\tau, \quad \quad (z > -d/c).
\end{align*} 

\end{prop}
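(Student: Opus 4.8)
The plan is to compute both sides explicitly and match them. I would begin with the right-hand side, which is the more rigid object. Expanding the cusp form in its Fourier series $f(\tau)=\sum_{m\geq 1}a_m e^{2\pi i m\tau}$, parametrizing the contour by $\tau=-d/c+i\sigma$ with $\sigma\in(0,\infty)$, and interchanging the sum with the integral (justified by the exponential decay of $f$ toward $i\infty$ together with the hypothesis that the contour avoids the branch cut of $(\tau-z)^{k-2}$), each term reduces to an upper incomplete gamma function under the substitution $t=-2\pi i m(\tau-z)$. Writing $w:=z+d/c$, a short computation using $(-2\pi i)^{k-1}(-2\pi i m)^{1-k}=m^{1-k}$ yields
\[
\frac{(-2\pi i)^{k-1}}{\Gamma(k-1)}\int_{-d/c}^{i\infty} f(\tau)(\tau-z)^{k-2}\,d\tau
=\frac{1}{\Gamma(k-1)}\sum_{m\geq 1} a_m m^{1-k} e^{2\pi i m z}\,\Gamma\!\left(k-1,\,2\pi i m w\right).
\]

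For the left-hand side I would use the freedom granted by Lemma \ref{lem: borel laplace shifts} to evaluate $\mathcal{S}^0_1(\varphi_{-d/c})$ at the shift that linearizes the Borel transform, namely the closed form computed in Section \ref{subsec: asymptotics}: after splitting off the finitely many leading terms $\widehat{\varphi}^{\ell}_{-d/c}$, the Borel transform of $\varphi^{\ell}_{-d/c}$ is the rational function with simple poles at $\omega_m=2\pi i m/c^2$. Applying the direction-$0$ Laplace transform term by term turns each simple-pole contribution into an exponential integral, and hence into an incomplete gamma function, while the polynomial piece is summed directly; reassembling produces a closed form for $\mathcal{S}^0_1(\varphi_{-d/c})$.

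The step I expect to be the main obstacle is reconciling these two closed forms. The Borel transform naturally records the data of $f$ at the cusp $a/c=\gamma(i\infty)$ — its coefficients are $\zeta_c^{am}$ and its poles sit at $2\pi i m/c^2$ — whereas the incomplete-gamma expression above records the data at $-d/c=\gamma^{-1}(i\infty)$, with coefficients $\zeta_c^{-dm}$ and arguments $2\pi i m w$. The bridge between them is precisely the functional equation of $\Lambda(f,\zeta_c^{-d};\rho)$ from Section \ref{subsec: modular}, which is the analytic shadow of the modularity of $f$ under $\gamma$; carrying out the Gauss-sum bookkeeping it entails (and tracking the half-integral multiplier $\varepsilon_d^{2k}(\tfrac{c}{d})$, which collapses to the prefactor $\chi_{-4}(d)$ appearing in Corollary \ref{cor: eichler}) is the technical heart.

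Finally, I would note a shortcut that sidesteps a from-scratch evaluation of the Laplace integral and is consistent with the rest of the paper. Combining Propositions \ref{prop: med resum} and \ref{prop: eichler discont} already gives $\mathcal{S}^0(\varphi_{-d/c})=\mathcal{S}^{\pi/2}_{med}(\varphi_{-d/c})-\tfrac12\,\mathrm{disc}^{\pi/2}_\kappa(\mathcal{B}_\kappa(\varphi_{-d/c}))=g-g|_{2-k}\gamma$, so the left-hand side is exactly the modular obstruction. It then remains to identify $g-g|_{2-k}\gamma$ with the integral, which is the companion form of the classical Eichler period relation: starting from the convergent integral representation of $g=\tilde f$ on $\mathbb{H}$, applying $\gamma$, substituting $\tau\mapsto\gamma\tau$, and invoking $f|_k\gamma=f$, the two integrals telescope to $\int_{-d/c}^{i\infty}$, which the computation of the first paragraph then evaluates to the claimed incomplete-gamma series. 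This route isolates the genuinely new input as the single change of variables and makes the match with the first display transparent.
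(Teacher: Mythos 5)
Your computation of the right-hand side (the incomplete-gamma series) is correct modulo branch bookkeeping, and your setup of the left-hand side agrees with Section \ref{subsec: asymptotics}. But the argument stops exactly where the proposition lives. You produce two closed forms --- one carrying the data of the cusp $a/c$ (coefficients $\zeta_c^{am}$, incomplete-gamma-type functions of $2\pi i m/(c^2(z+d/c))$), the other carrying the data of $-d/c$ (coefficients $\zeta_c^{-dm}$, functions $e^{2\pi i m(z+d/c)}\Gamma(k-1,2\pi i m(z+d/c))$) --- and assert that the functional equation of $\Lambda(f,\zeta_c^{-d};\rho)$ plus ``Gauss-sum bookkeeping'' reconciles them. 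It cannot, at least not termwise: the passage from one expansion to the other mixes all indices $m$, exactly as in a theta inversion, so no manipulation of the $m$-th terms separately can succeed. In the paper's proof the functional equation enters through an integral mechanism, not coefficient bookkeeping: the sum is exchanged with the Laplace integral, the substitution $u=-2\pi i m(\tau-a/c)$ converts each term into an integral from $a/c$ to $i\infty$, and then the change of variables $\tau\mapsto\gamma\tau$ combined with $f|_k\gamma=f$ transports everything to $\int_{-d/c}^{i\infty}f(\tau)(\tau-z)^{k-2}\,d\tau$. That chain of substitutions (or an equivalent Mellin--Barnes contour-shift argument) is the analytic content of the proposition, and it is absent from your proposal.

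The shortcut in your final paragraph does not fill the gap. Its first half is legitimate --- identifying $\mathcal{L}^0_1(\mathcal{B}_1(\varphi_{-d/c}))$ with $g-g|_{2-k}\gamma$ via Theorem \ref{thm: Main} is exactly how the paper deduces Corollary \ref{cor: eichler} --- but its second half is circular. The ``classical Eichler period relation'' telescoping works in integer weight because $(\tau-z)^{k-2}$ is then a polynomial, so the three contours (from $z$ to $i\infty$, from $z$ to $-d/c$, from $-d/c$ to $i\infty$) close up by Cauchy's theorem. In half-integral weight the kernel has a branch point at $\tau=z\in\mathbb{H}$, a vertex of that triangle; the principal-branch cut $z-\mathbb{R}_{\geq 0}$ crosses the other contours, so the integrals do not concatenate. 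Moreover $(\gamma\tau-\gamma z)^{k-2}$ does not equal $(\tau-z)^{k-2}(c\tau+d)^{2-k}(cz+d)^{2-k}$ for principal branches without phase corrections that vary along the path, and for $k=1/2$ the integral representation of $\tilde{f}$ itself diverges at $\tau=z$. This failure is precisely why Lawrence--Zagier and Bringmann--Rolen worked with the non-holomorphic companion $f^*$ on the lower half-plane and obtained the integral formula only on $\mathbb{Q}$; extending it to $\tilde{f}$ on $\mathbb{H}$ is the point of this proposition, so invoking the telescoping assumes what is to be proven.
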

\begin{proof}
	We begin with the asymptotic expansion 
\begin{align*}
\tilde{f}(z) \sim \varphi_{-d/c}(z) = \varepsilon_d^{2k} \left( \frac{c}{d}\right)   \sum_{n=0}^\infty \frac{(-1)^{n+1-k}}{\Gamma(k-1-n) c^{2-k}} \sum_{m=1}^\infty \frac{a_f(m)\zeta_c^{am}m^{1-k}}{\left( \frac{2\pi i m}{c^2}\right)^{n+2-k}}( z+d/c)^{n}.
\end{align*}
Computing the Borel transform with shift $1$ with respect to $z+d/c$ gives
\begin{align*}\mathcal{B}_1(\varphi_{-d/c})(u) 
&= \frac{\varepsilon_d^{2k} \left( \frac{c}{d}\right) }{\Gamma(k-1)} \sum_{m=1}^\infty \frac{a_f(m)\zeta_c^{am}m^{1-k}(-1)^{1-k}}{c^{2-k}\left( \frac{2\pi i m}{c^2}\right)^{2-k}} \sum_{n=0}^\infty \frac{\Gamma(k-1)}{\Gamma(k-1-n) \Gamma(n+1)}\frac{(-1)^nu^{n}}{\left( \frac{2\pi i m}{c^2}\right)^{n}}
\end{align*}
The second sum is equal to $(1- \frac{u}{2\pi im/c^2})^{k-2}$ for $|u| < 2\pi m/c^2$, and \[(2\pi i m/c^2)^{k-2} \left(1- \frac{u}{2\pi im/c^2}\right)^{k-2} = (2\pi i m/c^2 - u)^{k-2}\] when $-\pi/2 < \arg(u) < \pi$. Then the above equals
\begin{align*}
\frac{\varepsilon_d^{2k} \left( \frac{c}{d}\right) }{\Gamma(k-1)}  \sum_{m=1}^\infty \frac{a_f(m)\zeta_c^{am}m^{1-k}(-1)^{1-k}}{c^{2-k}\left( \frac{2\pi i m}{c^2}- u \right)^{2-k}} .\end{align*}	
At this point, we compute $\mathcal{L}^0_1(u)(t)$. After a change of variables, this is given by an integral expression of the form
\begin{align*}
	\int_0^\infty e^{-u} \mathcal{B}_1(\varphi_{-d/c})(ut)\;du, 
\end{align*}
where $t>0$. We compute
\begin{align*}
	\int_0^\infty e^{-u} \mathcal{B}_1(\varphi_{-d/c})(ut)\;du &= \int_0^\infty e^{-u} \frac{\varepsilon_d^{2k} \left( \frac{c}{d}\right)  }{\Gamma(k-1)}  \sum_{m=1}^\infty \frac{a_f(m)\zeta_c^{am}m^{1-k}(-1)^{1-k}}{c^{2-k}\left( \frac{2\pi i m}{c^2}- u t\right)^{2-k}}\;du\\
	&=\frac{\varepsilon_d^{2k} \left( \frac{c}{d}\right)  (-1)^{1-k}}{c^{2-k}\Gamma(k-1)}\sum_{m=1}^\infty a_f(m) \zeta_c^{am}m^{1-k}\int_{0}^\infty \frac{e^{-u}}{\left( \frac{2\pi i m}{c^2}- u t\right)^{2-k} }\;du.
\end{align*}
We now switch the order of integration and summation, and after making the change of variables $u = -2\pi i m (\tau-a/c)$ in each integral, we obtain
\begin{align*}
&\frac{\varepsilon_d^{2k} \left( \frac{c}{d}\right)  (-1)^{1-k}}{c^{2-k}\Gamma(k-1)}\sum_{m=1}^\infty a_f(m) \zeta_c^{am}m^{1-k}(-2\pi i m) \int_{0}^{i\infty} \frac{e^{2\pi i m (\tau-a/c)}}{\left( \frac{2\pi i m}{c^2}+ 2\pi i m(\tau-a/c) t\right)^{2-k} }\;d\tau	\\
&= \frac{\varepsilon_d^{2k} \left( \frac{c}{d}\right)  (-1)^{1-k}}{c^{2-k}\Gamma(k-1)}\sum_{m=1}^\infty a_f(m) \frac{-2\pi i m^{2-k}}{(2\pi i m)^{2-k}} \int_{a/c}^{i\infty} \frac{e^{2\pi i m \tau}}{(\tau - a/c)^{2-k}\left(\frac{ 1}{c^2(\tau-a/c)}+ t\right)^{2-k} }\;d\tau.	
\end{align*}
Using that $\frac{1}{c^2(\tau- a/c)} = \frac{-d}{c} - \frac{d\tau -b}{-c\tau +a}$ and changing variables $t=z+d/c$, we obtain \begin{align*}
 \frac{\varepsilon_d^{2k} \left( \frac{c}{d}\right)}{\Gamma(k-1)}\sum_{m=1}^\infty \frac{ a_f(m)}{(2\pi i)^{1-k}} \int_{a/c}^{i\infty} \frac{(-c\tau + a)^{k-2} e^{2\pi i m \tau}}{\left(-\frac{d\tau -b}{-c\tau+a}+ z\right)^{2-k} }\;d\tau,	
\end{align*}
noting that collecting $-1/c$ with $\tau+a/c$ under the square root introduced a sign. Interchanging the order of integration and summation once again, the above becomes
\begin{align*}
\frac{(2\pi i)^{k-1}}{\Gamma(k-1)}\varepsilon_d^{2k} \left( \frac{c}{d}\right)  \int_{a/c}^{i\infty} \frac{(-c\tau + a)^{k-2} \sum_{m=1}^\infty a_f(m) e^{2\pi i m \tau}}{\left(-(\frac{d\tau -b}{-c\tau+a}- z\right))^{2-k} }\;d\tau,
\end{align*}
which under the change of variables $\tau \mapsto \gamma(\tau)$ gives
\begin{align*}
\frac{(2\pi i)^{k-1}}{\Gamma(k-1)} \int_{i\infty}^{\gamma^{-1}(i\infty)} \frac{\varepsilon_d^{2k} \left( \frac{c}{d}\right) (c\tau +d)^{2-k} \sum_{m=1}^\infty a_f(m) e^{2\pi i m \gamma(\tau)}}{\left(-(\tau - z\right))^{2-k} }\;\frac{d\tau}{(c\tau +d)^2}.
\end{align*}
Using the fact that $f$ is a modular form of weight $k$, as well as introducing an extra sign from pulling the $-1$ from under the square root, this becomes
\[
 \frac{(-2\pi i)^{k-1}}{\Gamma(k-1)} \int_{-d/c}^{i\infty} f(\tau)\left(\tau- z\right)^{k-2}\;d\tau. \qedhere
\]
\end{proof}

\

The same argument as above, taking into consideration differing signs at a few steps, allows us to also show the following.

\begin{prop}
Suppose $c<0$ and $\tilde{f}$ is of Type 4. Then we have that
\[\mathcal{L}^\pi_1(\mathcal{B}_1(\varphi_{-d/c})) =
 \frac{(-2\pi i)^{k-1}}{\Gamma(k-1)} \int_{-d/c}^{i\infty} f(\tau)\left(\tau- z\right)^{k-2}\;d\tau, \quad \quad (z > -d/c).
\] 
\end{prop}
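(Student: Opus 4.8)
The plan is to run the proof of Proposition~\ref{prop: obstruction} essentially verbatim, since the only inputs that change are the direction of the Laplace transform and a handful of signs arising from $c<0$ and from branches of non-integer powers. I would begin from the same asymptotic expansion $\varphi_{-d/c}$ recorded in Section~\ref{subsec: asymptotics} together with its Borel transform with shift $1$; these closed forms were derived without reference to the sign of $c$, so that
\[
\mathcal{B}_1(\varphi_{-d/c})(u) = \frac{\varepsilon_d^{2k}\left(\frac{c}{d}\right)(-1)^{1-k}}{c^{2-k}\,\Gamma(k-1)}\sum_{m=1}^\infty \frac{a_f(m)\zeta_c^{am}m^{1-k}}{\left(\frac{2\pi i m}{c^2}-u\right)^{2-k}}
\]
remains available, with branch points on the positive imaginary axis at $u = 2\pi i m/c^2$. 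The reason the Laplace transform must now be taken in direction $\pi$ is that, for $c<0$, the regime of $z$ in which the Borel--Laplace sum reconstructs $g$ lies on the opposite side of the Stokes ray $\arg u = \pi/2$; concretely, reflection across this ray sends $\theta=0$ to $\theta=\pi$, so direction $\pi$ is the exact mirror image of the direction used in the $c>0$ case.

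Next I would compute $\mathcal{L}^\pi_1(\mathcal{B}_1(\varphi_{-d/c}))$ by the same mechanism as in Proposition~\ref{prop: obstruction}: parametrize the ray by $u=e^{i\pi}s$ with $s\ge 0$, reduce to an integral of the form $\int_0^\infty e^{-u}\mathcal{B}_1(\varphi_{-d/c})(\cdot)\,du$, and then carry out the substitution analogous to $u=-2\pi i m(\tau-a/c)$. With the ray reflected, this substitution must be set up with the reversed orientation so that the resulting $\tau$-contour is once again the vertical ray $\int_{a/c}^{i\infty}$; here the hypothesis $c<0$ is precisely what makes the endpoints and orientation agree with those in the $c>0$ computation. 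From this point the algebraic identity $\frac{1}{c^2(\tau-a/c)} = \frac{-d}{c} - \frac{d\tau-b}{-c\tau+a}$, the substitution $\tau\mapsto\gamma(\tau)$, and the appeal to the weight-$k$ modularity of $f$ are all formally identical, and the contour $\int_{i\infty}^{\gamma^{-1}(i\infty)}$ reorients to $\int_{-d/c}^{i\infty}$ as before. Both sides, being analytic, then agree on the stated region by continuation.

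The main obstacle will be the branch-cut bookkeeping, which is genuinely delicate because $\tilde f$ is of \emph{Type 4} and hence $k\in\tfrac12+\mathbb{N}$. Several quantities in the argument --- the prefactor $c^{2-k}$, the factor $\left(\frac{2\pi i m}{c^2}-u\right)^{k-2}$, and the sign produced by ``pulling $-1$ out from under the square root'' at the final step --- are multivalued, and for $c<0$ the naive branch differs from the $c>0$ case by a sign or a power of $i$. In particular, the identity $\left(\frac{2\pi i m}{c^2}\right)^{k-2}\!\left(1-\frac{u}{2\pi i m/c^2}\right)^{k-2} = \left(\frac{2\pi i m}{c^2}-u\right)^{k-2}$, justified in Proposition~\ref{prop: obstruction} on the sector $-\pi/2<\arg u<\pi$, must be re-examined on a sector containing the ray $\arg u = \pi$, with a consistent branch fixed throughout. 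The crux is to verify that the sign changes from the three sources --- the reversed Laplace direction, the reversed orientation of the $\tau$-substitution, and the branch of each non-integer power evaluated at $c<0$ --- cancel in pairs, so that the total phase is unchanged and the output is exactly $\frac{(-2\pi i)^{k-1}}{\Gamma(k-1)}\int_{-d/c}^{i\infty} f(\tau)(\tau-z)^{k-2}\,d\tau$, identical to the $c>0$ case. I expect the cleanest way to confirm this is to track the net phase accumulated around a single branch point $u = 2\pi i m/c^2$ and to check that the replacement $(c,\theta)=(+,0)\mapsto(-,\pi)$ leaves the product of all sign factors invariant.
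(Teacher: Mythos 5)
Your proposal is correct and follows essentially the same route as the paper: the paper's own proof of this proposition is precisely the remark that the argument of Proposition \ref{prop: obstruction} goes through verbatim once the Laplace direction is flipped to $\pi$ and the resulting sign and branch changes are tracked. Your outline additionally pinpoints the genuinely delicate spots (the branch of $c^{2-k}$, the sector on which $\left(\tfrac{2\pi i m}{c^2}-u\right)^{k-2}$ is resolved, and the reversed orientation of the substitution $u \mapsto \tau$), which is exactly the bookkeeping the paper leaves implicit.
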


Corollary \ref{cor: eichler} follows from the above since by Theorem \ref{thm: Main}, the obstruction to modularity equals the Borel-Laplace sum $\mathcal{L}^0_1(\mathcal{B}_1(\varphi_{-d/c}))$ or $\mathcal{L}^\pi_1(\mathcal{B}_1(\varphi_{-d/c}))$ depending on the sign of $c$. 

We remark that the above arguments also work in the case of integer weight Eichler integrals. However, the following shows how we can see the coefficients of the period polynomial explicitly from our resurgence methods. 

\begin{prop} \label{prop: polynomial}
	Suppose $g$ is of Type 3 for $\text{SL}_2(\mathbb{Z})$.
	Then if $g$ is the Eichler integral of $f$, we have that
\[\mathcal{L}^0_1(\mathcal{B}_1(\varphi_{0}))(z) = \frac{(-2\pi i)^{k-1}}{\Gamma(k-1)} \sum_{n=0}^{k-2}{k-2\choose n} i^{n-1}\Lambda(f; n+1)z^n. \]
\end{prop}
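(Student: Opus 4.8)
The plan is to observe that in this integer-weight setting the Borel--Laplace sum does no real work, so that the entire content of the statement reduces to an algebraic identity between two presentations of a single polynomial.

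First I would recall the explicit form of $\varphi_0$ computed for \emph{Type 3} in Section~\ref{subsec: asymptotics}. Working at the cusp $0$ amounts to taking a representative $\gamma$ of $0 = \gamma^{-1}(i\infty)$ in $\mathrm{SL}_2(\mathbb{Z})$, for instance $S = \begin{psmallmatrix} 0 & -1 \\ 1 & 0\end{psmallmatrix}$, so that $c = 1$ and $d = 0$; since $c = 1$ forces $\zeta_c^a = e^{2\pi i a} = 1$, the choice of $a$ is immaterial and the twisted series $L(f, \zeta_c^a; \rho)$ collapses to the ordinary $L$-series $L(f; \rho)$. By the computation there, the poles of $\Gamma(k-1-n)^{-1}$ truncate the series, leaving the degree-$(k-2)$ polynomial
\[\varphi_0(z) = \sum_{n=0}^{k-2} i^{-k}\,(2\pi)^{k-2n-2}\,\frac{L(f; n+1)}{\Gamma(k-1-n)}\,(2\pi i)^n\, z^n.\]

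Second, since $\varphi_0$ is a polynomial it converges on all of $\mathbb{C}$, so by the remark following the definition of the Borel--Laplace sum (a convergent series is fixed by $\mathcal{L}^\theta_\kappa \mathcal{B}_\kappa$) we have $\mathcal{L}^0_1(\mathcal{B}_1(\varphi_0)) = \varphi_0$. Concretely, $\mathcal{B}_1$ sends $z^n$ to $u^n/\Gamma(n+1)$ and $\mathcal{L}^0_1$ returns $\Gamma(n+1)z^n$, so the composite fixes each monomial and hence $\varphi_0$ by linearity. Thus the resurgent machinery degenerates completely here, and it remains only to match $\varphi_0$ against the claimed right-hand side.

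Finally I would verify the match coefficientwise. Substituting the definition $\Lambda(f; n+1) = (2\pi)^{-(n+1)}\Gamma(n+1)L(f; n+1)$ together with $\binom{k-2}{n} = \Gamma(k-1)/(\Gamma(n+1)\Gamma(k-1-n))$ into the target, the $\Gamma(k-1)$ and $\Gamma(n+1)$ factors cancel, leaving the $z^n$-coefficient proportional to $L(f;n+1)/\Gamma(k-1-n)$ with the power $(2\pi)^{k-n-2}$, exactly matching $\varphi_0$. The only genuine bookkeeping is the phase: the target contributes $(-2\pi i)^{k-1}i^{n-1} = (-1)^{k-1}i^{k+n-2}(2\pi)^{k-1}$, while $\varphi_0$ carries $i^{n-k}$, and these agree because $(-1)^{k-1}i^{k+n-2} = i^{3k+n-4}$ differs from $i^{n-k}$ by $i^{4(k-1)} = 1$ for integral $k$. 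This phase cancellation is the one step requiring care; everything else is the formal collapse of the Borel--Laplace transform on a polynomial combined with the definition of the completed $L$-function.
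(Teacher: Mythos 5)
Your proposal is correct and follows essentially the same route as the paper: recall that $\varphi_0$ for \emph{Type 3} is the finite polynomial computed in Section~\ref{subsec: asymptotics}, note that a polynomial is fixed by $\mathcal{L}^0_1\mathcal{B}_1$, and then rearrange the coefficients via the identities $\Lambda(f;n+1)=(2\pi)^{-(n+1)}\Gamma(n+1)L(f;n+1)$ and $\binom{k-2}{n}=\Gamma(k-1)/(\Gamma(n+1)\Gamma(k-1-n))$, which is exactly the paper's step of inserting $\Gamma(k-1)\Gamma(n+1)/(\Gamma(k-1)\Gamma(n+1))$ and collecting terms. Your explicit verification of the phase $(-1)^{k-1}i^{k+n-2}=i^{n-k}$ just spells out the bookkeeping the paper leaves implicit.
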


\begin{proof} Our work in Section \ref{subsec: asymptotics} computed $\varphi_0(z)$ for $g$ of \emph{Type 3} to be 
\[\sum_{n=0}^{k-2}  \frac{L(f; n+1)i^{n-k}}{(2\pi)^{n+2-k}{\Gamma(k-1-n)} } z^n.
\]
Since this is just a polynomial, it is equal to its own Borel-Laplace sum, and so we can manipulate it directly. Multiplying by
\[\frac{\Gamma(k-1)\Gamma(n+1)}{\Gamma(k-1)\Gamma(n+1)}
\]
and collecting terms gives the desired result.  
\end{proof}

\

\section{Generalizations and Open Questions}\label{sec: questions}
We conclude with a selection of follow-up questions of interest to the authors. 

\begin{enumerate}
\item In the cases where $g$ is of \emph{Type 5} or \emph{6}, we expect the natural extensions of Theorem \ref{thm: Main} to hold for general level and spectral parameter. However, the computations become more tedious when these extra considerations are introduced. 
\item We remark that our methods also extend to vector-valued settings. This allows for new proofs of related results, such as Theorem 1.5 in \cite{bringmann-nazaroglu}. 
\item From our results, it seems like $L$-functions of degree two are the natural objects for the proposed framework of Fantini and Rella \cite{fantini-rella}. The $L$-functions having degree two ensures that their $\Gamma$-factors give rise to Gevrey-1 asymptotic series. A natural question is whether one can use these same methods to study generating functions for higher degree $L$-functions by using Borel and Laplace transforms of higher index. 
\item It is rather interesting to the authors that resurgence on its face cannot fully encapsulate the simplest examples of quantum modular forms, e.g. honest modular forms and integer weight Eichler integrals. Here, we propose one method to deform these series which admit convergent asymptotic expansions in order to leverage techniques from resurgence analysis. In \cite{fantini-rella}, the authors construct ``modular resurgent completions" for cusp forms which break their modular invariance. Are there another methods which are better suited to unify all examples which use resurgence in some way?
\end{enumerate}

\bibliography{EM_LR_QMFs-and-Resurgence.bib}{}

\begin{thebibliography}{10}

\bibitem{ajk}
Rostislav Akhmechet, Peter~K Johnson, and Vyacheslav Krushkal.
\newblock Lattice cohomology and {$q$}-series invariants of {3}-manifolds.
\newblock {\em Journal f{\"u}r die reine und angewandte Mathematik (Crelles Journal)}, 2023(796):269--299, 2023.

\bibitem{andersen-mistegard}
J{\o}rgen~Ellegaard Andersen and William~Elb{\ae}k Misteg{\aa}rd.
\newblock Resurgence analysis of quantum invariants of seifert fibered homology spheres.
\newblock {\em Journal of the London Mathematical Society}, 105(2):709--764, 2022.

\bibitem{andrews}
George~E Andrews.
\newblock Questions and conjectures in partition theory.
\newblock {\em The American mathematical monthly}, 93(9):708--711, 1986.

\bibitem{andrews-dyson-hickerson}
George~E Andrews, Freeman~J Dyson, and Dean Hickerson.
\newblock Partitions and indefinite quadratic forms.
\newblock {\em Inventiones mathematicae}, 91(3):391--407, 1988.

\bibitem{Balser}
Werner Balser.
\newblock {\em From divergent power series to analytic functions: theory and application of multisummable power series}.
\newblock Springer, 2006.

\bibitem{bettin-conrey}
Sandro Bettin and John Conrey.
\newblock Period functions and cotangent sums.
\newblock {\em Algebra \& Number Theory}, 7(1):215--242, 2013.

\bibitem{bringmann-nazaroglu}
Kathrin Bringmann and Caner Nazaroglu.
\newblock A framework for modular properties of false theta functions.
\newblock {\em Research in the Mathematical Sciences}, 6:1--23, 2019.

\bibitem{bringmann-rolen}
Kathrin Bringmann and Larry Rolen.
\newblock Half-integral weight {E}ichler integrals and quantum modular forms.
\newblock {\em Journal of Number Theory}, 161:240--254, 2016.

\bibitem{bump}
Daniel Bump.
\newblock {\em Automorphic forms and representations}.
\newblock Number~55. Cambridge university press, 1998.

\bibitem{3d-modularity}
Miranda~CN Cheng, Sungbong Chun, Francesca Ferrari, Sergei Gukov, and Sarah~M Harrison.
\newblock 3d modularity.
\newblock {\em Journal of High Energy Physics}, 2019(10):1--95, 2019.

\bibitem{cohen}
Henri Cohen.
\newblock {$q$}-identities for {M}aass waveforms.
\newblock {\em Inventiones mathematicae}, 91(3):409--422, 1988.

\bibitem{Costin}
Ovidiu Costin.
\newblock {\em Asymptotics and {B}orel summability}.
\newblock Chapman and Hall/CRC, 2008.

\bibitem{ecalle}
Jean {\'E}calle.
\newblock Les fonctions resurgentes, volumes {I--III}.
\newblock {\em Publ. Math. Orsay, France}, 1981.

\bibitem{eichler}
Martin Eichler.
\newblock Eine verallgemeinerung der abelschen integrale.
\newblock {\em Mathematische Zeitschrift}, 67(1):267--298, 1957.

\bibitem{fantini-rella}
Veronica Fantini and Claudia Rella.
\newblock Modular resurgent structures.
\newblock {\em arXiv preprint arXiv:2404.11550}, 2024.

\bibitem{folsom-cotangent-sums}
Amanda Folsom.
\newblock Twisted {E}isenstein series, cotangent-zeta sums, and quantum modular forms.
\newblock {\em Transactions of the London Mathematical Society}, 7(1):33--48, 2020.

\bibitem{garoufalidis}
Stavros Garoufalidis.
\newblock Chern-{S}imons theory, analytic continuation and arithmetic.
\newblock {\em Acta Math. Vietnam}, 33(3):335–--362, 2008.

\bibitem{g-z-23}
Stavros Garoufalidis and Don Zagier.
\newblock Knots and their related {$q$}-series.
\newblock {\em SIGMA. Symmetry, Integrability and Geometry: Methods and Applications}, 19:082, 2023.

\bibitem{g-z-24}
Stavros Garoufalidis and Don Zagier.
\newblock Knots, perturbative series and quantum modularity.
\newblock {\em SIGMA. Symmetry, Integrability and Geometry: Methods and Applications}, 20:055, 2024.

\bibitem{gukov-marino-putrov}
Sergei Gukov, Marcos Mari{\~n}o, and Pavel Putrov.
\newblock Resurgence in complex {C}hern-{S}imons theory.
\newblock {\em arXiv preprint arXiv:1605.07615}, 2016.

\bibitem{gppv}
Sergei Gukov, Du~Pei, Pavel Putrov, and Cumrun Vafa.
\newblock {BPS} spectra and {3}-manifold invariants.
\newblock {\em Journal of Knot Theory and Its Ramifications}, 29(02):2040003, 2020.

\bibitem{hanetal}
Li~Han, Yong Li, David Sauzin, and Shanzhong Sun.
\newblock Resurgence and partial theta series.
\newblock {\em Functional Analysis and Its Applications}, 57(3):248--265, 2023.

\bibitem{iwaniec}
Henryk Iwaniec.
\newblock {\em Spectral methods of automorphic forms}, volume~53.
\newblock American Mathematical Society, Revista Matem{\'a}tica Iberoamericana (RMI~…, 2021.

\bibitem{lawrence-zagier}
Ruth Lawrence and Don Zagier.
\newblock Modular forms and quantum invariants of {3}-manifolds.
\newblock {\em Asian Journal of Mathematics}, 3:93--108, 1999.

\bibitem{lewis-zagier}
John Lewis and Don Zagier.
\newblock Period functions for maass wave forms, {I}.
\newblock {\em Annals of Mathematics}, 153(1):191--258, 2001.

\bibitem{manin}
JI~Manin.
\newblock Periods of parabolic forms and {$p$}-adic {H}ecke series.
\newblock {\em Mathematics of the USSR-Sbornik}, 21(3):371, 1973.

\bibitem{Sauzin}
Claude Mitschi, David Sauzin, Mich{\`e}le Loday-Richaud, and {\'E}ric Delabaere.
\newblock {\em Divergent series, summability and resurgence}, volume 2153.
\newblock Springer, 2016.

\bibitem{shimura}
Goro Shimura.
\newblock Sur les int{\'e}grales attach{\'e}es aux formes automorphes.
\newblock {\em Journal of the Mathematical Society of Japan}, 11(4):291--311, 1959.

\bibitem{witten}
Edward Witten.
\newblock Quantum field theory and the {J}ones polynomial.
\newblock {\em Communications in Mathematical Physics}, 121(3):351--399, 1989.

\bibitem{zagier-appendix}
Don Zagier.
\newblock The mellin transform and related analytic techniques.
\newblock {\em Quantum field theory {I}: basics in mathematics and physics. A bridge between mathematicians and physicists}, pages 305--323, 2006.

\bibitem{zagier-quantum}
Don Zagier.
\newblock Quantum modular forms.
\newblock {\em Quanta of maths}, 11(659-675):5, 2010.

\bibitem{zagiertalk}
Don Zagier.
\newblock ``{H}olomorphic quantum modular forms''.
\newblock Lecture for \textit{Transfer operators in number theory and quantum chaos,} {H}ausdorff {C}enter for {M}athematics, 2020.

\end{thebibliography}
\bibliographystyle{plain}

\end{document}